\documentclass{amsart}

\usepackage[T1]{fontenc}
\usepackage{amsmath}
\usepackage{amsthm}
\usepackage{amsfonts}
\usepackage{amssymb}
\usepackage{enumerate}
\usepackage{mathrsfs}
\usepackage{graphicx}
\usepackage{hyperref}
\usepackage{enumitem}
\usepackage[labelfont=rm]{caption}
\usepackage{subcaption}
\usepackage{tikz}
\usepackage{defs}
\newif\iftikziii
\tikziiitrue

\iftikziii
\usetikzlibrary{matrix,calc,arrows.meta,knots,intersections,decorations.markings,cd}
\else
\usetikzlibrary{matrix,calc,knots,intersections,decorations.markings}
\fi

\def\wt{\widetilde}
\newcommand{\floor}[1]{\left \lfloor #1 \right \rfloor}

\DeclareMathOperator{\ord}{ord}

\title{Coverings of rational ruled normal surfaces}
\author[E. Artal]{Enrique Artal Bartolo}

\author[J.I. Cogolludo]{Jos{\'e} Ignacio Cogolludo-Agust{\'i}n}
\address{Departamento de Matem\'aticas, IUMA\\ 
Universidad de Zaragoza\\ 
C.~Pedro Cerbuna 12\\ 
50009 Zaragoza, Spain} 
\email{artal@unizar.es,jicogo@unizar.es} 

\author[J.~Mart\'{i}n-Morales]{Jorge Mart\'{i}n-Morales}
\address{Centro Universitario de la Defensa-IUMA \\
Academia General Militar \\
Ctra.~de Huesca s/n. 50090, Zaragoza, Spain}
\email{jorge@unizar.es}
\urladdr{\url{http://cud.unizar.es/martin}}

\thanks{Partially supported by
MTM2016-76868-C2-2-P}  

\subjclass[2010]{14C20,14B05,14C22,11P21,14F45}  

\keywords{Ruled surfaces, quotient singular points, cohomology}

\begin{document}

\begin{abstract}
In this work we use arithmetic, geometric, and combinatorial techniques to compute
the cohomology of Weil divisors of a special class of normal surfaces, 
the so-called rational ruled toric surfaces. These computations are used to study
the topology of cyclic coverings of such surfaces ramified along
$\mathbb{Q}$-normal crossing divisors.
\end{abstract}

\maketitle

\section*{Introduction}

The main purpose of this paper is the study of a special type of rational normal surfaces generalizing 
Hirzebruch surfaces, that carry a rational fibration. These are called rational ruled toric 
surfaces (see section~\ref{sec:RRTsurfaces} for a precise definition).

The first goal is to study the Picard group of these rational surfaces. Note that, while
the Picard group of smooth rational surfaces is finitely generated and torsion free,
this last property may be lost in some cases for toric ruled surfaces.

The second goal is to study the cohomology of the sheaves $\cO_S(D)$ for any Weil divisor $D$ of~$S$.

The final purpose is to apply this in order to calculate the first Betti number of cyclic covers of~$S$
with a prescribed ramification divisor~$D$.

This will be done in several steps:
\begin{itemize}
 \item 
Compute the Euler characteristic of $\cO_S(D)$ (Lemma~\ref{lemma:chigen}).
Using the Riemann-Roch formula for normal surfaces (see Blache~\cite{Blache-RiemannRoch}) one can
obtain the Euler characteristic of $\cO_S(D)$ in terms of intersection numbers and a correction 
term coming from the singular locus $\Sing(S)$.
 \item 
Compute $H^0(S,\cO_S(D))$ (sections~\ref{sec:RRTsurfaces-specialtype} and \ref{sec:RRTsurfaces-generaltype}). 
Using the contributions of Sakai~\cite{Sakai84} and the theory of weighted blow-ups, 
we can express these groups as subspaces of $0$-cohomology groups for line bundles in~$\bp^2$, 
defined by imposing weighted-multiplicity conditions at some points. These computations can be 
translated into the counting of integer points inside rational polygons, that is, one whose 
vertices are rational points (as in~\ref{sec:RRTsurfaces-specialtype}\eqref{eq:h0D1}).
 \item 
Calculate $H^2(S,\cO_S(D))$ (sections~\ref{sec:RRTsurfaces-specialtype} and \ref{sec:RRTsurfaces-generaltype}). 
Using Serre duality, $H^2(S,\cO_S(D))$ will be obtained as the dual 
of $H^0(S,\cO_S(K_S-D))$, where $K_S$ is a canonical divisor.
 \item
Calculate $b_1(\tilde S)$ of a cyclic cover of $S$ (section~\ref{sec:esnault}). 
This is done using Esnault-Viehweg type of formulas adapted to the quotient singularity case.
\end{itemize}

With these data, complete computations will be carried out in section~\ref{sec:RRTsurfaces-specialtype}
for biruled surfaces (admitting sections with vanishing self-intersection) 
and in section~\ref{sec:RRTsurfaces-generaltype} for uniruled surfaces.

The main result of our paper in this direction is Theorem~\ref{thm:main} for biruled surfaces, 
which states that given any divisor, at most one cohomology group does not vanish.
This is still true for uniruled surfaces as shown in Theorems~\ref{thm:main2} and~\ref{thm:h02} 
when the divisor is located in some special \emph{regions}, as it is well known to be the case for 
smooth ruled surfaces different from~$\bp^1\times\bp^1$.

As mentioned above, the main application of this computation is the study of Betti numbers of the
$n$-cyclic coverings of such a surface~$S$, which are ramified at $\QQ$-normal crossing
divisors of~$S$. This is done by extending classical results of Esnault-Viehweg to the quotient singularity case,
where this Betti number computation of the cyclic cover is reduced to the computation
of $1$-cohomology groups of some divisors associated with the ramification divisor $D$ and
to some divisor~$H$ such that $D$ and $dH$ are linearly equivalent. This information is finer than the 
one required in the smooth case. The main subtlety in the normal case comes from the fact that the 
codimension~$1$ ramification locus does not determine the codimension~$2$ ramification.

This is the first step of a more ambitious project, including the extension of
Esnault-Viehweg results and their application to more general ruled surfaces
having quotient singular points at two disjoint sections. The reason
behind these computations is the complete study of the monodromy
of L{\^e}-Yomdine singularities where these computations
are the key missing points.

The paper is organized as follows. In section~\ref{sec:prelim}, we recall basic tools in this theory such as 
weighted blow-ups, Blache-Sakai theory of normal surfaces, the behavior of Picard groups by birational 
morphisms, and the Riemann-Roch formula with its correction terms for surfaces with quotient singular points. 
Section~\ref{sec:RRTsurfaces} is devoted to analyzing the properties of toric ruled surfaces, including the
description of their Picard groups.
In sections~\ref{sec:RRTsurfaces-specialtype} and \ref{sec:RRTsurfaces-generaltype}, the cohomology of Weil 
divisors of toric biruled and uniruled surfaces is studied by using combinatorial and arithmetical tools.
The extension of Esnault-Viehweg results is presented in section~\ref{sec:esnault},
which ends with some illustrative examples.

During the preparation of this work, we have combined geometric and algebraic techniques
in a spirit close to one we believe is characteristic of Antonio Campillo's to whom we dedicate this work.

\section{Preliminaries}
\label{sec:prelim}

\subsection{Quotient singularities and weighted blow-ups}\label{sec:qs}

We will denote
by $\mu_d$ the group of $d$-roots of unity in~$\bc^*$.

\begin{dfn}\label{def:csp}
A complex algebraic surface $S$ has a \emph{cyclic singular point of type $\frac{1}{d}(a,b)$} at $0\in S$ if 
$(S,0)$ is locally is isomorphic to the germ $(\bc^2/\mu_d,0)$ obtained by the action 
$\zeta\cdot(x,y):=(\zeta^a x,\zeta^b y)$ on~$\bc^2$. 
\end{dfn}

From now on, $\frac{1}{d}(a,b)$ will denote the germ of
the surface at such a singular point.

\begin{obs}
Even though we do not require a priori any condition on the integers $d,a,b$ other than $d>0$,
there is no actual restriction in assuming that $u:=\gcd(d,a,b)$ is~$1$
since $\frac{1}{d}(a,b)=\frac{u}{d}(\frac{a}{u},\frac{b}{u})$. If $u=1$, we can
also assume that $v:=\gcd(a,b)$ is also~$1$ since $\frac{1}{d}(a,b)=\frac{1}{d}(\frac{a}{v},\frac{b}{v})$. 
Finally, note that if $w:=\gcd(d,a)$, then the map $(x,y)\mapsto(x,y^w)$ induces an isomorphism
$\frac{1}{d}(a,b)\to \frac{w}{d}(\frac{a}{w},b)$. Hence, we may also assume
$\gcd(d,a)=\gcd(d,b)=1$.
\end{obs}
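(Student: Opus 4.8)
The plan is to treat the three asserted reductions separately, in each case working with the image of $\mu_d$ inside $\mathrm{GL}_2(\bc)$ as the group of diagonal matrices $\mathrm{diag}(\zeta^a,\zeta^b)$. I expect the first two reductions to be genuine equalities of germs, arising from an equality of this image group as a subset of $\mathrm{GL}_2(\bc)$, whereas the third is a true isomorphism that changes the ambient coordinates and will require quotienting in stages.

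For the first equality I would isolate the ineffective part of the action: a root $\zeta\in\mu_d$ acts trivially exactly when $\zeta^a=\zeta^b=1$, i.e.\ when $\zeta\in\mu_d\cap\mu_{\gcd(a,b)}=\mu_u$ with $u=\gcd(d,a,b)$. Hence the action factors through $\mu_d/\mu_u\cong\mu_{d/u}$, and since the underlying quotient space is unchanged one gets $\frac1d(a,b)=\frac1{d/u}(\frac au,\frac bu)=\frac ud(\frac au,\frac bu)$ after tracking the effect of a generator. For the second equality I would use that $u=1$ forces $\gcd(v,d)=1$ for $v=\gcd(a,b)$, so that $\zeta\mapsto\zeta^v$ permutes $\mu_d$; substituting $\eta=\zeta^v$ shows $\{\mathrm{diag}(\zeta^a,\zeta^b):\zeta\in\mu_d\}=\{\mathrm{diag}(\eta^{a/v},\eta^{b/v}):\eta\in\mu_d\}$ as subgroups of $\mathrm{GL}_2(\bc)$, whence $\frac1d(a,b)=\frac1d(\frac av,\frac bv)$ on the nose.

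The third reduction is where the real content lies, and I expect it to be the main obstacle. Set $w=\gcd(d,a)$; from the previous step $\gcd(a,b)=1$, and since $w\mid a$ this yields $\gcd(w,b)=1$. The key observation is to examine the subgroup $\mu_w\subset\mu_d$: because $w\mid a$ it acts trivially on $x$, and because $\gcd(w,b)=1$ the map $\zeta\mapsto\zeta^b$ permutes $\mu_w$, so $\mu_w$ acts on $\bc^2$ purely by $(x,y)\mapsto(x,\eta y)$ with $\eta$ ranging over all of $\mu_w$. Consequently the quotient by $\mu_w$ is exactly $\bc^2_{(X,Y)}$ with $(X,Y)=(x,y^w)$, which identifies $(x,y)\mapsto(x,y^w)$ as the $\mu_w$-quotient map. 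Quotienting in stages, $\bc^2/\mu_d=(\bc^2/\mu_w)/(\mu_d/\mu_w)$, I would then compute the residual $\mu_d/\mu_w\cong\mu_{d/w}$ action on $(X,Y)$: a generator $\zeta$ sends $(X,Y)\mapsto(\zeta^aX,\zeta^{bw}Y)$, which in terms of the generator $\zeta^w$ of $\mu_{d/w}$ reads $((\zeta^w)^{a/w}X,(\zeta^w)^bY)$, giving precisely $\frac1{d/w}(\frac aw,b)=\frac wd(\frac aw,b)$. The point to verify carefully here is that this residual action is effective and that the generator is matched correctly, since this is the only step that is not a mere rewriting of the same group.

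Finally, for the concluding sentence I would apply the third reduction to arrange $\gcd(d,a)=1$, and then its mirror image, using $(x,y)\mapsto(x^{w'},y)$ with $w'=\gcd(d,b)$, to arrange $\gcd(d,b)=1$. I would check that the first reduction preserves $\gcd(a,b)=1$ (as $a/w$ divides $a$) so that the mirror step is legitimate, and that since $w'\mid b$ it cannot reintroduce a common factor of $d$ with the first weight, so that both coprimality conditions hold simultaneously.
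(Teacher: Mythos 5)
Your proposal is correct, and since the paper states this observation without any proof, your argument simply supplies the standard justification one would expect: identifying the ineffective subgroup $\mu_u$, reparametrizing the group by $\zeta\mapsto\zeta^v$ using $\gcd(v,d)=1$, and taking the quotient in stages through $\mu_w$ for the third reduction. All three steps check out, including the final bookkeeping that the two $\gcd$ reductions can be performed successively without destroying each other.
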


\begin{ntc}
\label{ntc:cyclic}
Sometimes it is useful to keep track of the divisors appearing in the definition of $\frac{1}{d}(a,b)$, 
and write $\frac{1}{d}(a_A,b_B)$, where $A,B$ are Weil divisors whose \emph{local equation in $\bc^2$} 
are $x=0$ and $y=0$, respectively as in Definition~\ref{def:csp}. 
\end{ntc}

\begin{ejm}
The first examples are weighted projective spaces. Let us start
with \emph{weighted} projective lines. Let $\omega:=(p,q)$ be coprime positive integers. We consider in 
$\bc^2\setminus\{0\}$ the equivalence relation given by
$(x,y)\sim(t^p x,t^q y)$, for $t\in\bc^*$. Its quotient
is the $\omega$-weighted projective line $\bp^1_\omega$ and its
elements are denoted by $[x:y]_\omega$. This space can be understood
using \emph{charts}. Let
\[
\begin{tikzcd}[column sep=1em,row sep=.1em]
\bc_x\arrow[rr,"\sigma_x"]&&\bp^1_\omega&&\bc_y\arrow[rr,"\sigma_y"]&&\bp^1_\omega\\
x\arrow[rr,mapsto]&&{[x:1]}_\omega&&y\arrow[rr,mapsto]&&{[1:y]}_\omega
\end{tikzcd}
\]
The images of these maps cover $\bp^1_\omega$ but they are not actually charts,
and they define charts if we consider the quotients $\bc_x/\mu_q$ and $\bc_y/\mu_p$.
Note that $\bc_x/\mu_q\to\bc$, $[x]\mapsto x^q$, and $\bc_y/\mu_p\to\bc$,
$[y]\mapsto y^p$, are isomorphisms, and  $\bp^1_\omega$ is actually $\bp^1$.
\end{ejm}

\subsubsection{Weighted blow-ups of smooth points}
\label{sec:blowup}

Let $P\in S$ be a smooth point, and let $A,B$ two divisors which are normal crossing
at~$P$; let us suppose that we have local coordinates such that $A:x=0$ and $B:y=0$.
Let $\omega:=(p,q)$ be coprime positive integers. The $(p,q)$-weighted blowing-up
of $P$ (relative to $A,B$) is a map $\rho:S_{P,\omega}\to S$, which is an isomorphism outside~$P$ and the model near~$P$ is as follows:
\begin{equation}\label{eq:pq}
\begin{tikzcd}[column sep=.5em,row sep=.1em]
\hat{\bc}^2_\omega:=\{((x,y),[u:v])\in\bc^ 2\times\bp^1_\omega\mid x^q v^p=y^p u^q
\}\arrow[rrrr,"\rho"]&&&&\bc^2\\
((x,y),[u:v])\arrow[rrrr,mapsto]&&&&(x,y)
\end{tikzcd}
\end{equation}
Let $E:=\rho^{-1}(0)$; it is abstractly isomorphic to $\bp^1_\omega\cong\bp^1$. Note that $S_{P,\omega}$ may have
singular points if either $p$ or~$q$ are greater than~$1$. Let us study for that $\hat{\bc}^2_\omega$.
It will be covered by two maps
\begin{equation}\label{eq:cartaspq}
\begin{tikzcd}[column sep=1em,row sep=.1em]
\bc^2_x\arrow[rr,"\tau_x"]&&\hat\bc^2_\omega&&\bc^2_y\arrow[rr,"\tau_y"]&&\hat{\bc}^2_\omega\\
(x,y)\arrow[rr,mapsto]&&
((x y^p,y^q),{[x:1]}_\omega)
&&(x,y)\arrow[rr,mapsto]&&((x^p,x^q y),{[1:y]}_\omega)
\end{tikzcd}
\end{equation}
If we want these maps to be isomorphisms onto the image, we need to replace $\bc^2_x$ by $\frac{1}{q}(p_A,-1_E)$
and $\bc^2_y$ by $\frac{1}{p}(-1_E,q_B)$. Using the properties shown in \cite{amo:jos}, we have
that $E^2=-\frac{1}{pq}$. Let us keep the same notation for the strict transforms. Then,
\[
A\cdot E=\frac{1}{q},\quad B\cdot E=\frac{1}{p},\quad (A\cdot A)_{S_{P,\omega}}=(A\cdot A)_S-\frac{p}{q}
,\quad (B\cdot B)_{S_{P,\omega}}=(B\cdot B)_S-\frac{q}{p}.
\]

\subsubsection{Weighted blow-ups of singular points: special case}
\label{sec:blowup-pq}

Quotient singular points do also admit weighted blow-ups. We present the two examples which will be used later.

Let $P\in S$ be a singular point of type $\frac{1}{d}(p_A,q_B)$, and let $A,B$ two divisors which are $\bq$-normal crossing
at~$P$, with \emph{local coordinates} such that $A:x=0$ and $B:y=0$, with $\gcd(p,q)=\gcd(d,p)=\gcd(d,q)=1$. Denote $\omega:=(p,q)$; the $(p,q)$-weighted blowing-up
of $P$ (relative to $A,B$) is a map $\rho:S_{P,\omega}\to S$, which is an isomorphism outside~$P$ and the model near~$P$ is a quotient of~\eqref{eq:pq}. We cover this space
with two maps as in~\eqref{eq:cartaspq}. Let us consider the origin of~$\bc^2_x$; besides the action of
$\mu_q$, the following commutative diagram holds:
\[
\begin{tikzcd}[column sep=3em,ampersand replacement=\&]
(x,y)\arrow[rrr,mapsto]\arrow[dddd,mapsto]\&\&\&(x y^p,y^q)\arrow[dddd,mapsto]\\[-20pt]
\&\bc^2_x\arrow[r,"\tau_x"]\arrow[dd]\&\bc^ 2\arrow[dd]\&\\[-10mm]
\&\&\&\&[-2cm]{
\Longrightarrow \begin{pmatrix}[c|cc]
q&p&-1\\
d&0&1
\end{pmatrix}
\cong \frac{1}{q}(p,-d)
}
\\[-10mm]
\&\bc^2_x\arrow[r,"\tau_x"]\&\bc^2\&\\[-20pt]
( x,\zeta_{d} y)\arrow[rrr,mapsto]\&\&\&
(x (\zeta_d y)^p,(\zeta_d y)^q)
\end{tikzcd}
\]
where $
\left(\begin{smallarray}{c|cc}
q&p&-1\\
d&0&1
\end{smallarray}
\right)
$
represents the quotient of $\bc^2$ by the group $\mu_q\times\mu_d$ indicated
by the right hand side of the matrix as in Definition~\ref{def:csp}.
Hence, in $E:=\rho^{-1}(0)\cong\bp^1$
we have two points of type $\frac{1}{q}(p_A,-d_E)$
and $\frac{1}{p}(-d_E,q_B)$. The following holds:
\begin{equation}\label{eq:intersections_pq}
E^2=-\frac{d}{pq}\quad A\cdot E=\frac{1}{q},\quad B\cdot E=\frac{1}{p},\quad 
(A\cdot A)_{S_{P,\omega}}=(A\cdot A)_S-\frac{p}{d q}
,\quad (B\cdot B)_{S_{P,\omega}}=(B\cdot B)_S-\frac{q}{d p}.
\end{equation}

\begin{ejm}\label{ejm:weighted_Cremona}
Let $\omega:=(p,q,r)$ be a triple of
pairwise coprime positive integer numbers. Let $\bp^2_\omega$ be the projective plane 
associated to~$\omega$, i.e.. the quotient of $\bc^3\setminus\{0\}$ by the action
$t\cdot(x,y,z):=(t^p x,t^q y, t^r z)$, whose elements will be denoted by
$[x:y:z]_\omega$. Let $X\subset\bp^2_\omega$ be the curve defined by $x=0$, and
define in the same way the curves $Y,Z$. 

Consider the map $\rho:\bp^2\to\bp^2_\omega$ given by $\rho([x:y:z]):=[x^p:y^q:z^r]_\omega$
(of degree~$pqr$).
Note that $X,Y,Z$ are Weil divisors and
\[
X\cdot Y=\frac{1}{r},\quad Y\cdot Z=\frac{1}{p},\quad X\cdot Z=\frac{1}{q}
,\quad X^2=\frac{p}{q r},\quad Y^2=\frac{q}{p r},\quad X^2=\frac{r}{p q}.
\]
Let us give another way to construct this weighted projective plane.
\begin{figure}[ht]
\begin{center}
\begin{tikzpicture}[scale=.6,
vertice/.style={draw,circle,fill,minimum size=0.2cm,inner 
sep=0}
]
\coordinate (R) at (0,2);
\coordinate  (P) at (-2,0) ;
\coordinate (Q) at (2,0);

\draw ($1.1*(P)-.1*(Q)$) -- node[below] {$Z$} ($1.1*(Q)-.1*(P)$);
\draw ($1.1*(P)-.1*(R)$) -- node[above=5pt] {$X$} ($1.1*(R)-.1*(P)$);
\draw ($1.1*(R)-.1*(Q)$) --  node[above=5pt, pos=.6] {$Y$} ($1.1*(Q)-.1*(R)$);
\node[above=3pt] at (R) {$(1,1)$};
\node[below left=3pt] at (P) {$(p,\alpha)$};
\node[below right=3pt] at (Q) {$(q,\beta)$};

\begin{scope}[xshift=8cm,yshift=1cm]
\coordinate (P1) at (-1,2);
\coordinate (P2) at (1,2);
\coordinate  (P3) at (-2,0) ;
\coordinate (P4) at (2,0);
\coordinate (P5) at (-1,-2);
\coordinate (P6) at (1,-2);
\draw ($1.1*(P1)-.1*(P2)$) -- node[below] {${E_Z}_{(-1)}$} node [below] {} ($1.1*(P2)-.1*(P1)$);
\draw ($1.1*(P2)-.1*(P4)$) -- node[right] {$Y_{(-\frac{q}{\beta})}$} node[left] {} ($1.1*(P4)-.1*(P2)$);
\draw ($1.1*(P6)-.1*(P4)$) -- node[right] {${E_X}_{(-\frac{1}{q\beta})}$} node[left] {} ($1.1*(P4)-.1*(P6)$);
\draw ($1.1*(P6)-.1*(P5)$) -- node[above] {$Z_{(-\frac{r}{pq})}$} node[above] {}  ($1.1*(P5)-.1*(P6)$);
\draw ($1.1*(P3)-.1*(P5)$) -- node[left] {${E_Y}_{(-\frac{1}{p\alpha})}$} node[right] {} ($1.1*(P5)-.1*(P3)$);
\draw ($1.1*(P3)-.1*(P1)$) -- node[left] {$X_{(-\frac{p}{\alpha})}$} node[right] {} ($1.1*(P1)-.1*(P3)$);
\node[vertice] at (P6) {};
\node[below right] at (P6) {$\frac{1}{q}(-1,\beta)$};
\node[vertice] at (P5) {};
\node[below left] at (P5) {$\frac{1}{p}(-1,\alpha)$};
\node[vertice] at (P3) {};
\node[left] at (P3) {$\frac{1}{\alpha}(-1,p)$};
\node[vertice] at (P4) {};
\node[right] at (P4) {$\frac{1}{\beta}(-1,q)$};
\end{scope}

\begin{scope}[xshift=16cm,yscale=-1,yshift=-1cm]
\coordinate (R) at (0,2);
\coordinate  (P) at (-2,0) ;
\coordinate (Q) at (2,0);

\draw ($1.1*(P)-.1*(Q)$) -- node[above] {$E_Z$} ($1.1*(Q)-.1*(P)$);
\draw ($1.1*(P)-.1*(R)$) -- node[left] {$E_Y$} ($1.1*(R)-.1*(P)$);
\draw ($1.1*(R)-.1*(Q)$) --  node[right] {$E_X$} ($1.1*(Q)-.1*(R)$);
\node[vertice] at (R) {};
\node[below=3pt] at (R) {$\frac{1}{r}(p,q)$};
\node[vertice] at (Q) {};
\node[above] at (Q) {$\frac{1}{q}(1,\beta)$};
\node[vertice] at (P) {};
\node[above] at (P) {$\frac{1}{p}(1,\alpha)$};
\end{scope}
\end{tikzpicture}
\caption{Birational map from $\bp^2$ to $\bp^2_\omega$}
\label{fig:birat_omega}
\end{center}
\end{figure}
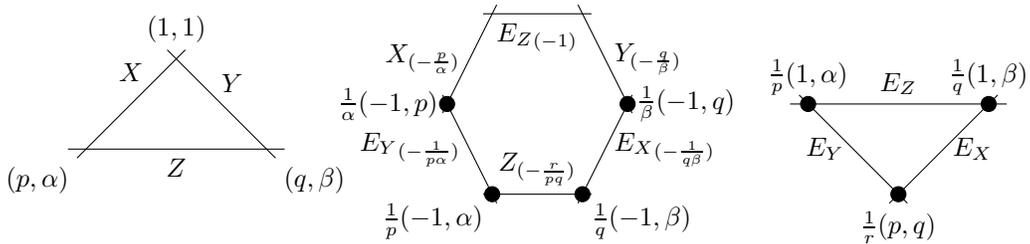
Let us fix $\alpha,\beta\in\ZZ_{>0}$ such that $p q+r=p\beta+q\alpha$. Such integers
exist because of their coprimality and the properties of the semigroup generated by $p,q$.
Let us consider the weighted blow-ups of $\bp^2$ with the weights of the left part of 
Figure~\ref{fig:birat_omega}. Let $S$ be the surface in the middle which, has four
singular points; the self-intersection of the strict transform of~$Z$ is given by
the choice of $\alpha,\beta$. Note that this surface can be obtained from $\bp^2_\omega$
using the weighted blow-ups with weights indicated in the right part of Figure~\ref{fig:birat_omega}.
Note that $\frac{1}{p}(1,\alpha)=\frac{1}{p}(q,r)$ and $\frac{1}{q}(1,\beta)=\frac{1}{q}(p,r)$.
\end{ejm}

\subsection{The projection formula}
This section will mainly follow the results presented in~\cite{Sakai84} for normal surfaces, where proofs can be found.
Recall that $\Div(S)$ is the free abelian group with basis the irreducible
projective curves in~$S$, i.e., the irreducible Weil divisors (in the smooth case
the concepts of Weil and Cartier divisors coincide).
Given a divisor $D\in \Div(S)$ on a projective normal surface $S$ and $S_0:=S\setminus \Sing(S)$,
where $i:S_0\hookrightarrow S$ is the inclusion, the divisor $D$ defines a 
coherent reflexive sheaf of rank one $\cO_S(D)=i_*(\cO_{S_0}(D|_{S_0}))$ which can be given as $\pi_*(\cO_{\bar S}(\bar D))$,
where $\bar D$ is the strict transform of $D$ by a resolution~$\pi$; let $\cE$ be the \emph{exceptional locus}, that is, the collection 
of all the irreducible exceptional divisors of~$\pi$. By definition, for any $\QQ$-divisor
$D=\sum_{i=1}^n a_i D_i\in \Div_\QQ(S)=\QQ\otimes_\ZZ \Div(S)$ we consider $\cO_S(D):=\cO_S(\lfloor D\rfloor)$, 
where $\lfloor D\rfloor=\sum \lfloor a_i\rfloor D_i$. The definition of $\pi^*D$ is given as
$$
\pi^*D=\bar D+\sum_{E\in\cE} m_E E
$$
where $\bar{D}$ is the strict transform of~$D$ and $m_E\in\QQ$ are rational numbers satisfying 
$\bar D\cdot E'+\sum_{E'\in\cE} m_{E} E\cdot E'=0$. Since the intersection matrix $M_\cE=(E\cdot E')_{E,E'\in\cE}$ 
is negative definite, the $m_E$'s are unique. One has the following generalization of the projection formula.

\begin{prop}[{Projection formula~\cite[Thm.~2.1]{Sakai84}}]
\label{prop:projformula}
If $D\in \Div_\QQ(S)$ and $\pi:\bar S\to S$ a resolution of singularities of $S$, then 
$$
\pi_*(\cO_{\bar S}(\pi^* D))\cong \cO_S(D).
$$
\end{prop}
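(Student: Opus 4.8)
The plan is to reduce the global isomorphism of sheaves to a purely local statement away from and near the singular locus, and then to verify the local statement at each singular point by a direct computation of sections. The key observation is that both sides, $\pi_*(\cO_{\bar S}(\pi^* D))$ and $\cO_S(D)$, are coherent reflexive sheaves of rank one on the normal surface~$S$; since $S$ is normal and the two sheaves agree on $S_0 = S \setminus \Sing(S)$, it suffices to compare their sections over small neighborhoods of each point of $\Sing(S)$, because a reflexive sheaf on a normal surface is determined by its restriction to the complement of a finite set of points (codimension $2$). This is the standard mechanism behind all statements of Hartogs type, and it is what makes the problem tractable.

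First I would establish the easy inclusion. On $S_0$, the morphism $\pi$ is an isomorphism and $\pi^* D$ restricts to $D$, so there is a canonical isomorphism $\pi_*(\cO_{\bar S}(\pi^* D))|_{S_0} \cong \cO_S(D)|_{S_0}$. Because $\cO_S(D) = i_*(\cO_{S_0}(D|_{S_0}))$ by its very definition, the universal property of pushforward along $i$ yields a natural map $\pi_*(\cO_{\bar S}(\pi^* D)) \to \cO_S(D)$ extending this isomorphism. The content of the proposition is that this map is an isomorphism, which by reflexivity and normality reduces to checking it on stalks at the singular points.

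The heart of the argument is the local computation at a singular point $P \in \Sing(S)$. I would work in a neighborhood~$U$ of~$P$ and describe, on the resolution, a local rational function~$f$ on~$S$. The condition that $f \in \Gamma(U, \cO_S(D))$ is, by definition on $S_0$, the divisorial inequality $\operatorname{div}(f) + D \geq 0$ away from~$P$; the condition that $f \in \Gamma(\pi^{-1}(U), \cO_{\bar S}(\pi^* D))$ is $\operatorname{div}_{\bar S}(\pi^* f) + \pi^* D \geq 0$ over the whole fiber, including the exceptional divisors~$E \in \cE$. The crucial point is that the correction coefficients $m_E$ were chosen precisely so that $\pi^* D \cdot E' = 0$ for all exceptional~$E'$; combined with the fact that $\operatorname{div}_{\bar S}(\pi^* f) = \pi^*(\operatorname{div}(f))$ has zero intersection with every~$E'$ as well (it is a pullback of a principal divisor), one sees that the order of vanishing along each~$E$ of the left-hand side is automatically nonnegative whenever the inequality holds on the strict transforms. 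Thus no new condition at~$P$ is imposed by the exceptional divisors, and the two spaces of local sections coincide.

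The main obstacle I expect is the bookkeeping of the fractional coefficients $m_E$ and the verification that $\lfloor \pi^* D \rfloor$ does not discard any needed section. Because $D$ is only a $\QQ$-divisor and $\cO_S(D)$ is defined via $\lfloor D \rfloor$, one must check that rounding down the rational multiplicities $m_E$ on the exceptional divisors never forces a section of $\cO_S(\lfloor D \rfloor)$ to acquire a pole along some~$E$; equivalently, that $\lfloor \pi^* D \rfloor$ and $\pi^* \lfloor D \rfloor$ have the same pushforward. This is where the quotient-singularity structure of~$S$ and the explicit intersection data from Section~\ref{sec:qs} enter: one uses the negative-definiteness of $M_\cE$ to control the signs of the~$m_E$ and to confirm that the floor operation is harmless at the level of pushforward. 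Once this rounding issue is settled, the equality of stalks, and hence the global isomorphism, follows; the detailed verification can be cited from~\cite{Sakai84}.
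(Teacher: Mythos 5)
The paper does not prove this proposition at all: it is imported verbatim from Sakai, with the citation \cite[Thm.~2.1]{Sakai84} serving as the proof, so there is no internal argument to measure yours against. Judged on its own, your sketch is the standard proof and is essentially sound: the reduction to a comparison of local sections (both sides being subspaces of $\bc(S)$ cut out by divisorial inequalities), the observation that the exceptional conditions are automatic, and the justification of the latter via the fact that $-M_\cE^{-1}$ has non-negative entries (so pull-backs of effective divisors are effective) are exactly the right ingredients. It is worth noting that this is precisely the mechanism the paper itself deploys one statement later, in the proof of Proposition~\ref{prop:H0}: there the authors \emph{assume} the projection formula and then show the conditions $\ord_E((\pi^*h_1)+\pi^*D)\geq 0$ are implied by $(h_1)+D\geq 0$ using the same negative-definiteness lemma; your argument runs the same lemma to establish the projection formula itself, which is coherent. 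One point you could tighten: the ``rounding'' issue you isolate as the main obstacle is resolved not by comparing $\floor{\pi^*D}$ with $\pi^*\floor{D}$, but simply by the remark that a principal divisor $(h)$ is integral, so $(h)+\floor{Z}\geq 0$ is equivalent to $(h)+Z\geq 0$ for any $\QQ$-divisor $Z$; this is exactly how the paper dispatches the same point in Proposition~\ref{prop:H0}, and once it is in place no further control of the individual $m_E$ beyond their non-negativity is needed.
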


\begin{prop}\label{prop:H0}
Let $S$ be a normal surface and $D \in \Div_\QQ(S)$. Then, the cohomology
group $H^ 0(S, \cO_S(D))$ can be identified with
$\{ h \in \bc(S) \mid (h) + D \geq 0 \}$.
\end{prop}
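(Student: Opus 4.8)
The plan is to identify $H^0(S,\cO_S(D))$ with global sections of the reflexive sheaf and then unwind the definition of $\cO_S(D)$ on the smooth locus. First I would recall that, since $S$ is normal and $S_0 = S\setminus\Sing(S)$ has complement of codimension $2$, the inclusion $i:S_0\hookrightarrow S$ satisfies $\cO_S(D)=i_*(\cO_{S_0}(D|_{S_0}))$, so that $H^0(S,\cO_S(D))=H^0(S_0,\cO_{S_0}(D|_{S_0}))$; passing to the smooth locus is harmless precisely because reflexive sheaves are determined by their restriction off a codimension-$2$ set. On a smooth variety a global section of $\cO(D')$ is by definition a rational function $h$ on $S_0$ whose divisor satisfies $(h)+D'\geq 0$, and rational functions on $S_0$ agree with $\bc(S)$ since $S$ is irreducible.

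Next I would make the correspondence explicit in both directions. Given $h\in\bc(S)$ with $(h)+D\geq 0$, the function $h$ defines a section of $\cO_{S_0}(D|_{S_0})$ because the condition $(h)+D\geq 0$ is a statement about orders of vanishing along prime divisors, all of which meet $S_0$; restricting to $S_0$ and pushing forward by $i_*$ yields an element of $H^0(S,\cO_S(D))$. Conversely, any global section of $\cO_S(D)$ restricts to a section over $S_0$, hence to a rational function $h\in\bc(S_0)=\bc(S)$ satisfying the inequality $(h)+D|_{S_0}\geq 0$ on $S_0$; since $D$ and $h$ have no poles or zeros outside $S_0$ except along the (codimension $\geq 1$) prime divisors that all intersect $S_0$, the inequality $(h)+D\geq 0$ holds on all of $S$.

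The one point requiring care is the comparison of the order functions $\ord_{D_i}$ computed on $S$ versus on $S_0$, and the role of $\lfloor D\rfloor$ in the convention $\cO_S(D)=\cO_S(\lfloor D\rfloor)$ for $\QQ$-divisors. Since every prime Weil divisor $D_i$ is the closure of a prime divisor on $S_0$ and the valuation $\ord_{D_i}$ depends only on the generic point of $D_i$, which lies in $S_0$, the coefficient-wise inequality $(h)+D\geq 0$ is genuinely equivalent whether tested on $S$ or on $S_0$; moreover for $h\in\bc(S)$ the integrality of $\ord_{D_i}((h))$ means $(h)+D\geq 0$ is equivalent to $(h)+\lfloor D\rfloor\geq 0$, reconciling the inequality with the floor convention.

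The main obstacle I would expect is purely bookkeeping: justifying that no contribution is lost at the singular points, i.e.\ that the pushforward $i_*$ does not enlarge or shrink the space of sections. This reduces to Hartogs-type extension for the structure sheaf of a normal surface across a codimension-$2$ set, which is exactly the reflexivity built into the definition $\cO_S(D)=i_*(\cO_{S_0}(D|_{S_0}))$, so once that definition is invoked the identification is formal.
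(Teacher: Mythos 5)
Your proof is correct, but it takes a genuinely different route from the paper's. You work directly from the definition $\cO_S(D)=i_*(\cO_{S_0}(D|_{S_0}))$: since $(i_*\mathcal{F})(S)=\mathcal{F}(S_0)$, since $\bc(S_0)=\bc(S)$, and since every prime divisor of $S$ has its generic point in $S_0$ (so the valuations $\ord_{D_i}$ computed on $S$ and on $S_0$ coincide), the identification is essentially formal, with the floor convention for $\QQ$-divisors absorbed by the integrality of $(h)$. The paper instead passes to a resolution $\pi:\bar S\to S$, invokes the projection formula to get $H^0(S,\cO_S(D))=H^0(\bar S,\cO_{\bar S}(\floor{\pi^*D}))$, and then must verify that the additional conditions $\ord_E\left((\pi^*h_1)+\pi^*D\right)\geq 0$ along the exceptional divisors are automatically implied by $(h_1)+D\geq 0$; this rests on the observation that $-A^{-1}$ has non-negative entries when $A$ is negative definite with non-negative off-diagonal entries, i.e., that pullbacks of effective divisors are effective. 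Your argument is shorter and more elementary, and it would work verbatim for any normal variety rather than just surfaces (no negative-definiteness is needed); the paper's argument, on the other hand, records the effectivity-of-pullback fact and rehearses the passage to $\bar S$ with $\floor{\pi^*D}$, which is exactly the mechanism reused in the explicit $H^0$ computations of sections~\ref{sec:RRTsurfaces-specialtype} and~\ref{sec:RRTsurfaces-generaltype}.
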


\begin{proof}
The statement is true for an integral divisor on a smooth variety.
Let $\pi: \bar S \to S$ be a resolution of $S$.
Then, by Proposition~\ref{prop:projformula} (the projection formula) one has 
$\cO_S(D) = \pi_{*} \cO_{\bar S}(\pi^{*} D) = \pi_{*} \cO_{\bar S}(\floor{\pi^{*} D})$
and thus $H^0 (S,\cO_S(D)) = H^0(\bar S,\cO_{\bar S}(\floor{\pi^{*} D}))$. 
Since the latter is an integral divisor in a smooth variety,
$$
H^0(\bar S,\cO_{\bar S}(\floor{\pi^{*} D})) = \{ h \in \bc(\bar S) \mid (h) + \floor{\pi^* D} \geq 0 \}.
$$

The condition $(h) + \floor{\pi^* D} \geq 0$ is equivalent to $(h) + \pi^{*} D \geq 0$ because $(h)$ is an integral divisor.
Writing $h = \pi^{*} h_1$ where $h_1 \in \bc(S)$, one can split the previous condition $(\pi^{*} h_1) + \pi^{*} D \geq 0$
into two different ones,
\begin{equation}\label{eq:global_sections}
(h_1) + D \geq 0 \quad \text{ and} \quad \ord_E \left( (\pi^{*} h_1) + \pi^{*} D \right) \geq 0,
\end{equation}
for all $E\in\cE$.

Note that if $A = (a_{ij})_{i,j}$ is a negative definite real matrix such that $a_{ij} \geq 0$, $i\neq j$,
then $-A^{-1}$ has all non-negative entries. This implies that the pull-back of an effective divisor
is also an effective divisor. Hence one can easily observe that second condition in~\eqref{eq:global_sections}
is implied by the first one and the proof is complete.
\end{proof}

In more generality, given $\varphi:\Sigma \to S$ a birational morphism between normal surfaces and $\cE$ 
its exceptional locus, one can define $\varphi_*(C)$ for any given irreducible divisor $C\in \Div(\Sigma)$ as 0
if $C\in \cE$ or $\varphi(C)$ otherwise. 
This can be extended by linearity to $\Div_\QQ(\Sigma)$. Also, if $D\in \Div_\QQ(S)$, 
then $\varphi^*(D)$
is defined as in the case of a resolution~$\pi$ since for $\varphi$ the intersection matrix
of exceptional irreducible divisors is still negative definite.
One has the following generalization of the projection formula.

\begin{prop}[{\cite[Thm.~6.2]{Sakai84}}]
If $D\in \Div_\QQ(S)$ and $\varphi:\Sigma\to S$ is a birational morphism between normal surfaces and $Z$ an effective
divisor supported on the exceptional locus $\cE$ of $\varphi$, then 
$$
\varphi_*(\cO_\Sigma(\varphi^*D+Z))\cong \cO_S(D).
$$
\end{prop}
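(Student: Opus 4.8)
The plan is to reduce the general birational morphism $\varphi:\Sigma\to S$ to the already-established case of a resolution (Proposition~\ref{prop:projformula}) by factoring through a common resolution. First I would choose a resolution $\pi:\bar S\to S$ that dominates $\Sigma$, i.e.\ a resolution $\bar\pi:\bar S\to\Sigma$ with $\varphi\circ\bar\pi=\pi$; such a simultaneous resolution exists by resolving the singularities of $\Sigma$ compatibly with those of $S$. The exceptional locus $\bar\cE$ of $\pi$ then contains (the strict transforms of) the components of $\cE$ together with the new exceptional divisors of $\bar\pi$, and all of these are contracted by $\pi$, so $M_{\bar\cE}$ is negative definite and $\pi^*D$ is well defined.

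The key computation is to compare the two pushforwards via $\varphi_*=\pi_*\circ(\bar\pi_*)^{-1}$, more precisely to run everything through $\bar S$. Since $\bar\pi$ is itself a resolution of $\Sigma$, the first case gives
\[
\varphi_*\bigl(\cO_\Sigma(\varphi^*D+Z)\bigr)
=\pi_*\bigl(\cO_{\bar S}(\bar\pi^*(\varphi^*D+Z))\bigr)
=\pi_*\bigl(\cO_{\bar S}(\floor{\bar\pi^*\varphi^*D+\bar\pi^*Z})\bigr).
\]
Now I would use the compatibility $\bar\pi^*\varphi^*D=\pi^*D$ (both sides are the unique $\QQ$-pullback of $D$ to $\bar S$, since the defining orthogonality conditions against $\bar\cE$ coincide). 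Thus the divisor on $\bar S$ is $\pi^*D+\bar\pi^*Z$, where $\bar\pi^*Z$ is supported on $\bar\cE$ because $Z$ is supported on $\cE$ and pullback preserves exceptional support.

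The heart of the argument, and the step I expect to be the main obstacle, is showing that adding the effective exceptional correction $\bar\pi^*Z$ does not change the pushforward, i.e.\ that
\[
\pi_*\bigl(\cO_{\bar S}(\floor{\pi^*D+\bar\pi^*Z})\bigr)\cong\pi_*\bigl(\cO_{\bar S}(\floor{\pi^*D})\bigr)\cong\cO_S(D),
\]
the last isomorphism being Proposition~\ref{prop:projformula}. I would argue this by mimicking the proof of Proposition~\ref{prop:H0}: a local section on the right is a function $h$ with $(h)+\pi^*D\geq 0$ away from $\bar\cE$, and the negative-definiteness of $M_{\bar\cE}$ together with the sign pattern of $-M_{\bar\cE}^{-1}$ (nonnegative entries, as recorded in the proof of Proposition~\ref{prop:H0}) forces the exceptional orders to be automatically nonnegative. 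The extra effective term $\bar\pi^*Z\geq 0$ only relaxes the inequalities along $\bar\cE$, so it enlarges the superset of admissible functions along exceptional directions but cannot produce new global sections: any $h\in\bc(S)$ admissible for $\floor{\pi^*D+\bar\pi^*Z}$ must already satisfy $(h_1)+D\geq 0$ on $S_0$, whence $(\pi^*h_1)+\pi^*D\geq 0$ everywhere by the same sign argument, and the two section spaces coincide. The delicate point is handling the floor function together with the rational coefficients of both $\pi^*D$ and $\bar\pi^*Z$, so I would keep the inequalities in terms of $\pi^*D+\bar\pi^*Z$ rather than its floor, exactly as in the equivalence step $(h)+\floor{\pi^*D}\geq 0\iff(h)+\pi^*D\geq 0$ used in Proposition~\ref{prop:H0}, and invoke that $(h)$ is integral to pass freely between the two.
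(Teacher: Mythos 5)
The paper offers no proof of this proposition at all: it is quoted directly from \cite[Thm.~6.2]{Sakai84}, so there is no internal argument to compare yours against. Your proposal is, however, a correct self-contained derivation from the two tools the paper does develop, namely Proposition~\ref{prop:projformula} and the section-theoretic description of Proposition~\ref{prop:H0}. The two substantive points are both sound: (i) the identity $\bar\pi^{*}\varphi^{*}D=\pi^{*}D$ follows because, by the projection formula for Mumford intersection numbers, $\bar\pi^{*}\varphi^{*}D$ is orthogonal to every $\pi$-exceptional curve and has the strict transform of $D$ as its non-exceptional part, so negative definiteness of the exceptional intersection matrix forces it to coincide with $\pi^{*}D$; (ii) the section spaces of $\floor{\pi^{*}D+\bar\pi^{*}Z}$ and $\floor{\pi^{*}D}$ agree because the effective correction $\bar\pi^{*}Z\geq 0$ only relaxes the exceptional inequalities, while conversely any $h$ with $(h)+\pi^{*}D+\bar\pi^{*}Z\geq 0$ pushes forward to $(h_1)+D\geq 0$ (the exceptional correction dies under $\pi_{*}$), whence $(h)+\pi^{*}D=\pi^{*}\bigl((h_1)+D\bigr)\geq 0$ by effectivity of pullbacks. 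Two points deserve explicit mention in a written-up version: the comparison must be carried out over $\pi^{-1}(U)$ for every open $U\subseteq S$, not just for global sections, so that you identify the two pushforward \emph{sheaves} as equal subsheaves of the constant sheaf $\bc(S)$; and $\bar\pi^{*}Z$ generally acquires rational coefficients on the $\bar\pi$-exceptional divisors even when $Z$ is integral, so one should keep the inequalities in terms of the rational divisor and use integrality of $(h)$, exactly as you indicate, rather than distributing the floor. Neither issue affects the validity of your argument.
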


\subsection{Picard group for normal surfaces}
\label{sec:picard}

The Picard group $\Pic(S)$ of a smooth projective surface~$S$  
is the quotient of $\Div(S)$ by the subgroup of linear divisors, i.e.
the divisors of the non-zero meromorphic functions on~$S$. The following
exact sequence holds:
\[
0\to\bc^*\to \bc(S)^*\to \Div(S)\to \Pic(S)\to 0.
\]
Two divisors $D_1,D_2\in \Div(S)$ that define the same class in $\Pic(S)$ are 
called \emph{linearly equivalent} and denoted $D_1\sim D_2$. 
Linear equivalence
can be extended to $\Div_\QQ(S)$ as follows: $D_1,D_2\in \Div_\QQ(S)$ are 
linearly equivalent if $D_1-D_2\in \Div(S)$ and $D_1-D_2\sim 0$. 
Analogously, two divisors $D_1,D_2\in \Div(S)$ are called \emph{numerically equivalent}
and denoted $D_1\simmap{\textrm{num}} D_2$ if $D_1\cdot C=D_2\cdot C$ for any $C\in \Div(S)$.
Numerical equivalence can also be extended to $\QQ$-divisors.

Let $\cD$ be a subset of Weil divisors of~$S$ (a normal projective surface)
and let $\Div(S,\cD)$ be the free abelian subgroup of $\Div(S)$ with
basis $\cD$. Let $\bc_\cD(S)^*$ be the multiplicative subgroup of functions~$h$
such that $(h)\in\Div(S,\cD)$. It is clear that the cokernel of
the divisor map $\bc_\cD(S)^*\to\Div(S,\cD)$ is a subgroup of $\Pic(S)$.
In particular, if the images of $\cD$ in $\Pic(S)$ form a generating system,
then the divisors of a generating system of $\bc_\cD(S)^*$ are a complete system
of relations for $\Pic(S)$; conversely, given a generating system $\cD$ and
a complete system of relations for $\Pic(S)$, the functions coming from these relations
are a generating system of $\bc_\cD(S)^*$.

\begin{ejm}
Let $S=\bp^2$; the irreducible divisors are the irreducible plane curves.
Let $H$ be any line; for any curve $C_d$ of degree~$d$, it is
well known that $d H- C_d$ is a linear divisor, i.e., $H$ generates $\Pic(\bp^2)$.
Since $K^*_{H}(\PP^2)=\CC^*$, this implies
$\Pic(\bp^2)=\bz H$ as it is well known. 
With the same ideas $\Pic(\bp^1\times\bp^1)$ is isomorphic to $\bz^2$
generated by the factors.
\end{ejm}

\begin{prop}
\label{prop:pic1}
Let $\varphi:\Sigma\to S$ be a birational morphism; let $\cD$ be
a set of divisors of $S$ generating $\Pic(S)$ and let $\cR$ be a set
of relations such that $\Pic(S)\cong\Div(S,\cD)/\langle\cR\rangle$.
For each $R\in\cR$, let $h_R\in K_\cD^*(S)\subset K^*(S)$ be a function such that $R=(h_R)$.
Let $\tilde{h}_R=\varphi^* h_R\in K^*(\Sigma)$.
Then, if $\cE$ is the set of exceptional locus of~$\varphi$ and $\tilde\cD$ is the set of strict
transforms of the divisors in $\cD$, then
\[
\Pic(\Sigma)\cong\Div(\Sigma,\cE\cup\tilde\cD)/\langle(\tilde{h}_R)\mid R\in\cR\rangle
\]
\end{prop}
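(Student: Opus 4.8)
The plan is to exhibit a natural surjection from the prescribed group of divisors onto $\Pic(\Sigma)$ and identify its kernel with the stated relations. The foundational observation I would record first is that, since $\varphi$ is birational, it induces an identification of function fields $K^*(\Sigma)=K^*(S)$; under this identification $\tilde h_R=\varphi^*h_R=h_R$ as rational functions, and for any $g\in K^*(S)$ the two divisors are related by $(g)_\Sigma=\wt{(g)_S}+\sum_{E\in\cE}\ord_E(g)\,E$, where $\wt{(g)_S}$ denotes the strict transform. In particular $\varphi_*(g)_\Sigma=(g)_S$, since $\varphi_*$ kills exceptional components and sends each strict transform back to its image. Everything else follows from these compatibilities together with the hypothesis that $\cD$ generates $\Pic(S)$ with complete set of relations $\cR$. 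Concretely, I would consider the homomorphism $\Phi\colon\Div(\Sigma,\cE\cup\tilde\cD)\to\Pic(\Sigma)$ sending a divisor to its linear class, and reduce the statement to showing that $\Phi$ is surjective with kernel $\langle(\tilde h_R)\mid R\in\cR\rangle$.

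For surjectivity it suffices to treat the class of an arbitrary irreducible $C\subset\Sigma$. If $C$ is exceptional then $C\in\cE$; otherwise $C=\tilde{C'}$ is the strict transform of the irreducible divisor $C'=\varphi(C)$ on $S$. Because $\cD$ generates $\Pic(S)$, I would write $C'\sim\sum_i m_iD_i$ with $D_i\in\cD$, say $C'-\sum_i m_iD_i=(f)_S$ for some $f\in K^*(S)$. Taking divisors on $\Sigma$ via the compatibility above yields $(f)_\Sigma=\tilde{C'}-\sum_i m_i\tilde{D_i}+\sum_{E\in\cE}\ord_E(f)\,E$; since $(f)_\Sigma\sim 0$, this expresses $[C]=[\tilde{C'}]$ as an integral combination of the classes of the $\tilde{D_i}\in\tilde\cD$ and the $E\in\cE$, giving surjectivity.

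For the kernel, the inclusion $\langle(\tilde h_R)\rangle\subseteq\ker\Phi$ is immediate: each $(\tilde h_R)$ is principal on $\Sigma$, hence $\sim 0$, and it lies in $\Div(\Sigma,\cE\cup\tilde\cD)$ because $h_R\in K_\cD^*(S)$ forces $(h_R)_S$ to be supported on $\cD$. For the reverse inclusion I would take $W\in\ker\Phi$, so $W=(g)_\Sigma$ for some $g\in K^*(S)$ and $W$ is supported on $\cE\cup\tilde\cD$. Pushing forward, $(g)_S=\varphi_*W\in\Div(S,\cD)$; being principal, it lies in the kernel of $\Div(S,\cD)\to\Pic(S)$, which by hypothesis is $\langle(h_R)\mid R\in\cR\rangle$, so $(g)_S=\sum_R n_R(h_R)_S$ for integers $n_R$. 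Then $g\cdot\prod_R h_R^{-n_R}$ has trivial divisor on the projective normal surface $S$, hence is a nonzero constant, and therefore $(g)_\Sigma=\sum_R n_R(\tilde h_R)\in\langle(\tilde h_R)\rangle$. This yields $\ker\Phi=\langle(\tilde h_R)\rangle$ and the desired isomorphism.

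The main obstacle, though largely bookkeeping, is the consistent handling of strict transforms against exceptional contributions: one must be certain that the divisor of a function on $\Sigma$ splits as strict transform plus an integral exceptional part, and that $\varphi_*$ recovers exactly $(g)_S$. The one genuinely nontrivial input is the vanishing-of-divisor step, which relies on $S$ being projective and normal so that a global rational function without zeros or poles is constant; this is precisely what upgrades the additive relation $(g)_S=\sum_R n_R(h_R)_S$ on $S$ to the multiplicative identity $g=c\prod_R h_R^{n_R}$ needed to conclude on $\Sigma$.
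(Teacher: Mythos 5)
Your proof is correct and follows essentially the same route as the paper's: the natural map $\Div(\Sigma,\cE\cup\tilde\cD)\to\Pic(\Sigma)$, surjectivity obtained by pushing forward to $S$ and invoking that $\cD$ generates $\Pic(S)$, and identification of the kernel with the divisors of the pulled-back relation functions $\tilde h_R$. You supply more detail than the paper — notably the step upgrading $(g)_S=\sum_R n_R(h_R)_S$ to $g=c\prod_R h_R^{n_R}$ via constancy of a divisor-free rational function on a projective normal surface, which the paper leaves implicit — but the underlying argument is the same.
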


\begin{proof}
Let us consider the natural map $\Div(\Sigma,\cE\cup\tilde\cD)\to\Pic(\Sigma)$. Let us consider
$D\in\Div(\Sigma)$. Note that $D=\varphi^*\varphi_*D+E_D$ where $E_D\in\Div(\Sigma,\cE)$. There is a divisor
$D_1\in\Div(S,\cD)$ such that $\varphi_*D\sim D_1$ and as a consequence $D\sim\varphi^*D_1+\tilde{E}$ where
$\tilde{E}\in\Div(\Sigma,\cE)$ and $\varphi^*D_1\in\Div(\Sigma,\tilde{E})$. Hence the above natural map is surjective.

The relations are given by functions in $\bc(\Sigma)^*$ whose divisors are in $\Div(\Sigma,\cE\cup\tilde\cD)$
and those ones are generated by $h_R=\varphi^*h_R$, $R\in\cR$.
\end{proof}

\begin{obs}
Let $R=\sum_{D\in\cD}m_D D$; let us denote $\tilde{D}$ its strict transform.
Then 
\[
(\tilde{h}_R)=\sum_{D\in\cD} m_D \varphi^*(D)=\sum_{D\in\cD} m_D\tilde{D}+\sum_{E\in\cE} m_E E;
\]
the coefficients $m_E$ are integers, while it is possible that the
coefficient of $E$ in each particular $\varphi^*(D)$ is non integer.
\end{obs}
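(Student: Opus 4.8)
The plan is to read the observation as the assertion that, in Sakai's pullback formalism, the divisor of the pulled-back function $\tilde{h}_R=\varphi^*h_R$ coincides with the pullback $\varphi^*(R)$ of the relation $R=(h_R)$, together with the bookkeeping of which coefficients are forced to be integral. Write $R=\sum_{D\in\cD}m_D D$ with $m_D\in\ZZ$ (this holds since $R\in\Div(S,\cD)$, a free abelian group), and recall that for each $D\in\cD$ one has $\varphi^*D=\tilde D+\sum_{E\in\cE}m_{E,D}E$, where the rational numbers $m_{E,D}$ are the unique solution of the orthogonality system $\varphi^*D\cdot E'=0$ ($E'\in\cE$), unique because $M_\cE$ is negative definite. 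The second displayed equality is then purely formal: by $\ZZ$-linearity of the strict transform and of $\varphi^*$,
$$\sum_{D\in\cD}m_D\varphi^*(D)=\sum_{D\in\cD}m_D\tilde D+\sum_{E\in\cE}\Big(\sum_{D\in\cD}m_D m_{E,D}\Big)E,$$
so setting $m_E:=\sum_{D\in\cD}m_D m_{E,D}$ reproduces the right-hand side of the observation.

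For the first equality I would first record that both divisors $(\tilde{h}_R)$ and $\sum_{D\in\cD}m_D\varphi^*(D)$ have the same non-exceptional part. Indeed, away from the exceptional locus $\varphi$ is an isomorphism and $\tilde{h}_R$ restricts to $h_R$, so $\ord_{\tilde D}(\tilde{h}_R)=\ord_D(h_R)=m_D$; hence the non-exceptional part of $(\tilde{h}_R)$ is exactly the strict transform of $(h_R)=R$, namely $\sum_{D\in\cD}m_D\tilde D$. Consequently
$$(\tilde{h}_R)-\sum_{D\in\cD}m_D\varphi^*(D)=\sum_{E\in\cE}a_E E$$
is supported on $\cE$, and it remains to see that this difference vanishes.

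The main step, and the point I expect to be the only genuine subtlety, is to show that $(\tilde{h}_R)$ meets every exceptional curve trivially. Since $\tilde{h}_R=\varphi^*h_R$ is an honest rational function on $\Sigma$, the divisor $(\tilde{h}_R)$ is principal, hence Cartier, and the associated line bundle $\cO_\Sigma((\tilde{h}_R))$ is trivial; therefore for each complete curve $E'\in\cE$ one has $(\tilde{h}_R)\cdot E'=\deg\big(\cO_\Sigma((\tilde{h}_R))|_{E'}\big)=0$. I phrase it through the triviality of $\cO_\Sigma((\tilde{h}_R))$ precisely so the argument stays valid directly on the possibly singular surface $\Sigma$, without passing to a resolution. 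On the other hand, by the very definition of the pullback, $\varphi^*(D)\cdot E'=0$ for all $D$ and all $E'$, hence $\big(\sum_{D\in\cD}m_D\varphi^*(D)\big)\cdot E'=0$ as well. Intersecting the displayed difference with each $E'$ then gives $\sum_{E\in\cE}a_E(E\cdot E')=0$ for all $E'\in\cE$, i.e. $M_\cE\,(a_E)_E=0$; negative definiteness of $M_\cE$ forces $a_E=0$ for every $E$, proving $(\tilde{h}_R)=\sum_{D\in\cD}m_D\varphi^*(D)$.

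The two coefficient assertions then follow at once. Integrality of $m_E$ holds because $m_E=\ord_E(\tilde{h}_R)$ is the vanishing order along $E$ of a genuine rational function, hence an integer; note this uses the first equality, not merely the definition of the individual $\varphi^*(D)$. By contrast, each single coefficient $m_{E,D}=\ord_E(\varphi^*D)$ is a priori only rational, since it is produced by inverting the negative-definite matrix $M_\cE$, whose inverse has non-integral entries in general. Explicit instances already occur in the weighted blow-up formulas of Section~\ref{sec:blowup} (for example $A\cdot E=\tfrac1q$), which exhibit the fractional coefficients referred to in the observation.
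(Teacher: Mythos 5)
Your proof is correct: the paper states this observation without any proof, and your argument --- matching the strict-transform parts, observing that the principal divisor $(\tilde{h}_R)$ is Cartier with trivial associated line bundle and hence has zero intersection with every complete exceptional curve, and then invoking the negative definiteness of $M_\cE$ to kill the exceptional difference --- is exactly the standard justification the authors leave implicit. Your reading of the two coefficient claims (integrality of $m_E=\ord_E(\tilde{h}_R)$ because it is the vanishing order of a genuine rational function, versus the a priori only rational coefficients of each individual $\varphi^*(D)$ coming from inverting $M_\cE$, as in the weighted blow-up formulas) is also the intended one.
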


The proposition above has a simple converse.

\begin{prop}
\label{prop:pic2}
Let $\varphi:\Sigma\to S$ be a composition of weighted blow-ups; let $\cE$ be the exceptional locus of~$\varphi$ and 
let $\cD$ be a set of divisors of $\Sigma$ generating $\Pic(S)$ 
and containing $\cE$; let $\cR$ be a set of relations such that $\Pic(\Sigma)\cong\Div(\Sigma,\cD)/\langle\cR\rangle$.
Let $\check{\cD}$ be the set of $\varphi_*$-images of the elements of $\cD\setminus\cE$.
Let us express $R\in\cR$ as $R=\sum_{D\in\cD} m_D(R) D$.
Then
\[
\Pic(S)\cong\Div(S,\check\cD)\left/\left\langle\left.\sum_{D\in\cD\setminus\cE}m_D(R)\pi_*(D)\right| R\in\cR\right\rangle\right.
\]
\end{prop}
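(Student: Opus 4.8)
The plan is to study the push-forward homomorphism $\varphi_*\colon\Pic(\Sigma)\to\Pic(S)$, to prove that it is surjective with kernel generated by the classes of the exceptional divisors, and then to feed the given presentation of $\Pic(\Sigma)$ into the resulting quotient description of $\Pic(S)$. First I would record that $\varphi_*$ is well defined at the level of Picard groups: since $\varphi$ is a proper birational morphism of normal surfaces, $\bc(\Sigma)=\bc(S)$, and for any $f$ in this common field one has $\varphi_*\bigl((f)_\Sigma\bigr)=(f)_S$, so principal divisors go to principal divisors. I would also note the bookkeeping facts that $\varphi_*E=0$ for $E\in\cE$, that $\varphi_*C=\varphi(C)$ is irreducible for every non-exceptional prime $C$, and that $D\mapsto\varphi_*D$ is a bijection of $\cD\setminus\cE$ onto $\check\cD$ whose inverse is the strict transform (the unique non-exceptional prime divisor lying over a given one). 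In particular $\varphi_*$ restricts to an isomorphism $\Div(\Sigma,\cD\setminus\cE)\xrightarrow{\sim}\Div(S,\check\cD)$.

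Surjectivity of $\varphi_*\colon\Pic(\Sigma)\to\Pic(S)$ is straightforward: any prime $D'$ on $S$ equals $\varphi_*$ of its strict transform, whose class lies in $\langle\cD\rangle$, so $[D']\in\langle\check\cD\rangle$. The crux is the kernel. Suppose $L\in\Div(\Sigma)$ satisfies $\varphi_*L\sim 0$, say $\varphi_*L=(f)_S$ with $f\in\bc(S)^*=\bc(\Sigma)^*$, and split $L=L'+L''$ into its non-exceptional and exceptional parts, so that $\varphi_*L=\varphi_*L'$. The non-exceptional part of $(f)_\Sigma$ is the strict transform of $\varphi_*(f)_\Sigma=(f)_S=\varphi_*L'$, which is exactly $L'$; hence $(f)_\Sigma$ and $L'$ differ by a divisor supported on $\cE$. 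Since $(f)_\Sigma\sim0$, this shows $[L']$, and therefore $[L]$, lies in the subgroup generated by the classes $[E]$, $E\in\cE$. Together with the obvious reverse inclusion, I obtain
\[
\Pic(S)\cong\Pic(\Sigma)\big/\big\langle[E]\mid E\in\cE\big\rangle.
\]

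It then remains to translate this into the stated presentation. Substituting $\Pic(\Sigma)\cong\Div(\Sigma,\cD)/\langle\cR\rangle$ yields $\Pic(S)\cong\Div(\Sigma,\cD)/\langle\cR\cup\cE\rangle$. Because $\cE\subseteq\cD$, the free group splits as $\Div(\Sigma,\cD)=\Div(\Sigma,\cD\setminus\cE)\oplus\Div(\Sigma,\cE)$, so collapsing $\langle\cE\rangle=\Div(\Sigma,\cE)$ first and applying the isomorphism $\varphi_*\colon\Div(\Sigma,\cD\setminus\cE)\xrightarrow{\sim}\Div(S,\check\cD)$ from the first paragraph identifies the quotient with $\Div(S,\check\cD)$. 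Under this identification each relation $R=\sum_{D\in\cD}m_D(R)D$ is sent to $\varphi_*R=\sum_{D\in\cD\setminus\cE}m_D(R)\,\varphi_*(D)$, which is precisely the generator recorded in the statement (there written with $\pi_*=\varphi_*$), giving
\[
\Pic(S)\cong\Div(S,\check\cD)\Big/\Big\langle\sum_{D\in\cD\setminus\cE}m_D(R)\,\varphi_*(D)\ \Big|\ R\in\cR\Big\rangle.
\]

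The main obstacle is the kernel computation, that is, controlling how linear equivalence on $S$ lifts to $\Sigma$: the essential point is that a function witnessing $\varphi_*L\sim0$ produces on $\Sigma$ a principal divisor equal to $L'$ up to an exceptional correction, so that no relation among the $\check\cD$ survives beyond the push-forwards of the relations in $\cR$. A secondary point that must be handled with care is that $\cD\setminus\cE\to\check\cD$ is a genuine bijection onto irreducible divisors, which is what makes $\varphi_*$ an isomorphism on the relevant divisor groups; here the hypothesis $\cE\subseteq\cD$ is used in an essential way, as it is exactly what allows the exceptional relations $\cE$ and the $\varphi$-exceptional corrections above to be absorbed simultaneously.
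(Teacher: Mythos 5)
Your proof is correct and follows essentially the same route as the paper's (much terser) argument: both come down to the facts that $\varphi_*$ preserves linear equivalence, hence $\check\cD$ generates $\Pic(S)$, and that because $\cE\subseteq\cD$ a linear equivalence on $S$ lifts to one on $\Sigma$ up to a correction supported on $\cE$, so the only relations among the $\check\cD$ are the push-forwards of those in $\cR$. Your explicit computation of $\ker\bigl(\varphi_*\colon\Pic(\Sigma)\to\Pic(S)\bigr)$ as the subgroup generated by the exceptional classes is a restatement, with the details usefully filled in, of the paper's one-line observation that $\bc_\cD(\Sigma)^*$ equals $\bc_{\check\cD}(S)^*$.
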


\begin{proof}
It is easily seen that $D_1,D_2\in\Div(\Sigma)$ such that $D_1\sim D_2$ then $\varphi_*D_1\sim\varphi_*D_2$.
Hence, it is clear that $\Div(S,\check\cD)$ generates $\Pic(S)$. Note also that
$\bc_\cD(\Sigma)^*$ equals $\bc_{\check{\cD}}(S)^*$, for we need $\cE\subset\cD$.
\end{proof}

\begin{obs}
The condition $\cE\subset\cD$ is essential in the hypotheses of the proposition; without it, the result does not hold.
\end{obs}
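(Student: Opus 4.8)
The plan is to justify this observation by producing a single explicit counterexample in which $\cE\not\subset\cD$ while every other hypothesis of the proposition holds, and in which the displayed presentation fails to compute $\Pic(S)$. I would work with the simplest birational morphism available: take $S=\bp^2$, so that $\Pic(S)\cong\bz$ is generated by the class $H$ of a line, and let $\varphi:\Sigma\to S$ be the ordinary blow-up of a point $p\in\bp^2$. Then $\cE=\{E\}$ and $\Pic(\Sigma)\cong\bz^2$ is freely generated by $\varphi^*H$ and $E$.

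The heart of the construction is to pick a generating set of $\Pic(\Sigma)$ that deliberately omits the exceptional class. I would take $\cD=\{\tilde H_0,\tilde H_1\}$, where $H_0$ is a line avoiding $p$ and $H_1$ a line through $p$, so that $\tilde H_0$ and $\tilde H_1$ have classes $\varphi^*H$ and $\varphi^*H-E$. These two classes form a $\bz$-basis of $\Pic(\Sigma)$, so $\cD$ generates $\Pic(\Sigma)$ freely and one may take $\cR=\emptyset$; yet $\cE=\{E\}\not\subset\cD$, so the extra hypothesis is violated. Now $\check\cD=\{\varphi_*\tilde H_0,\varphi_*\tilde H_1\}=\{H_0,H_1\}$, and since $\cR$ is empty the group produced by the right-hand side of the proposition is all of $\Div(S,\check\cD)=\bz H_0\oplus\bz H_1\cong\bz^2$, whereas $\Pic(\bp^2)\cong\bz$. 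Hence the natural map is not an isomorphism and the conclusion fails.

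I would then pinpoint, at the level of functions, exactly where the argument breaks down, which simultaneously reveals where the proof of the proposition genuinely uses $\cE\subset\cD$. The relation $H_0\sim H_1$ that is missing above is realised by $h=\ell_0/\ell_1$, the ratio of the two defining linear forms, with $(h)=H_0-H_1\in\Div(S,\check\cD)$, so $h\in\bc_{\check\cD}(S)^*$. Its pull-back has divisor $\varphi^*(H_0-H_1)=\tilde H_0-\tilde H_1-E$, whose support meets $E$; hence $\varphi^*h\notin\bc_\cD(\Sigma)^*$ as soon as $E$ is dropped from $\cD$. This is exactly the failure of the identification $\bc_\cD(\Sigma)^*=\bc_{\check\cD}(S)^*$ on which the proof rests.

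I expect the only point requiring care to be the verification that $\cD$ really generates $\Pic(\Sigma)$ even though it omits $E$: this forces one to choose the two lines in \emph{different} position with respect to $p$, so that $E=\tilde H_0-\tilde H_1$ is recovered as an integral combination although it is not listed among the generators. Turning this into a genuine failure of the presentation, rather than the mere heuristic that ``exceptional classes are needed'', is what the example accomplishes.
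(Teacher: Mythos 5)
Your counterexample is correct: with $\varphi\colon\Sigma\to\bp^2$ the blow-up of a point, $\cD=\{\tilde H_0,\tilde H_1\}$ of classes $\varphi^*H$ and $\varphi^*H-E$ is a $\bz$-basis of $\Pic(\Sigma)$, so $\cR=\emptyset$ is admissible, and the recipe of Proposition~\ref{prop:pic2} then outputs $\bz^2$ while $\Pic(\bp^2)\cong\bz$; your diagnosis that the failure occurs at the identification $\bc_\cD(\Sigma)^*=\bc_{\check\cD}(S)^*$ (via $h=\ell_0/\ell_1$ with $(\varphi^*h)=\tilde H_0-\tilde H_1-E$ not supported on $\cD$) is exactly the step of the paper's proof that uses $\cE\subset\cD$. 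The paper states this observation without any justification, so there is no argument to compare against; your explicit example is the natural way to substantiate the claim and it is complete as written.
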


\begin{ejm}
With the notations of Example~\ref{ejm:weighted_Cremona},
we can express
\[
\Pic(\bp^2)=\langle X,Y,Z\mid X=Y=Z\rangle=
\langle X,Y,Z\mid X=Y, q X= q Z, p Y= p Z\rangle
\]
From Proposition~\ref{prop:pic1}
\begin{gather*}
\Pic(S)=\langle X,Y,Z,E_X,E_Y,E_Z\mid 
X+E_Z+p E_Y=Y+E_Z+q E_X, \\
q X+ q E_Z+ pq E_Y = q Z +q\alpha E_Y+q\beta E_X,
p Y+ p E_Z+ pq E_X = p Z +p\alpha E_Y+p\beta E_X\rangle=\\
\langle X,Y,Z,E_X,E_Y,E_Z\mid 
X+p E_Y=Y+q E_X,
q X+ q E_Z+ p\beta E_Y = q Z +r E_Y+q\beta E_X,\\
p Y+ p E_Z+ q\alpha E_X = p Z +p\alpha E_Y+r E_X\rangle.
\end{gather*}
With Proposition~\ref{prop:pic2} we obtain
\begin{gather*}
\Pic(\bp^2_\omega)=\langle E_X,E_Y,E_Z\mid 
p E_Y=q E_X,
q E_Z = r E_Y,
p E_Z =r E_X\rangle
\end{gather*}
which is the standard presentation of $\Pic(\bp^2_\omega)$.
\end{ejm}

\subsection{The canonical cycle for \texorpdfstring{$\mathbf{Q}$}{Q}-resolutions and the Riemann-Roch formula}
\label{sec:ZK}
A canonical cycle~$K_S$ on a normal projective surface~$S$ can be obtained as the direct image
of a canonical cycle in a resolution of~$S$; in particular,
$K_S\in\Div(S)$:
Analogously to the smooth case, a canonical cycle on a surface $S$ with quotient singularities obtained after blowing-up 
satisfies the adjunction formula (c.f.~\cite[p.~716]{Chen-OrbifoldAdjunction})
\begin{equation}
\label{eq:adjunction}
-K_S\cdot C=C^2 + \chi^{\orb}(C),
\end{equation}
where $\chi^{\orb}(C)$ is the orbifold Euler characteristic of
a \emph{smooth} irreducible divisor $C$, which in our case is simply, 
$$
\chi^{\orb}(C)=2-2g(C)+\sum_{{\Scale[.75]{\array{c} P\in C\\ S_P=\frac{1}{d_P}(1,\cdot)\endarray}}} \left( \frac{1}{d_P} - 1\right),
$$
the point $P$ runs on all singular points of the surface on $C$ and $d_P$ denotes its order as a quotient singularity. Sometimes we need only the numerical
properties of a canonical divisor. This is way 
a \emph{canonical cycle} can be defined as any cycle $Z_K\in \Div_\QQ(S)$ satisfying the numerical conditions~\eqref{eq:adjunction}, 
that is, $Z_K\cdot C=C^2 + \chi^{\orb}(C)$ for all \emph{smooth} 
irreducible divisors~$C$. Note that, even if a canonical cycle had integer 
coefficients, it may not be an anti-canonical divisor.
The anti-canonical divisor is numerically equivalent to the canonical cycle $-K_S\ \simmap{\textrm{num}}\ Z_K$. 

A useful tool to calculate $h^i(\cO_S(D)) := h^i(S,\cO_S(D))$ is the Riemann-Roch formula for normal surfaces 
\begin{equation}
\label{eq:RR}
\chi(S,\cO_S(D))-\chi(S,\cO_S)=\frac{1}{2}D(D+Z_K)-\Delta_S(-D),
\end{equation}
where $\Delta_S(-D)$ is a correction term that deserves some special attention. This tool is combined with Serre Duality~\cite[Theorem~3.1]{Blache-RiemannRoch}, which implies that 
$h^0(S,\cO_S(D))=h^2(S,\cO_S(K_S-D))$.

\begin{ejm}
Consider $S=\PP^2_{w}$, the weighted projective plane of weight $w=(w_0,w_1,w_2)$. The canonical divisor $K_S=-X_0-X_1-X_2$ has degree 
$-|w|$ for $|w|:=w_0+w_1+w_2$ --\,see~\cite{Dolgachev-weighted}. If $-|w|< k <0$, then $\chi_S(k):=\chi(S,\cO_S(k))=0$ and \eqref{eq:adjunction} becomes
\begin{equation}
\label{eq:adjp2}
\sum_{\{i_0,i_1,i_2\}=\{0,1,2\}}\Delta_{\frac{1}{w_{i_0}}(w_{i_1},w_{i_2})}(k+|w|)=1+\frac{k(k+|w|)}{2\bar w}=:g_{w,k},
\end{equation}
where $\bar w=w_0w_1w_2$.
\end{ejm}

\subsection{The correction term \texorpdfstring{$\Delta_S$}{Delta\_S}}

The correction term $\Delta_S(D)$ of the Riemann-Roch formula for normal surfaces~\eqref{eq:RR} has been considered in different contexts
(cf.~\cite{Laufer-normal,Brenton-RiemannRoch,Blache-RiemannRoch}). Here we will consider the quotient surface singularity case,
where $\Delta_S(D)=\sum_{P\in \Sing S}\Delta_P(D)$ is the sum of locally defined invariants at the singular 
points of $S$ of local type $\CC^2/G$ for a finite group $G$. More explicitly, $\Delta_P(D)$ is a rational map 
$$
\array{rrcl}
\Delta_P(D): &\Weil_P(S)/\Cart_P(S) & \longrightarrow & \QQ
\endarray
$$
defined on the local Weil divisors (formal finite combinations of local irreducible curves) vanishing on the local Cartier 
divisors (associated with principal ideals). This map is explicitly described in \cite{ji-J-curvettes} for cyclic surface singularities.
The quotient $\Weil_P(S)/\Cart_P(S)$ is isomorphic to the (multiplicative) group $\Hom(G,\CC^*)$ of characters on $G$, which in the 
cyclic case is the group of $d$-roots of unity, for $d:=|G|$. Once a choice of this root of unity $\zeta_d$ is given, then we can use 
the notation $\Delta_P(k)$ to describe $\Delta_P(\zeta^k)$ and the standard notation $S_P=\frac{1}{d}(1,p)$ for a local cyclic surface 
singularity $\CC^2/\mu_d$ given by the action $\mu_d(x,y)=(\zeta_d x,\zeta_d^py)$. 

\begin{prop}
\label{prop:Delta}
Let $k\in \ZZ$ and $p,q\in \ZZ_{\geq 0}$ with $d=p+q>0$ and $r,s\in \{0,1\}$. Then
\begin{equation}
\label{eq:delta-dual}
\begin{aligned}
&\Delta_{\frac{1}{d}(1,p)}(k-(r-1)p)+\Delta_{\frac{1}{d}(1,q)}(k-(s-1)q)=
\frac{(1-d)(r+s-2)}{2d}+\left\{\frac{-k}{d}\right\}(r+s-1)
\end{aligned}
\end{equation}
In particular,
\begin{enumerate}[label=\rm(\arabic*)]
\item $\Delta_{\frac{1}{d}(1,p)}(k)+\Delta_{\frac{1}{d}(1,q)}(k+q)=\frac{d-1}{2d}$,
\item $\Delta_{\frac{1}{d}(1,p)}(k)+\Delta_{\frac{1}{d}(1,q)}(k)= \{ \frac{-k}{d} \}$.
\end{enumerate}
\end{prop}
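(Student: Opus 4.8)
The plan is to reduce the master identity~\eqref{eq:delta-dual} to two simpler ingredients and then assemble them formally. Abbreviate $\Delta_p:=\Delta_{\frac{1}{d}(1,p)}$ and $\Delta_q:=\Delta_{\frac{1}{d}(1,q)}$, and let $F(r,s)$ denote the left-hand side of~\eqref{eq:delta-dual}. Because $r,s\in\{0,1\}$, the shift $-(r-1)p=(1-r)p$ is either $0$ (if $r=1$) or a single copy of $p$ (if $r=0$), and similarly for $q$, so
\[
F(r,s)=\Delta_p(k)+\Delta_q(k)+(1-r)\bigl(\Delta_p(k+p)-\Delta_p(k)\bigr)+(1-s)\bigl(\Delta_q(k+q)-\Delta_q(k)\bigr).
\]
Thus it suffices to know the base value $\Delta_p(k)+\Delta_q(k)$ and the two one-step increments $\delta_p(k):=\Delta_p(k+p)-\Delta_p(k)$ and $\delta_q(k):=\Delta_q(k+q)-\Delta_q(k)$. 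I expect $\Delta_p(k)+\Delta_q(k)=\{-k/d\}$ and $\delta_p(k)=\delta_q(k)=\frac{d-1}{2d}-\{-k/d\}$; substituting these, the left-hand side becomes $\{-k/d\}+(2-r-s)\bigl(\frac{d-1}{2d}-\{-k/d\}\bigr)$, which rearranges to $\frac{(1-d)(r+s-2)}{2d}+(r+s-1)\{-k/d\}$, exactly the right-hand side of~\eqref{eq:delta-dual}.

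Both ingredients I would obtain from the explicit arithmetic description of $\Delta_P$ for cyclic surface singularities recalled from~\cite{ji-J-curvettes}, in which $\Delta_{\frac{1}{d}(1,p)}(k)$ appears as a $k$-independent Dedekind-sum-type term plus a sawtooth (fractional-part) term depending on $k$ and $p$. The base value rests on the duality $q=d-p\equiv-p\pmod d$: the singularities $\frac{1}{d}(1,p)$ and $\frac{1}{d}(1,q)$ are complementary, their Dedekind-type parts are antisymmetric under $p\mapsto d-p$ and cancel, and the two sawtooth parts combine to the single fractional part $\{-k/d\}$, which is precisely the asserted base value. The increment $\delta_p(k)$ records the change of $\Delta_P$ when the class is shifted by $p$, i.e.\ when the local divisor $\{y=0\}$ is added to a Weil divisor of class $k$; tracking how $\lfloor\pi^{*}D\rfloor$ varies along the exceptional chain of a resolution produces a value that is in fact independent of $p$, namely $\frac{d-1}{2d}-\{-k/d\}$, and the same computation with the coordinates exchanged yields $\delta_q$.

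The assembly above is purely formal, so the real work—and the main obstacle—is the arithmetic bookkeeping in these two computations. Two points demand care: the behaviour at $k\equiv0\pmod d$, where the sawtooth term jumps and one must verify that the asserted value (with $\{-k/d\}=0$) still results; and the cancellation of the Dedekind-type contributions together with the precise matching of the surviving additive constant to $\frac{d-1}{2d}$. One should also check that the formula is legitimate for the full range of parameters allowed in the statement ($p,q\in\ZZ_{\geq 0}$ with $d=p+q$), treating $\Delta$ as the arithmetic function defined by the explicit formula so that the degenerate cases follow by specialization. Finally, the two itemized consequences (1) and (2) are recovered as the cases $(r,s)=(1,0)$ and $(r,s)=(1,1)$ of~\eqref{eq:delta-dual}.
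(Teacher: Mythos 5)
There is a genuine gap. Your formal assembly is correct as algebra: if one knows the base identity $\Delta_{\frac{1}{d}(1,p)}(k)+\Delta_{\frac{1}{d}(1,q)}(k)=\left\{\tfrac{-k}{d}\right\}$ and the increment identity $\Delta_{\frac{1}{d}(1,p)}(k+p)-\Delta_{\frac{1}{d}(1,p)}(k)=\tfrac{d-1}{2d}-\left\{\tfrac{-k}{d}\right\}$ (and likewise for $q$), then \eqref{eq:delta-dual} follows for all four choices of $(r,s)$. But these two ingredients are not auxiliary facts: the base identity is exactly item (2), i.e.\ the case $(r,s)=(1,1)$, and the increment identity is the difference of the cases $(r,s)=(1,1)$ and $(0,1)$. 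So your reduction merely observes that the four instances of \eqref{eq:delta-dual} are affine in $(r,s)$ and hence determined by two of them; the entire content of the proposition sits in those two instances, which you defer to ``the explicit arithmetic description of $\Delta_P$'' without carrying it out --- and you say as much yourself. Moreover, the structural claim you lean on is wrong: $\Delta_{\frac{1}{d}(1,p)}(k)$ is \emph{not} a $k$-independent Dedekind-type constant plus a single sawtooth in $k$ (already for $\frac{1}{d}(1,d-1)$ it is the quadratic-type expression $\bar k(d-\bar k)/2d$ in the residue $\bar k$), so the advertised mechanism --- ``Dedekind parts cancel, sawtooth parts add up to $\{-k/d\}$'' --- does not describe a computation that would succeed as stated. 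The claim that the increment is independent of $p$ is likewise asserted, not derived.

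The paper closes this gap without ever invoking a closed formula for $\Delta$: it applies the identity $\chi(\PP^2_w,\cO(\ell))=0$ for $-|w|<\ell<0$, in the form \eqref{eq:adjp2}, to the three weighted planes $\PP^2_{(d,1,p)}$, $\PP^2_{(d,1,q)}$ and $\PP^2_{(1,p,q)}$ at suitably shifted degrees. Each such vanishing expresses a sum of two $\Delta$-values as an explicit rational number $g_{w,\ell}$; subtracting the third equation from the sum of the first two cancels the cross terms $\Delta_{\frac{1}{p}(1,q)}(k)$ and $\Delta_{\frac{1}{q}(1,p)}(k)$ and leaves precisely the left-hand side of \eqref{eq:delta-dual}, after which the right-hand side is an elementary simplification of $g_{w_1,\cdot}+g_{w_2,\cdot}-g_{w_3,\cdot}$ using $r^2=r$, $s^2=s$, first for $k\in\{1,\dots,d\}$ and then for general $k$ via $-\bar k/d=\left\{-k/d\right\}-1$. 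If you want to rescue your outline, you would need to either reproduce such a global argument for your two ingredient identities or actually quote and manipulate the explicit formula from \cite{ji-J-curvettes}; as written, the proof of the proposition has not begun.
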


\begin{proof}
Note that $k$ can be considered modulo $d$ and hence it is enough to show the result for $k=1,\dots,d$.
We will use formula~\eqref{eq:adjp2} repeatedly for the projective planes $S_i=\PP^2_{w_i}$ for 
$w_1=(d,1,p)$, $w_2=(d,1,q)$, and $w_3=(1,p,q)$. Note that
$$
\begin{aligned}
0=\chi_{S_1}(k-1-d-rp) & = g_{w_1,k-1-d-rp}-\Delta_{\frac{1}{d}(1,p)}(k-(r-1)p)-\Delta_{\frac{1}{p}(1,q)}(k)\\
0=\chi_{S_2}(k-1-d-sq) & = g_{w_2,k-1-d-sq}-\Delta_{\frac{1}{d}(1,q)}(k-(s-1)q)-\Delta_{\frac{1}{q}(1,p)}(k)\\
0=\chi_{S_3}(k-1-d) & = g_{w_3,k-1-d}-\Delta_{\frac{1}{p}(1,q)}(k)-\Delta_{\frac{1}{q}(1,p)}(k).
\end{aligned}
$$
Subtracting the third equation from the sum of the first two, one obtains 
$$g_{w_1,k-1-d-rp}+g_{w_2,k-1-d-sq}-g_{w_3,k-1-d}=\Delta_{\frac{1}{d}(1,p)}(k-(r-1)p)+\Delta_{\frac{1}{d}(1,q)}(k-(s-1)q).$$
Also note that
$$
\begin{aligned}
g_{w_1,k-1-d-rp}+g_{w_2,k-1-d-sq}-g_{w_3,k-1-d} & =
\frac{pr^2+qs^2+(q-2k+1)r+(p-2k+1)s+2k-2}{2d}\\
&=\frac{(d-2k+1)(r+s)+2k-2}{2d}
\end{aligned}
$$
since $r^2=r$ and $s^2=s$.

Note that this equals the right-hand side of~\eqref{eq:delta-dual} in case $k\in \{1,\dots,d\}$.
The result follows for general $k\in \ZZ$ using that $-\frac{\bar k}{d}=\left\{ -\frac{k}{d} \right\}-1$, where 
$\bar k$ denotes the remainder class of $k$ in $\{1,\dots,d\}$.
\end{proof}

\begin{ejm}
In the particular case $k=0$, notice that $\Delta_{\frac{1}{d}(1,p)}(p)=\frac{d-1}{2d}$, since $\Delta_{\frac{1}{d}(1,q)}(0)=0$.
Multiplying by $\bar p$ (which amounts to choosing a different primitive $d$-root of unity) one obtains 
$\Delta_{\frac{1}{d}(\bar p,1)}(1)=\Delta_{\frac{1}{d}(1,\bar p)}(1)=\frac{d-1}{2d}$. Since this is true for 
any $p$ prime with $d$, one obtains $\Delta_{\frac{1}{d}(1,p)}(1)=\frac{d-1}{2d}$.

In particular, if $d=2$, then $\Delta_{\frac{1}{2}(1,1)}(0)=0$ and $\Delta_{\frac{1}{2}(1,1)}(1)=\frac{1}{4}$.
\end{ejm}

\section{Rational ruled toric surfaces}
\label{sec:RRTsurfaces}

Following~\cite{Sakai88} a \emph{rational ruled fibration} over a normal projective surface $S$ is a surjective 
morphism $\pi:S\to \PP^1$ whose generic fiber is isomorphic to $\PP^1$. Moreover, the fibration is called \emph{minimal}
if its fibers are irreducible. In this spirit one can define

\begin{dfn}
\label{def:trrs}
A \emph{rational ruled toric surface} is a projective normal surface $S$ with quotient singularities with and a minimal 
rational ruled fibration $\pi:S\to\bp^1$ with two marked fibers $F_1, F_2$ and two marked disjoint sections $S_1, S_2$. 
The toric structure of $S$ comes from the two-dimensional torus (projecting onto $\PP^1\setminus \{\pi(F_1),\pi(F_2)\}=\bc^*$ 
with $\bc^*$-fiber given by $\pi^{-1}(P)\setminus \{S_1\cap \pi^{-1}(P),S_2\cap \pi^{-1}(P)\}$), four $1$-dimensional 
tori (the two fibers and the two sections outside their intersection) and the $4$ vertices $F_i\cap S_j$ 
(containing the singularities of~$S$).
\end{dfn}

The following statement is an immediate consequence of Theorem 1.2 in~\cite{Sakai88}.

\begin{lema}\label{lema:sym}
Let $C\subset S$ be a 
a singular fiber of a rational ruled toric surface.
Then $\#(C\cap\sing S)=2$
and the two points are of type $\frac{1}{d}(1,m)$ and $\frac{1}{d}(1,-m)$.
\end{lema}

If the surface $S$ is smooth, we are considering a Hirzebruch surface $\Sigma_n$, two fibers, the negative
self-intersection section (if $n>0$) and a section with self-intersection~$n$ (two null self-intersections
if $n=0$).

\begin{ejm}
Let us fix three lines $A,B,C$ in general position in $\bp^2$, and let $p:=A\cap B$.
Let $\pi:\Sigma_1\to\bp^2$ be the blowing-up of~$p$. Then, the projection from~$p$
induces a rational ruled toric surface with the fibers $A,B$ and the sections $C,E$
(where~$E$ is the exceptional divisor of~$\pi$).
\end{ejm}

\begin{ejm}
The above example can be easily generalized. Keep the notation
of Example~\ref{ejm:weighted_Cremona}.

If $p_z:=X\cap Y$, then $(\bp^2_\omega,p_z)$ is a singular point
of type $\frac{1}{r}(p_X,q_Y)$. The $(p,q)$-weighted blow-up of $p_z$ is a map
$\rho:\Sigma\to\bp^2_\omega$ which is a rational ruled toric surface for
the fibers $X,Y$ and the sections $Z,E$; there are two singular points
on~$E$ of type $\frac{1}{q}(p_X,-r_E)$ and $\frac{1}{p}(q_Y,-r_E)$. The following holds:
\[
E^2=-\frac{r}{pq}=-Z^2\quad 
X^2=Y^2=0.
\]
Note the symmetries in the singulars point lying on a fiber.
\begin{figure}[ht]
\begin{center}
\begin{tikzpicture}[vertice/.style={draw,circle,fill,minimum size=0.2cm,inner sep=0}]
\tikzset{%
  suma/.style args={#1 and #2}{to path={%
 ($(\tikztostart)!-#1!(\tikztotarget)$)--($(\tikztotarget)!-#2!(\tikztostart)$)%
  \tikztonodes}}
} 
\coordinate (X) at (0,2);
\coordinate (Z) at (-1,0);
\coordinate (Y) at (1,0);
\coordinate (X1) at (-1,1);
\draw[suma=.5 and .5] (Y) to node[pos=-.1,below] {$Z$} (Z);
\draw[suma=.5 and .5] (X) to (Y) node[right] {$X$};
\draw[suma=.5 and .5] (X) to (Z) node[left] {$Y$};

\node[vertice] at (X) {};
\node[right] at (X) {$(r;p_X,q_Y)$};
\node[vertice] at (Y) {};
\node[above right] at (Y) {$(q;p_X,r_Z)$};
\node[vertice] at (Z) {};
\node[above left] at (Z) {$(p;q_Y,r_Z)$};

\node[] at ($(X)+(-2,1)$) {$\mathbb{P}^2_\omega$};

\begin{scope}[xshift=-7cm]
\coordinate (X) at (0,2);
\coordinate (Z) at (-1,0);
\coordinate (Y) at (1,0);
\coordinate (X1) at (-1,1);
\coordinate (XZ) at ($(X)+(Z)$);
\coordinate (XY) at ($(X)+(Y)$);

\draw[suma=.5 and .5] (Y) to node[right=2cm] {$Z$} (Z);
\draw[suma=.5 and .5] (XY) to (Y) node[right] {$X$};
\draw[suma=.5 and .5] (XZ) to (Z) node[left] {$Y$};
\draw[suma=.5 and .5] (XZ) to (XY) node[below right=-1pt] {$E_X$};

\node[vertice] at (XY) {};
\node[vertice] at (XZ) {};
\node[vertice] at (Y) {};
\node[vertice] at (Z) {};

\node[] at ($(X)+(-2,1)$) {$\Sigma$};
\draw (2.2,.7) rectangle +(4,.6);
\node[left=-10pt] at ($(X)+(5.65,-1)$) {\small $(p,q)$-blow up of $X\cap Y$};

\node[right=1cm] at ($1.2*(X)$) {$(q;p_X,{-r}_E)$};
\node[left=1cm] at  ($1.2*(X)$){$(p;q_Y,-r_E)$};

\end{scope}

\end{tikzpicture}

 \caption{Weighted blow-up in $\bp^2_\omega$}
\label{fig:weigh-bu}
\end{center}
\end{figure}
\end{ejm}

\begin{prop}\label{prop:condiciones}
Let $S$ be a rational ruled toric surface. Let $S_+,S_-$ be the sections
and $F_1,F_2$ be the fibers.
\begin{enumerate}
\enet{\rm(\arabic{enumi})} 
\item The types of the singular points $F_i\cap S_\pm$ are of the form
$\frac{1}{d_i}(1,\pm{n_i})$ corresponding to the equations of ${S_\pm}$ and $F_i$ respectively.
\item $S_+^2=-S_-^2=:r$.
\item $\frac{n_1}{d_1}+\frac{n_2}{d_2}-r\in\bz$.
\end{enumerate}

\end{prop}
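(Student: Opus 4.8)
The plan is to prove the three statements of Proposition~\ref{prop:condiciones} by combining the local structure of the four singular points, the symmetry coming from Lemma~\ref{lema:sym}, and the intersection-theoretic constraints forced by the ruling. I would work on a $\QQ$-resolution (or directly with $\QQ$-divisors, using the intersection numbers recorded in~\eqref{eq:intersections_pq}) so that all the needed self-intersections and cross-intersections are available as rational numbers.

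\textbf{Part (1).} First I would fix the local picture at each vertex $F_i\cap S_\pm$. By Definition~\ref{def:trrs} the sections $S_\pm$ and the fibers $F_i$ are the four toric boundary divisors, meeting transversally (in the $\QQ$-normal crossing sense) at the four torus-fixed points, which are exactly the singular points of $S$. Using Notation~\ref{ntc:cyclic}, at a vertex lying on the fiber $F_i$ the local type is $\frac{1}{d_i}(\cdot_{S_\pm},\cdot_{F_i})$, recording $S_\pm$ and $F_i$ as the two local axes. After the normalizations in the first \emph{obs} (so $\gcd(d_i,a)=\gcd(d_i,b)=1$), I may write the section-exponent as $1$ on each fiber, giving types $\frac{1}{d_i}(1,n_i)$ at $F_i\cap S_+$ and $\frac{1}{d_i}(1,n_i')$ at $F_i\cap S_-$. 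That the same $d_i$ occurs at both ends of $F_i$, and that $n_i'=-n_i$, is precisely the content of Lemma~\ref{lema:sym}: the singular fiber $F_i$ carries exactly two singular points of types $\frac{1}{d_i}(1,m)$ and $\frac{1}{d_i}(1,-m)$. This yields the form $\frac{1}{d_i}(1,\pm n_i)$ claimed in~(1).

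\textbf{Part (2).} Next I would compute the self-intersections of the two sections. The key is that $F_1$ and $F_2$ are numerically equivalent fibers, so $F_1\simmap{\textrm{num}}F_2$ and in particular $S_\pm\cdot F_1=S_\pm\cdot F_2$ and $F_i^2=0$. Because $S_+$ and $S_-$ are disjoint sections, $S_+\cdot S_-=0$. The plan is to produce a relation in $\Div_\QQ(S)$ expressing the difference $S_+-S_-$ in terms of the fiber class: since both are sections of the same ruling, $S_+-S_-$ should be numerically a multiple of the fiber, say $S_+\simmap{\textrm{num}}S_-+cF$ for some $c\in\QQ$. Intersecting this relation with $S_+$, with $S_-$, and with $F$, and using $S_+\cdot S_-=0$, $F^2=0$, $S_\pm\cdot F=1$ (a section meets a fiber once, appropriately interpreted as a $\QQ$-intersection), I get $S_+^2=c$ and $S_-^2=-c$, hence $S_+^2=-S_-^2$. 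Setting $r:=S_+^2$ gives~(2).

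\textbf{Part (3).} Finally, for~(3) I would exploit the adjunction/canonical relations or, more directly, a global linear equivalence among the toric boundary divisors. The cleanest route is to compute $S_+^2$ two ways. On one hand $S_+^2=r$ by~(2). On the other hand, I can compute $S_+^2$ by pulling $S_+$ back to a resolution, or by using the local correction at the two singular points $F_1\cap S_+$ and $F_2\cap S_+$ lying on $S_+$. Each such singular point of type $\frac{1}{d_i}(1,n_i)$ contributes a fractional term to the self-intersection of $S_+$ exactly as in the weighted blow-up formulas, where the strict-transform self-intersection drops by a fraction of the form $\tfrac{n_i}{d_i}$ (compare the $(A\cdot A)_{S_{P,\omega}}=(A\cdot A)_S-\tfrac{p}{dq}$ type formulas in~\eqref{eq:intersections_pq}). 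Requiring that the \emph{integer} self-intersection on a smooth model differ from the rational number $r$ by the sum of the two local fractional corrections $\frac{n_1}{d_1}+\frac{n_2}{d_2}$ forces $\frac{n_1}{d_1}+\frac{n_2}{d_2}-r\in\ZZ$. I expect the main obstacle to be exactly this last step: pinning down the precise local fractional contribution of each cyclic quotient singularity to the self-intersection of the section through it, and checking the signs and the role of the $\pm n_i$ convention so that the corrections add (rather than cancel) to give an integer. Once the bookkeeping of these local terms is matched correctly against the global intersection numbers from~(2), the integrality in~(3) follows.
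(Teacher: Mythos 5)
Your part (1) is exactly the paper's proof (the paper's entire written proof of this proposition is the one sentence reducing (1) to Lemma~\ref{lema:sym}); parts (2) and (3) are left implicit in the paper, so there your proposal is doing genuinely new work. Part (2) is essentially fine: since the ruling is minimal the fibers are irreducible, the numerical class group has rank two and is spanned by a section and the fiber class, so $(S_+-S_-)\cdot F=0$ forces $S_+-S_-$ to be numerically a multiple of $F$; you should say this explicitly rather than assert that the difference of two sections ``should be'' a fiber multiple, but the argument closes.

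Part (3) has a genuine gap, and it sits precisely where you flagged it. Your plan needs the local correction $S_+^2-\bar S_+^2$ at a point of type $\frac{1}{d_i}(1,\pm n_i)$ to be congruent to $\pm\frac{n_i}{d_i}$ modulo $\ZZ$, but this is not automatic and the answer depends on which of the two local branches carries the weight~$1$. Concretely, at a point of type $\frac{1}{d}(1_A,n_B)$ the correction to $A^2$ (the weight-$1$ axis) is $\frac{n'}{d}$ with $nn'\equiv 1\pmod d$, while the correction to $B^2$ is $\frac{n}{d}$. One can check this with $\frac{1}{5}(1_A,2_B)$: a $(1,2)$-blow-up drops $A^2$ by $\frac{1}{10}$ and leaves a point of type $\frac{1}{2}(1_A,1_E)$ on $A$, whose resolution drops $A^2$ by a further $\frac{1}{2}$, for a total of $\frac{3}{5}=\frac{2^{-1}\bmod 5}{5}$, not $\frac{2}{5}$. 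Since $\frac{n}{d}$ and $\frac{n'}{d}$ differ modulo $\ZZ$ in general, you cannot leave this unresolved: with the literal reading of the statement (weight $1$ on the equation of $S_\pm$) your computation would yield $\frac{n_1'}{d_1}+\frac{n_2'}{d_2}-r\in\ZZ$, which is a different assertion. The version that matches the paper's construction in section~\ref{sec:construction} (where $p_1:=n_1$ and $\frac{p_1}{d_1}+\frac{q_2}{d_2}-r=1$ by~\eqref{eq:emes}) is the one with the weight $1$ on the fiber's equation, so that the correction to $S_+^2$ is $\frac{n_i}{d_i}$ on the nose. Once you fix that convention and justify the local correction (e.g.\ by iterating the blow-up formulas~\eqref{eq:intersections_pq} down to a smooth model, where $\bar S_+^2\in\ZZ$), the identity $r=\bar S_+^2+\frac{n_1}{d_1}+\frac{n_2}{d_2}$ does give (3); without it, the last step is an unverified claim whose sign/inverse conventions can silently produce a false statement.
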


\begin{proof}
The first statement is a direct consequence of Lemma~\ref{lema:sym}.
\end{proof}

\begin{thm}\label{thm:class_toric}
For any $r\in\bq_{\geq 0}$, $0\leq n_i<d_i$ ($d_i\in\bn$, $n_i\in\bz_{\geq 0}$, $\gcd(n_i,d_i)=1$) 
there is a unique rational ruled toric surface with these invariants.
\end{thm}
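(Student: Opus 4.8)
The plan is to reduce the statement to a classification of the fans of these toric surfaces. Since $S$ is a complete toric surface whose torus-invariant geometry consists exactly of the two fibers $F_1,F_2$, the two sections $S_+,S_-$ and the four fixed points $F_i\cap S_\pm$ (Definition~\ref{def:trrs}), its fan in $N=\bz^2$ has precisely four rays, namely those of $F_1,F_2,S_+,S_-$, and four two-dimensional cones. The sections are disjoint, so their rays form the opposite (non-adjacent) pair of the fan; being contracted by the ruling $\pi$, they generate the rank-one kernel of the induced lattice projection $\bar p\colon N\to\bz$. First I would choose a basis of $N$ so that this kernel is the vertical axis, so that the ray of $S_+$ is $(0,1)$, that of $S_-$ is $(0,-1)$, and $\bar p$ is the first coordinate; replacing $\bar p$ by $-\bar p$ if necessary, the two fiber rays become $f_1=(x_1,y_1)$ with $x_1>0$ and $f_2=(x_2,y_2)$ with $x_2<0$.

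Next I would read off the invariants from this normal form. The cone $\langle f_1,(0,1)\rangle$ has index $|\det|=x_1$, and the corresponding cyclic quotient has order $x_1$; since by Proposition~\ref{prop:condiciones}(1) this order is $d_1$, one gets $x_1=d_1$ and likewise $x_2=-d_2$. Writing $f_1=(d_1,y_1)$, $f_2=(-d_2,y_2)$ with $\gcd(d_i,y_i)=1$, the standard description of a two-dimensional cyclic quotient identifies $\langle(d_1,y_1),(0,1)\rangle$ as a point of type $\frac{1}{d_1}(1,n_1)$ with $n_1\equiv-y_1\pmod{d_1}$, while the opposite corner $\langle(0,-1),(d_1,y_1)\rangle$ is $\frac{1}{d_1}(1,-n_1)$, in agreement with Proposition~\ref{prop:condiciones}(1); symmetrically $n_2\equiv-y_2\pmod{d_2}$. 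For the self-intersection I would use the linear equivalence $\sum_\rho\langle m,u_\rho\rangle D_\rho=0$ with $m=(0,1)$ and intersect it with $S_+$, using $D\cdot D'=1/|\det(u,u')|$ for adjacent rays and $S_+\cdot S_-=0$; this yields $r=S_+^2=-\frac{y_1}{d_1}-\frac{y_2}{d_2}$. In particular $r\equiv\frac{n_1}{d_1}+\frac{n_2}{d_2}\pmod 1$, which is exactly the compatibility of Proposition~\ref{prop:condiciones}(3); these intersection numbers can alternatively be obtained from~\eqref{eq:intersections_pq}.

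Finally I would turn these formulas into a bijection. The lattice automorphisms preserving the ruling and its vertical kernel are the shears $(x,y)\mapsto(x,y+cx)$, $c\in\bz$ (possibly composed with the sign flips $S_+\leftrightarrow S_-$ and $F_1\leftrightarrow F_2$, which merely relabel the marked data); a shear fixes each residue $y_i\bmod d_i$, hence each $n_i$, and leaves $r=-y_1/d_1-y_2/d_2$ unchanged while sending $(y_1,y_2)\mapsto(y_1+cd_1,y_2-cd_2)$. Hence the isomorphism class of $S$ depends only on $(r;d_1,n_1,d_2,n_2)$, the flip $S_+\leftrightarrow S_-$ being used once to fix the labelling $r\ge 0$; this gives uniqueness. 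Conversely, given admissible invariants (those satisfying Proposition~\ref{prop:condiciones}(3)), I would set $y_i=-n_i+k_id_i$; the requirement $-y_1/d_1-y_2/d_2=r$ reads $k_1+k_2=r-\frac{n_1}{d_1}-\frac{n_2}{d_2}$, solvable in integers precisely because of condition~(3), uniquely up to the shear $(k_1,k_2)\mapsto(k_1+c,k_2-c)$. The rays $(0,\pm 1),(d_1,y_1),(-d_2,y_2)$ lie one in each of the four open sectors cut out by the axes, so they span a complete simplicial fan; its toric surface is normal projective with cyclic quotient singularities, carries the ruling $\bar p$ with irreducible fibers and disjoint sections, and by the computation above has the prescribed invariants, giving existence.

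The step I expect to be most delicate is the rigidity behind uniqueness: justifying that Definition~\ref{def:trrs} really forces the fan into the four-ray normal form above (no further torus-invariant curves, and the ruling equal to the lattice projection with vertical kernel), and pinning down the cyclic-quotient dictionary with the correct signs so that the two corners on each fiber read off as $\frac{1}{d_i}(1,\pm n_i)$ and $r$ with the right sign. Once that dictionary is fixed, the heart of the argument is matching the residual symmetry (the shears) with the ambiguity $r\in\frac{n_1}{d_1}+\frac{n_2}{d_2}+\bz$ permitted by condition~(3); the remaining verifications are routine toric bookkeeping.
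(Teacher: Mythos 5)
Your proof is essentially correct, but it takes a genuinely different route from the paper's. The paper proves Theorem~\ref{thm:class_toric} in two lines by birational geometry: existence comes from the explicit construction of $S_{r,(d_1,p_1),(d_2,q_2)}$ by weighted Nagata transformations starting from $\Sigma_1$ (section~\ref{sec:construction}, Figure~\ref{fig:wnagata}), and uniqueness from the claim that the \emph{inverse} sequence of weighted Nagata transformations carries any such surface back to $\bp^1\times\bp^1$. You instead classify the fans directly: four rays, normal form $(0,\pm1),(d_1,y_1),(-d_2,y_2)$ modulo shears, with the dictionary $n_i\equiv -y_i\pmod{d_i}$ and $r=-\frac{y_1}{d_1}-\frac{y_2}{d_2}$. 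Your computations check out (the index of each corner cone is $d_i$, the relation $S_+-S_-+y_1F_1+y_2F_2\sim0$ intersected with $S_+$ does give $r$, and the shear action $(y_1,y_2)\mapsto(y_1+cd_1,y_2-cd_2)$ fixes exactly the data $(r;d_i,n_i)$), and the delicate points you flag — that Definition~\ref{def:trrs} pins down the four-ray fan and that the ruling is a lattice projection — are in fact built into the definition, which prescribes the orbit structure explicitly. What each approach buys: the paper's argument is shorter and dovetails with the construction it reuses throughout sections~\ref{sec:picardgroup}--\ref{sec:RRTsurfaces-generaltype}, but its uniqueness step is itself only sketched; your fan argument gives a self-contained normal form and an honest bijection, and it also makes visible that the theorem as stated is implicitly restricted to invariants satisfying Proposition~\ref{prop:condiciones}(3) (your equation $k_1+k_2=r-\frac{n_1}{d_1}-\frac{n_2}{d_2}$ has an integer solution precisely then), a hypothesis the paper leaves tacit. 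The only cosmetic slip is the phrase about the four rays lying ``one in each of the four open sectors cut out by the axes'': $f_1,f_2$ need only lie in the open right and left half-planes, which is all that completeness of the fan requires.
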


For the proof, we are going to introduce the concept of weighted Nagata transformations.

\subsection{Weighted Nagata transformations}
\label{sec:Nagata}

Let us consider $\Sigma$ a smooth ruled surface (with base~$\bp^1$ for simplicity)
and let $P\in\Sigma$. The Nagata transformation of $\Sigma$ based on~$P$ is 
a birational map constructed as follows. First, we blow up the point~$P$ to
obtain a surface~$\hat\Sigma$; let
$F\subset\Sigma$ the fiber containing~$P$; note that $(F\cdot F)_{\Sigma}=0$
and hence $(F\cdot F)_{\hat\Sigma}=-1$. By Castelnuovo criterion, we can contract 
$F$ to obtain a new surface~$\tilde{\Sigma}$ which is also ruled. Let us
assume that $\Sigma\cong\Sigma_n$, $n\geq 0$; if $P$ belongs to a section
with non-positive self-intersection then $\tilde\Sigma\cong\Sigma_{n+1}$; such a section is unique if $n>0$ and 
the condition is always satisfied if $n=0$. If the condition is not satisfied,
then $\tilde{\Sigma}\cong\Sigma_{n-1}$. We are going to generalize this notion;
the generalization will work for singular ruled surfaces and it will allow
for smooth ones changes $n\rightarrow n\pm k$, with $k>1$.

\subsection{Construction of the surfaces \texorpdfstring{$S_{(d_1,d_2,p_1,q_2,r)}$}{S(d1,d2,p1,p2,r)}}
\label{sec:construction}

One can produce these surfaces starting from the Hirzebruch surface $\Sigma_1$ and after performing
two weighted Nagata transformations as follows (see Figure~\ref{fig:wnagata}). 

\begin{figure}[ht]
\begin{center}
\scalebox{.75}{
\begin{tikzpicture}[vertice/.style={draw,circle,fill,minimum size=0.2cm,inner sep=0}]
\tikzset{%
  suma/.style args={#1 and #2}{to path={%
 ($(\tikztostart)!-#1!(\tikztotarget)$)--($(\tikztotarget)!-#2!(\tikztostart)$)%
  \tikztonodes}}
}
\begin{scope}[xshift=-1cm]
\coordinate (p4) at (-.2,0);
\coordinate (p3) at (0,2);
\coordinate (p2) at (-1,0);
\coordinate (p1) at (1,0);
\coordinate (X1) at (-1,1);
\node[] at ($(p3)+(-2,1.5)$) {$\mathbb{P}^2$};
\draw[suma=.5 and .5] (p1) to node[shift={(2.5,0)}] {$Z_{(1)}$} (p2);
\draw[suma=.5 and .5,green,thick] (p3) to (p1) node[shift={(.1,-.25)},black] {$Y_{(1)}$};
\draw[suma=.5 and .5,green,thick] (p3) to (p2) node[shift={(-.1,-.25)},black] {$X_{(1)}$};
\draw[suma=.5 and .5] (p3) to (p4) node[shift={(.4,4.3)}] {$F_{(1)}$};
\draw[black,smooth,thick] (-1.7,-.5) to [out=20,in=240] (p2) to [out=240,in=180] (0,-.25) to [out=0,in=0] (0,-.4) to [out=200,in=20] (-.5,-.5) to [out=180,in=200] (-.5,-.6) to [out=-10,in=300] (p1) to [out=300,in=180] (1.8,-.5) node [right] {$C_{(((e-\rho)d_1d_2/d)^2)}$};
\node[vertice,blue] at (p3) {};
\end{scope}

\draw[arrows=<-] (1.8,2) -- node[above] {\small $(1,1)$-blow up at $X\cap Y$} (5.4,2) ;

\begin{scope}[xshift=9cm]
\coordinate (X) at (0,2);
\coordinate (Z) at (-1,0);
\coordinate (Y) at (2.5,0);
\coordinate (X1) at (-1,1);
\coordinate (XZ) at ($(X)+(Z)$);
\coordinate (XY) at ($(X)+(Y)$);
\draw[suma=.5 and .5] (Z) to (Y) node[right] {$Z_{(1)}$};
\draw[suma=.5 and .5] (.5,2) to (.5,-.5) node[above=5cm] {$F_{(0)}$};
\draw[suma=.5 and .5,green,thick] (XY) to (Y) node[above=4cm,black] {$Y_{(0)}$};
\draw[suma=.5 and .5,green,thick] (XZ) to (Z) node[above=4cm,black] {$X_{(0)}$};
\draw[suma=.5 and .5,blue,thick] (XZ) to (XY) node[right] {$E_{(-1)}$};
\node[vertice,red] at (Y) {};
\node[vertice,red] at (Z) {};
\draw[black,smooth,thick] (Z)+(-.5,-.5) to [out=20,in=270] (Z) to [out=270,in=180] (1,-.35) to [out=0,in=0] (1,-.5) to [out=200,in=20] (0,-.6) to [out=180,in=200] (0,-.7) to [out=-10,in=270] (Y) to [out=270,in=180] (3,-.5) node [shift={(.6,-.3)}] {$C_{(((e-\rho)d_1d_2/d)^2)}$};

\node[] at ($(X)+(-2.5,1.5)$) {$\Sigma_1$};

\end{scope}

\node[left=-10pt] at ($(8,-3)$) {\small $({\color{green}{d_1}},{\color{black}{p_1}})$-blow up};
\node[left=-10pt] at ($(7.8,-3.4)$) {at $X\cap Z$};
\draw[arrows=<-] (8.6,-2) -- (8.6,-4.5);

\node[left=-0pt] at ($(14.6,-3)$) {\small $({\color{green}{d_2}},{\color{black}{q_2}})$-blow up};
\node[left=-0pt] at ($(13.8,-3.4)$) {at $Y\cap Z$};
\draw[arrows=<-] (12.1,-2) -- (12.1,-4.5);

\begin{scope}[xshift=9cm,yshift=-9cm]
\coordinate (X) at (0,2);
\coordinate (Z) at (-1,-1);
\coordinate (Y) at (2.5,-1);
\coordinate (Y1) at (3.55,.4);
\coordinate (X1) at (-2.35,.5);
\coordinate (XZ) at (-1,2);
\coordinate (XY) at (2.5,2);
\draw[suma=.2 and .2] (Z) to (Y) node[right] {$Z_{(-r)}$};
\draw[suma=.5 and .5] (.6,2) to (.6,-1) node[above=6cm] {$F_{(0)}$};
\draw[suma=.5 and .5,green,thick] (XZ) to (-2,.9) node[shift={(1.8,2.5)},black] {$X_{(-\frac{d_1}{p_1})}$};
\draw[suma=.5 and .5,red,thick] (Z) to (-2,.1) node[shift={(1.8,-2.6)}] {$E_{X(-\frac{1}{d_1p_1})}$};
\draw[suma=.5 and .5,red,thick] (Y) to (3.3,.1) node[shift={(-1.6,-2.6)}] {$E_{Y(-\frac{1}{d_2q_2})}$};
\draw[suma=.5 and .5,green,thick] (XY) to (3.3,.8) node[shift={(-1.6,2.7)},black] {$Y_{(-\frac{d_2}{q_2})}$};
\draw[suma=.2 and .2,blue,thick] (XZ) to (XY) node[right] {$E_{(-1)}$};
\node[vertice] at (Y) {} node[shift={(-.2,-.7)}] {$\frac{1}{d_1}({\color{red}{-1}},{\color{black}{p_1}})$};
\node[vertice] at (Z) {} node[shift={(1.8,-.7)}] {$\frac{1}{d_2}({\color{red}{-1}},{\color{black}{q_2}})$};
\node[vertice] at (X1) {} node[shift={(4.4,.5)}] {$\frac{1}{q_2}({\color{green}{d_2}},{\color{red}{-1}})$};
\node[vertice] at (Y1) {} node[shift={(-1.5,.5)}] {$\frac{1}{p_1}({\color{green}{d_1}},{\color{red}{-1}})$};
\draw[black,smooth,thick] (-2,-.5) to [out=20,in=150] (1,1) to [out=-10,in=10] (1,.7) to [out=200,in=40] (-.3,.5) to [out=220,in=140] (-.3,0) to [out=-40,in=180] (1.5,.2)  to [out=0,in=140] (3.5,-.4) node [shift={(.4,-.1)}] {$C_{(0)}$};

\node[] at ($(X)+(-2.5,1.5)$) {$\widehat\Sigma_1$};

\end{scope}

\draw[arrows=<-] (2,-8.5) --node[above] {\small $({\color{red}{1}},{\color{blue}{p_1}})$-blow up at ${\color{red}{E_X}}\cap {\color{blue}{E}}$}
node[below] {\small $({\color{red}{1}},{\color{blue}{q_2}})$-blow up at ${\color{red}{E_Y}}\cap {\color{blue}{E}}$} (6,-8.5);

\begin{scope}[xshift=-1.5cm,yshift=-9cm]
\coordinate (X) at (0,2);
\coordinate (Z) at (-1,-1);
\coordinate (Y) at (2.5,-1);
\coordinate (X1) at (-1,1);
\coordinate (XZ) at (-1,2);
\coordinate (XY) at (2.5,2);
\draw[suma=.5 and .2] (Z) to (Y) node[right] {$Z_{(-r)}$};
\draw[suma=.1 and .1] (.55,-1) to (.55,2.5) node[above] {$F_{(0)}$};
\draw[suma=.1 and .1,red,thick] (Y) to (XY) node[above=.5cm] {$E_{Y(0)}$};
\draw[suma=.1 and .1,red,thick] (Z) to (XZ) node[above=.5cm] {$E_{X(0)}$};
\draw[suma=.5 and .2,blue,thick] (XZ) to (XY) node[right] {$E_{(r)}$};
\node[vertice,green] at (XY) {} node[shift={(-1.65,2.5)}] {$\frac{1}{d_1}({\color{red}{1}},{\color{blue}{p_1}})$};
\node[vertice,green] at (XZ) {} node[shift={(2.5,2.5)}] {$\frac{1}{d_2}({\color{red}{1}},{\color{blue}{q_2}})$};
\node[vertice] at (Y) {} node[shift={(-1.2,-1.5)}] {$\frac{1}{d_1}({\color{red}{-1}},{\color{black}{p_1}})=\frac{1}{d_1}({\color{red}{1}},{\color{black}{q_1}})$};
\node[vertice] at (Z) {} node[shift={(2.5,-1.5)}] {$\frac{1}{d_2}({\color{red}{-1}},{\color{black}{q_2}})=\frac{1}{d_2}({\color{red}{1}},{\color{black}{p_2}})$};
\draw[black,smooth,thick] (-2,.5) to [out=20,in=150] (1,1.2) to [out=-10,in=10] (1,.8) to [out=200,in=40] (-.3,.5) to [out=220,in=140] (-.3,0) to [out=-40,in=180] (1.5,.2)  to [out=0,in=180] (3,0) node [shift={(.4,-.1)}] {$C_{(0)}$};
\node[] at ($(X)+(-2.5,1.5)$) {$S$};
\end{scope}
\end{tikzpicture}
}
 \caption{Construction of $S_{r,(d_1,p_1),(d_2,q_2)}$}
\label{fig:wnagata}
\end{center}
\end{figure}

As a brief explanation of Figure~\ref{fig:wnagata}, we have considered two positive integers $p_1,q_2\in \ZZ_{>0}$ such that 
\begin{equation}
\label{eq:emes}
\frac{p_1}{d_1}+\frac{q_2}{d_2}-r=1.
\end{equation}
Hence,
\begin{equation}
\label{eq:pq0} 
d_2 p_1+d_1 q_2=d_1 d_2+rd_1d_2\geq d_1d_2.
\end{equation}

In order to find $p_1$ and $q_2$ let us start with the equality $\frac{n_1}{d_1}+\frac{n_2}{d_2}-r=1-k\in \ZZ$ 
from Proposition~\ref{prop:condiciones} where $0<n_i<d_i$ and $r\geq 0$. Under these conditions $k\geq 0$. Then, for instance, $p_1:=n_1$ and 
$q_2:=n_2+kd_2>0$ satisfy~\eqref{eq:emes}.

The purpose is to construct the rational ruled toric surface $S$ associated with $d_i,n_i,r$ in Proposition~\ref{prop:condiciones}.
Self-intersection of divisors is shown in parenthesis. 
Also, in order to simplify notation, each blow-up and each singular point are color coded in order to specify the direction.
For instance, $\frac{1}{d}({\color{red}{a}},{\color{blue}{b}})$ represents a cyclic singular point of order $d$ whose action 
in local coordinates is given by $(x,y)\mapsto (\zeta^a x,\zeta^b)$ where $\{x=0\}$ (resp. $\{y=0\}$) is the local equation the 
divisor colored in red (resp. blue) --\,see Notation~\ref{ntc:cyclic}. Analogously, a weighted blow-up of type 
$({\color{red}{a}},{\color{blue}{b}})$ means \emph{relative} to $A$, $B$ where $A$ (resp. $B$) is the divisor colored in red (resp. blue).
The resulting self-intersections after blow-ups and the types of the singular points on exceptional divisors are a consequence of the 
discussion in section~\ref{sec:blowup-pq} and the intersections formulas summarized in~\eqref{eq:intersections_pq}.

\begin{obs}
\label{rem:special}
If $r=\frac{\rho}{e}$ and $\frac{d_1}{d_2}=\frac{d'_1}{d'_2}$ irreducible fractions, where $d'_i=\frac{d_i}{d}$ and 
$d:=\gcd(d_1,d_2)$, then $d'_1d'_2|e$. 
If $r\in \ZZ$, then $d=d_1=d_2$ and $n_1+n_2=d$ and $S$ can be obtained
as the quotient by $\mu_{d}$ of a smooth ruled surface which is $\bp^1\times\bp^1$ if $r=0$; the action
of $\mu_d$ in an affine chart isomorphic to $\CC^2$ is given 
by $\zeta\cdot(x,y):=(\zeta x,\zeta^{n_1} y)$. When $r=0$ the surface will be called \emph{biruled} (since two rulings exist); otherwise the surface is called \emph{uniruled}.
\end{obs}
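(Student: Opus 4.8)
The plan is to dispatch the three assertions in order, using the single arithmetic constraint from Proposition~\ref{prop:condiciones}(3) as input and then, in the integral case, the uniqueness of Theorem~\ref{thm:class_toric} to identify the quotient. For the divisibility $d'_1d'_2\mid e$ I would begin from Proposition~\ref{prop:condiciones}(3), which provides $k\in\ZZ$ with $r=\frac{n_1}{d_1}+\frac{n_2}{d_2}-k$. Clearing denominators through $d_1d_2=d^2d'_1d'_2$ rewrites this as
\[
r=\frac{M}{d\,d'_1d'_2},\qquad M:=n_1d'_2+n_2d'_1-k\,d\,d'_1d'_2.
\]
The crux is that $\gcd(M,d'_1d'_2)=1$: modulo $d'_1$ one has $M\equiv n_1d'_2$, and $\gcd(n_1,d'_1)=1$ (since $\gcd(n_1,d_1)=1$ and $d'_1\mid d_1$) together with $\gcd(d'_1,d'_2)=1$ gives $\gcd(M,d'_1)=1$; symmetrically $\gcd(M,d'_2)=1$. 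Therefore, on passing to lowest terms, the factor $d'_1d'_2$ cannot cancel and the only cancellation is against $d$, so $e=\frac{d}{\gcd(M,d)}\,d'_1d'_2$ and $d'_1d'_2\mid e$.

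\emph{The case $r\in\ZZ$.} Now $e=1$, so the divisibility just proved forces $d'_1d'_2=1$, i.e. $d'_1=d'_2=1$ and $d_1=d_2=d$. Substituting back into Proposition~\ref{prop:condiciones}(3) gives $d\mid n_1+n_2$; when $d>1$, $\gcd(n_i,d)=1$ rules out $n_i=0$, so $0<n_i<d$ and the only value of $n_1+n_2$ in $(0,2d)$ divisible by $d$ is $n_1+n_2=d$ (the case $d=1$ is the smooth Hirzebruch surface, where the quotient claim is trivial). In particular $n_2\equiv-n_1\pmod d$, so the two points on each fiber carry the dual types $\frac{1}{d}(1,n_1)$ and $\frac{1}{d}(1,-n_1)$.

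\emph{The quotient description.} I would produce $S$ explicitly as a toric quotient and then invoke uniqueness. Take the smooth Hirzebruch surface $Y=\Sigma_{dr}$ (so $Y=\bp^1\times\bp^1$ when $r=0$) with its toric structure, and let $\mu_d$ act as the subgroup of its torus given by $\zeta\cdot(x,y)=(\zeta x,\zeta^{n_1}y)$ in the affine chart about the fixed point $F_1\cap S_+$, where $x$ is the section coordinate and $y$ the fiber coordinate. Since the action is through the torus, the quotient $q\colon Y\to Y/\mu_d=:S'$ is again toric; it preserves the ruling and descends to a cyclic action of order $d$ on the base $\bp^1$ with quotient $\bp^1$, so $S'$ is a rational ruled toric surface in the sense of Definition~\ref{def:trrs}, with two marked fibers and two disjoint marked sections inherited from $Y$. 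Reading off the quotient singularity in each of the four affine charts, and using the ordering convention of Proposition~\ref{prop:condiciones}(1) (first slot for the section, second for the fiber), gives types $\frac{1}{d}(1,n_1),\frac{1}{d}(1,-n_1)$ on $q(F_1)$ and $\frac{1}{d}(1,-n_1)=\frac{1}{d}(1,n_2),\frac{1}{d}(1,n_1)=\frac{1}{d}(1,-n_2)$ on $q(F_2)$, i.e. exactly the singularity data of $S$. Finally the negative section of $\Sigma_{dr}$ has self-intersection $-dr$ and $q$ is \'etale in codimension one along it, so the projection formula $q^*D\cdot q^*D=d\,(D\cdot D)$ yields self-intersection $-r$ for its image; hence $S'$ also has invariant $r$. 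By Theorem~\ref{thm:class_toric}, $S'\cong S$, which is the asserted quotient description with the stated action.

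The step I expect to be most delicate is the singularity bookkeeping in the last paragraph: matching the four types requires respecting the ordered pair $\frac{1}{d}(1,\pm n_i)$ attached to (section, fiber), because a careless normalization to leading coefficient $1$ replaces $n_1$ by its inverse modulo $d$ rather than by $-n_1$ and would break the match. Concretely, in lattice terms $S'$ is obtained from $\Sigma_{dr}$ by adjoining $\frac{1}{d}(n_1,1)$ to the lattice, and one must verify the type in all four cones of the fan with the correct ordered bases; granting this, the remaining arithmetic and self-intersection computations are routine.
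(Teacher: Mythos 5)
You have this right, and there is in fact nothing in the paper to compare against: Remark~\ref{rem:special} is stated with no justification at all, so your argument is a reconstruction of an omitted proof rather than an alternative to a written one. Both halves are correct. The arithmetic half is the natural argument: writing $r=M/(d\,d'_1d'_2)$ with $M=n_1d'_2+n_2d'_1-k\,d\,d'_1d'_2$ and using $\gcd(n_i,d'_i)=1$ together with $\gcd(d'_1,d'_2)=1$ to get $\gcd(M,d'_1d'_2)=1$ is exactly what forces $d'_1d'_2\mid e$, and it degenerates gracefully when $M=0$ (that case forces $d'_1=d'_2=1$). For the quotient description, your route --- realize the candidate as the toric surface on the fan of $\Sigma_{dr}$ with lattice refined by $\frac{1}{d}(n_1,1)$, check the four cones and the section self-intersections, then invoke the uniqueness of Theorem~\ref{thm:class_toric} --- is sound and non-circular, since the paper's proof of that theorem (via Nagata transformations) makes no use of the remark; it is also clearly the paper's intended picture, as the follow-up remark describing $S_{(d,p)}$ as a quotient of $\bp^1\times\bp^1$ shows. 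The two delicate points you isolate do work out when one carries out the four-cone check: refining by $\frac{1}{d}(n_1,1)$ (weight $n_1$ against the fiber ray, weight $1$ against the section ray) yields the types $\frac{1}{d}(1,\pm n_1)$ in the normalization of Proposition~\ref{prop:condiciones}(1), whereas the careless refinement $\frac{1}{d}(1,n_1)$ would yield $n_1^{-1}\bmod d$; and although the resulting assignment of $+n_1$ and $-n_1$ to the sections $S_\pm$ comes out opposite to one of the two possible labelings of the marked fibers, the relation $n_2\equiv-n_1\pmod{d}$ means this discrepancy is absorbed by interchanging $F_1$ and $F_2$, so the invariants match and uniqueness applies. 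Only cosmetic caveats remain: the phrase ``$x$ is the section coordinate'' is ambiguous and must be read as ``the local equation of the section'' (your lattice formulation pins this down correctly), and for $d=1$ the equality $n_1+n_2=d$ holds only modulo $d$, as you note.
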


Even though it is not necessary for the construction of $S$, note that the strict transform of the rational curve 
$C=\{x^{ep_1d'_2}y^{eq_2d'_1}-z^{(e-\rho)d_1d_2/d}\}\subset\PP^2$ --\,see~\eqref{eq:pq0} and Remark~\ref{rem:special}\,--
is a multi-section of the ruling with zero self-intersection and intersecting the smooth fiber $F$ at $(e-\rho)d_1d_2/d$ points.

\begin{obs}
Conceptually, the surface $S_{(d,p)}$ can also be obtained as the quotient of $\PP^1\times\PP^1$ by the cyclic action of the 
multiplicative group $\mu_d$ as $\zeta\cdot ([x:y],[s:t])=([x:\zeta^p y],[s:\zeta^q t])$. However, the construction presented 
here will be more convenient for calculation purposes.
\end{obs}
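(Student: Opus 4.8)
The plan is to realize the quotient $Q := (\PP^1\times\PP^1)/\mu_d$ as a rational ruled toric surface, compute its invariants $(d_1,d_2,n_1,n_2,r)$, and match them with those of $S_{(d,p)}$; the identification then follows from the uniqueness in Theorem~\ref{thm:class_toric}. First I would check that $Q$ is a normal projective surface with quotient singularities: it is the quotient of a smooth projective surface by the finite cyclic group $\mu_d$, so normality, projectivity and the quotient-singularity condition are automatic, and the only possible singular points are the images of the four torus-fixed points $([1:0],[1:0])$, $([1:0],[0:1])$, $([0:1],[1:0])$, $([0:1],[0:1])$, every free orbit mapping to a smooth point.

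Next I would produce the ruled structure. The action descends along the first projection $\mathrm{pr}_1:\PP^1\times\PP^1\to\PP^1$ to the action $[x:y]\mapsto[x:\zeta^p y]$ on the base, whose quotient is again $\PP^1$ (a weighted projective line, $\cong\PP^1$ as $\gcd(p,d)=1$); hence $\mathrm{pr}_1$ induces a morphism $\pi:Q\to\PP^1$. Over a base point with trivial stabilizer the $d$ fibers of $\mathrm{pr}_1$ are permuted freely, so the generic fiber of $\pi$ is $\cong\PP^1$, and every fiber is irreducible, the two special fibers $F_1,F_2$ being the quotients $(\{[1:0]\}\times\PP^1)/\mu_d$ and $(\{[0:1]\}\times\PP^1)/\mu_d$. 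Thus $\pi$ is a minimal rational ruled fibration. Marking $F_1,F_2$ together with the images $S_+,S_-$ of the invariant horizontal curves $\PP^1\times\{[1:0]\}$ and $\PP^1\times\{[0:1]\}$ (disjoint, each meeting every fiber once), and taking as torus the image of $(\CC^*)^2/\mu_d$, exhibits $Q$ as a rational ruled toric surface in the sense of Definition~\ref{def:trrs}.

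The main computation is the determination of the invariants. Writing $\phi:\PP^1\times\PP^1\to Q$ for the quotient map (finite of degree $d$), the preimage $\phi^{-1}(S_+)=\PP^1\times\{[1:0]\}$ is reduced because $\mu_d$ acts freely along it, so $\phi^*S_+=\PP^1\times\{[1:0]\}$ and the projection formula gives $d\,(S_+^2)=(\phi^*S_+)^2=(\PP^1\times\{[1:0]\})^2=0$; hence $S_+^2=0=-S_-^2$ and $r=0$, so $Q$ is biruled. For the singular points I would read off the local actions in the four affine charts: at $([1:0],[1:0])$, in coordinates $(u,v)=(y/x,t/s)$ the action is $(u,v)\mapsto(\zeta^p u,\zeta^q v)$, where $v=0$ is the local equation of $S_+$ and $u=0$ that of $F_1$; normalizing the section exponent to $1$ as prescribed by Notation~\ref{ntc:cyclic} and Proposition~\ref{prop:condiciones}(1) turns this into $\frac{1}{d}(1_{S_+},(pq^{-1})_{F_1})$. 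Running over all four corners the same way gives types $\frac{1}{d}(1,\pm n_1)$ on $F_1$ and $\frac{1}{d}(1,\pm n_2)$ on $F_2$ with $n_1\equiv pq^{-1}$ and $n_2\equiv -pq^{-1}\pmod d$; in particular $n_1+n_2\equiv0\pmod d$, recovering the constraint of Proposition~\ref{prop:condiciones}(3) for $r=0$.

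Finally I would match and conclude. By construction $S_{(d,p)}$ is the biruled surface with $d_1=d_2=d$, $r=0$ and $n_1=p$, $n_2=d-p$, whose four cyclic points are $\frac{1}{d}(1,\pm p)$ and $\frac{1}{d}(1,\mp p)$; the exponents $p,q$ in the Observation are chosen precisely so that the normalized invariant above equals $p$, making the invariants of $Q$ coincide with those of $S_{(d,p)}$. Since by Theorem~\ref{thm:class_toric} a rational ruled toric surface is determined up to isomorphism by $(d_1,d_2,n_1,n_2,r)$, we obtain $Q\cong S_{(d,p)}$. This is consistent with Remark~\ref{rem:special}, which already presents the biruled $S$ as $(\PP^1\times\PP^1)/\mu_d$ with affine action $(\zeta x,\zeta^{n_1}y)$, the Observation being the homogeneous globalization of that action. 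I expect the main obstacle to be the bookkeeping under Notation~\ref{ntc:cyclic}: keeping the marked divisors (section versus fiber) attached to the correct local coordinate through all four charts, and choosing the primitive root so that the resulting invariant equals $p$ on the nose rather than a unit multiple of it.
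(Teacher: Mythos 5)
Your overall strategy (realize $Q=(\PP^1\times\PP^1)/\mu_d$ as a rational ruled toric surface, compute its invariants, and invoke the uniqueness of Theorem~\ref{thm:class_toric}) is the natural one, and everything up through the local computation is correct: normality, the ruling induced by the first projection, $r=0$ via the projection formula, and the marked types $\frac{1}{d}(1,\pm pq^{-1})$ at the four corners. The gap is the matching step, which you assert instead of check. In the paper's notation the only available meaning of $q$ is $q=d-p$ (see section~\ref{sec:RRTsurfaces-specialtype}, or \eqref{eq:emes} with $r=0$ and $d_1=d_2=d$), so $pq^{-1}\equiv p(-p)^{-1}\equiv -1 \pmod{d}$: the quotient by $([x:\zeta^p y],[s:\zeta^q t])$ has four singular points of type $\frac{1}{d}(1,\pm 1)$, i.e.\ ordinary $A_{d-1}$ points, whereas $S_{(d,p)}$ has points of type $\frac{1}{d}(1,\pm p)$. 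By your own uniqueness argument these two surfaces are isomorphic only when $p\equiv\pm 1\pmod{d}$. Hence the sentence ``the exponents $p,q$ in the Observation are chosen precisely so that the normalized invariant above equals $p$'' is false, and it is exactly the point at issue: carried out honestly, your method does not prove the Observation but shows that, as literally written, its exponents are a misprint. The correct action is the homogeneous form of Remark~\ref{rem:special}, namely $([x:\zeta y],[s:\zeta^{p} t])$ (exponents $(1,p)$, up to the obvious symmetries $(\pm 1,\pm p)$, $(\pm p,\pm 1)$, or rescaling both by a unit); for that action your computation gives $n_1\equiv p$ and the rest of your argument closes correctly.

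The escape hatch you anticipate in your final sentence --- ``choosing the primitive root so that the resulting invariant equals $p$ on the nose rather than a unit multiple of it'' --- does not exist. Replacing $\zeta$ by $\zeta^c$ with $\gcd(c,d)=1$ multiplies \emph{both} local weights by $c$, so the normalized marked type $\frac{1}{d}(1,\,\cdot\,)$ attached to the ordered pair (section equation, fiber equation) is independent of that choice: $pq^{-1}\bmod d$ is an honest invariant of the action. Equivalently, the subgroup of automorphisms generated by $(\zeta^p,\zeta^{d-p})$ is $\{(\eta,\eta^{-1})\mid \eta\in\mu_d\}$ for any choice of generator, and it differs from the subgroup $\{(\eta,\eta^{p})\mid \eta\in\mu_d\}$ underlying Remark~\ref{rem:special} unless $p\equiv\pm 1$. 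The only remaining flexibility is the swap $\frac{1}{d}(1,m)\cong\frac{1}{d}(1,m^{-1})$ coming from exchanging the two marked divisors, which cannot help here since $-1$ is its own inverse. So the danger you flagged is real, but it is not bookkeeping to be arranged: it is a genuine discrepancy between the Observation's exponents and the surface $S_{(d,p)}$, and a proof must either correct the exponents or restrict to $p\equiv\pm1\pmod d$.
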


\begin{proof}[Proof of Theorem{\rm~\ref{thm:class_toric}}]
Let us start with $\bp^1\times\bp^1$. A sequence of weighted Nagata transformations yield the desired
ruled toric surface. Moreover, from any ruled toric surface, the \emph{inverse} sequence 
of Nagata transformations produces $\bp^1\times\bp^1$.
\end{proof}

\subsection{The Picard group of \texorpdfstring{$S$}{S}}
\label{sec:picardgroup}

Recall that the Picard group $\Pic(S)$ of a surface $S$ is defined as the divisor class group factored out by linear equivalence, 
that is, $D_1\sim D_2$ if $D_1-D_2=\supp (f)$ for a rational global function $f\in \bc(S)$. Given a list of invariants $I=(d_1,d_2,p_1,q_2,r)$ 
we will follow section~\ref{sec:construction} to construct the rational ruled toric $S_I$ and use Propositions~\ref{prop:pic1} and \ref{prop:pic2} 
to describe a presentation of the Picard groups of $S_I$ and all the intermediate surfaces obtained after each blow-up.
\begin{equation}
\label{eq:Picard_S}
\array{rcl}
\Pic(\PP^2)&=&\langle X,Y,Z,F: X=Y=Z=F\rangle\cong \ZZ Z.\\
\Pic(\Sigma_1)&=&\langle X,Y,Z,F,E: X+E=Y+E=Z=F+E\rangle\cong \ZZ Z\times\ZZ E.\\
\Pic(\widehat\Sigma_1)&=&\left\langle X,E_X,Y,E_Y,Z,F,E: 
\begin{cases} X+d_1E_X=Y+d_2E_Y=F, \\Z+p_1E_X+q_2E_Y=F+E\end{cases}\right\rangle\\
&\cong &\ZZ Z\times\ZZ E\times\ZZ E_X\times\ZZ E_Y.\\
\Pic(S)&=&\left\langle E_X,E_Y,Z,F,E: \begin{cases} d_1E_X=d_2E_Y=F, \\Z+p_1E_X+q_2E_Y=F+E\end{cases}\right\rangle\\
&\cong& \langle Z,F,E_X,E_Y\mid F=d_1E_X=d_2E_Y\rangle \cong \ZZ\ Z \times \ZZ\ E_X \times \frac{\ZZ}{d\ZZ}\ T,
\endarray
\end{equation}
where $T=\frac{d_1}{d}E_X-\frac{d_2}{d}E_Y$ and $d:=\gcd(d_1,d_2)$. 
The intersection matrix with respect to the generating system $\{ Z,F,E_X,E_Y\}$ is given as
$$M_S=
\left[
\begin{matrix}
r & 1 & \frac{1}{d_1} & \frac{1}{d_2}\\
1 & 0 & 0 & 0 \\
\frac{1}{d_1} & 0 & 0 & 0\\
\frac{1}{d_2} & 0 & 0 & 0
\end{matrix}
\right]
$$
and the kernel of this bilinear form is generated by $F-d_1E_X$, $F-d_2E_Y$, and the torsion divisor $T$ which is also numerically equivalent to zero. 
In particular $\frac{d_1}{d}E_X$ and $\frac{d_2}{d}E_Y$ are numerically equivalent, however they are not linearly equivalent unless~$d=1$.

Moreover, let $\Gamma$ be the free subgroup generated by $Z,F$; then, the quotient of the Picard group by $\Gamma$ is isomorphic
to the direct product $\bz/d_1\times\bz/d_2$ generated by the classes of $E_X,E_Y$. Hence any element $D$ of the Picard group of~$S$ can be uniquely 
represented as
\[
D\sim a Z+b F+\alpha E_X+\beta E_Y,\quad a,b,\alpha,\beta\in\bz,\quad 0\leq\alpha<d_1,\quad 0\leq\beta<d_2.
\]

\subsection{The canonical cycle of \texorpdfstring{$S$}{S}}

Following the discussion in section~\ref{sec:ZK} the canonical cycle is numerically equivalent to 
\begin{equation}
\label{eq_Knum}
Z_K\ \simmap{\text{{\tiny{num}}}} \ 2Z+rF+E_X+E_Y.
\end{equation}

Using Riemann-Roch one can calculate $\chi(\cO_S(D)) := \chi(S,\cO_S(D))$ as follows

\begin{lema}
\label{lemma:chigen}
Let $D\sim a Z+bF+\alpha E_X+\beta E_Y\in \Pic(S)$, where $0\leq \alpha<d_1$ and $0\leq \beta<d_2$, then
$$
\array{rcl}
\chi(\cO_S(D))&=&
1+\frac{1}{2}a(b + r) + \left(b + \frac{\alpha}{d_1} + \frac{\beta}{d_2} - ar\right)\frac{(a + 2)}{2} + \frac{a(\alpha + 1)}{2d_1} + \frac{a(\beta + 1)}{2d_2}\\
&&-\Delta_{\frac{1}{d_1}(1,q_1)}(-\alpha-aq_1)-\Delta_{\frac{1}{d_2}(1,p_2)}(-\beta-ap_2)\\
&&-\Delta_{\frac{1}{d_1}(1,p_1)}(-\alpha)-\Delta_{\frac{1}{d_2}(1,q_2)}(-\beta).
\endarray
$$
\end{lema}

\begin{proof}
By the discussion in the preceding section
$$
D(D+Z_K)\!=\!(a,b,\alpha,\beta) M_S
\left(
\begin{matrix}
a+2\\
b+r\\
\alpha+1\\
\beta+1
\end{matrix}
\right)
\!=\!
a(b + r) + \left(b + \frac{\alpha}{d_1} + \frac{\beta}{d_2} - ar\right)(a + 2) + \frac{a(\alpha + 1)}{d_1} + \frac{a(\beta + 1)}{d_2}
$$
The rest is an immediate consequence of the Riemann-Roch formula~\eqref{eq:RR}.
\end{proof}

\section{Rational biruled toric surfaces}
\label{sec:RRTsurfaces-specialtype}

The purpose of this section is to consider the special case $r=0$, since this case has a special 
behavior in terms of cohomology of line bundles. The main result of this section is Theorem~\ref{thm:main}. It states that 
if $r=0$, then all cohomology of line bundles is concentrated in only one degree, all degrees are possible, and 
there are precise formulas for these dimensions as they coincide with the Euler characteristic of the line bundle $\cO(D)$, 
calculated in Lemma~\ref{lemma:chigen}. Now we want to study the global sections $H^0(S,\cO_S(D))$.

Recall from Remark~\ref{rem:special} that any rational biruled toric surface $S_I$ can be characterized by 
two numbers $I=(d,p)$, so that $d=d_1=d_2$, $p=p_1=p_2$, $q=q_1=q_2=d-p$, $r=0$.
Let us consider $D=a Z+b F + \alpha E_X + \beta E_Y\in \Pic(S)$, where $0\leq \alpha,\beta<d$. 
Note that $\hat\rho^*(D)=a Z+b F+\alpha E_X+\beta E_Y +\frac{\alpha}{d} X+\frac{\beta}{d} Y$ and hence
$$\lfloor\hat\rho^*(D)\rfloor=a Z+b F+\alpha E_X+\beta E_Y=(a+b)Z-b E+(\alpha+bp) E_X+(\beta+bq) E_Y\in \Pic(\hat \Sigma_1).$$
On the other hand 
$$\rho^*((a+b)Z)=(a+b)\rho^*(Z)=(a+b)(Z+p E_X+q E_Y)\in \Pic(\hat \Sigma_1).$$
Therefore
$$
\lfloor\hat\rho^*(D)\rfloor=(a+b)\rho^*(Z)-b E-(ap-\alpha)E_X-(aq-\beta)E_Y.
$$
Using the projection formula
\begin{equation}
\label{eq:h0D}
\array{c}
H^0(S,\cO_S(D))
=H^0(\hat\Sigma_1,\cO(\lfloor\hat\rho^*(D)\rfloor))\\ \\
=\rho_*H^0(\hat\Sigma_1,\cO_{\hat\Sigma_1}(\rho^*((a+b)Z)-b E-(ap-\alpha)E_X-(aq-\beta)E_Y))\\ \\
=\left\{ h\in H^0(\PP^2,\cO_{\PP^2}((a+b)Z)) 
\left|
\array{l}
\mult_{(1,1)}h(X,Y,1)\geq b,\\ 
\mult_{(d,q)}h(1,Y,Z)\geq aq-\beta, \\
\mult_{(d,p)}h(X,1,Z)\geq ap-\alpha 
\endarray
\right.
\right\}\\ \\
=\langle X^uY^vZ^{a+b-u-v}\in \CC[X,Y,Z]_{a+b}\mid 
u+v\geq b,\quad -pb-\alpha\leq qu-pv\leq qb+\beta
\rangle
\endarray
\end{equation}

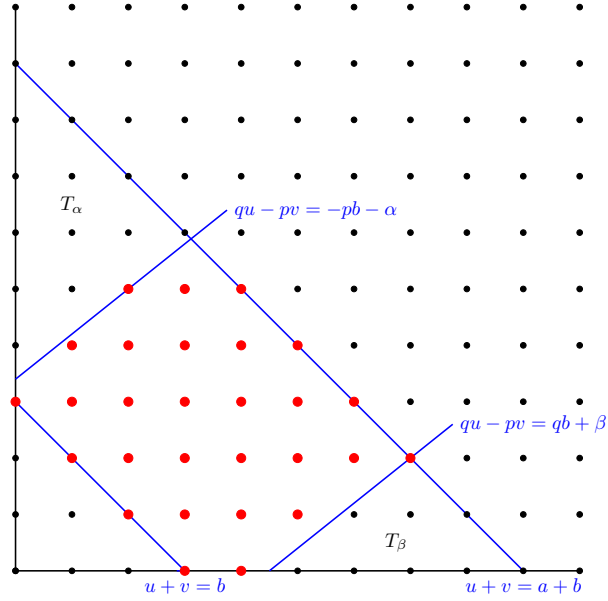
\begin{figure}[ht]
\begin{center}
\scalebox{.75}{
\begin{tikzpicture}
\draw[black,smooth,thick] (0,10) -- (0,0) -- (10,0);
\draw[blue,smooth,thick] (0,9) -- (9,0) node [blue,below] {$u+v=a+b$};
\draw[blue,smooth,thick] (0,3) -- (3,0) node [blue,below] {$u+v=b$};
\draw[blue,smooth,thick] (2+6/7,0) -- (3,0);
\draw[fill] circle [radius=.05] (0,0);
\foreach \x in {0,...,10} 
\foreach \y in {0,...,10} 
    \draw[fill] (\x,\y) circle (0.05cm);

\draw[smooth, color=blue, domain=0:13, thick, line cap=butt, samples=400] plot (\x/4+3+6/4,\x/5) node [blue,right] {$qu-pv=qb+\beta$};
\draw[smooth, color=blue, domain=0:15, thick, line cap=butt, samples=400] plot (\x/4,\x/5+3+2/5) node [blue,right] {$qu-pv=-pb-\alpha$};

\foreach \x in {(0,3),(1,2),(1,3),(1,4),(3,0),(3,1),(3,2),(3,3),(3,4),(3,5),(2,1),(2,2),(2,3),(2,4),(2,5),(4,0),(4,1),(4,2),(4,3),(4,4),(4,5),(5,1),(5,2),(5,3),(5,4),(6,2),(6,3),(7,2)}
    \draw[fill,red] \x circle (0.08cm);
    
\node at (6.75,0.5) {$T_\beta$};
\node at (1,6.5) {$T_\alpha$};
\end{tikzpicture}
}
 \caption{Monomials in $H^0(\cO_S(D))$, $D=6Z+3F+2 E_X+6 E_Y$, $(d,p,q)=(9,5,4)$.}
\label{fig:h0}
\end{center}
\end{figure}

A straightforward calculation --\,see Figure~\ref{fig:h0}\,-- shows that
\begin{equation}
\label{eq:h0D1}
h^0(S,\cO_S(D))=
\begin{cases}
\binom{a+b+2}{2}-\binom{b+1}{2}-\# T_\beta-\# T_\alpha & \text{ if } b\geq 0, a\geq 0\\ 
\binom{a+1}{2}-\# T_\beta-\# T_\alpha & \text{ if } b=-1, a\geq 0, \text{ and } \alpha+\beta\geq d\\
0 & \text{ otherwise.}
\end{cases}
\end{equation}

Let us calculate the number of integer points in the triangles
$T_\beta=\{(u,v)\in \ZZ^2_{\geq 0}\mid qu-pv> qb+\beta, u+v\leq a+b\}$ and 
$T_\alpha=\{(u,v)\in \ZZ^2_{\geq 0}\mid qu-pv< -pb-\alpha, u+v\leq a+b\}$. 
For $T_\beta$ consider the following change of coordinates:
$$
i=qu-pv-qb-\beta-1\geq 0,\quad j=v\geq 0,\quad k=a+b-u-v\geq 0.
$$
Note that $\tilde T_\beta=\{(i,j,k)\in \ZZ^3_{\geq 0}\mid i+dj+qk=qa-\beta-1\}$ is such that
$(i,j,k)\in \tilde T_\beta$ if and only if $(a+b-j-k,j)\in T_\beta$ and 
$(u,v)\in T_\beta$ if and only if $(qu-pv-qb-\beta-1,v,a+b-u-v)\in \tilde T_\beta$.

Then $\# T_\beta=\# \tilde T_\beta=h^0(\PP^2_w,\cO_{\PP^2_w}(qa-\beta-1))$, for $w=(d,1,q)$.
Using the adjunction formula
\begin{equation}
\label{eq:Tbeta}
\array{c}
\chi(\cO_{\PP^2_w}(qa-\beta-1))=g_{w,qa-\beta-1}-\sum_P \Delta_P(|w|+qa-\beta-1)\\
\\
=\frac{(qa-\beta-1)(qa-\beta-1+|w|)}{2\bar w}+1
-\Delta_{\frac{1}{d}(1,q)}(|w|+qa-\beta-1)
-\Delta_{\frac{1}{q}(1,d)}(|w|+qa-\beta-1)\\
\\
=\frac{(qa-\beta-1)(qa-\beta+d+q)}{2dq}+1
-\Delta_{\frac{1}{d}(1,q)}(-\beta+(a+1)q)
-\Delta_{\frac{1}{q}(1,p)}(-\beta+p),
\endarray
\end{equation}
where $|w|=1+d+q$. 
Since
\[
qa-\beta-1-(-|w|)=q(a+1)+(d-\beta)>0
\]
have $h^1(\PP^2_w,\cO_{\PP^2_w}(qa-\beta-1))=h^2(\PP^2_w,\cO_{\PP^2_w}(qa-\beta-1))=0$.
The vanishing of $h^1$ is a general property of weighted-projective spaces --\,see for instance
\cite[p.~39]{Dolgachev-weighted}. The vanishing of $h^2$ follows from Serre's duality since 
$-|w|=-1-q-d<qa-\beta-1$. Therefore, $\chi(\cO_{\PP^2_w}(qa-\beta-1))=h^0(\cO_{\PP^2_w}(qa-\beta-1))$.

Analogously for $T_\alpha$
$$
\array{rcl}
\# T_\alpha&=&\frac{(pa-\alpha-1)(pa-\alpha+d+p)}{2dp}+1
-\Delta_{\frac{1}{d}(1,p)}(-\alpha+(a+1)p)
-\Delta_{\frac{1}{p}(1,q)}(-\alpha+q).
\endarray
$$
and hence~\eqref{eq:h0D1} becomes
\begin{equation}
\label{eq:h0D-final}
\array{rl}
h^0(S,\cO_S(D))=&\!\!\!\binom{a+b+2}{2}\!-\!\binom{b+1}{2}\!-2\!
-\!\frac{(qa-\beta-1)(qa-\beta+d+q)}{2dq}
-\frac{(pa-\alpha-1)(pa-\alpha+d+p)}{2dp}\\ \\
&+\Delta_{\frac{1}{d}(1,q)}(-\beta+(a+1)q)
+\Delta_{\frac{1}{q}(1,p)}(-\beta+p)
\\ \\
&
+\Delta_{\frac{1}{d}(1,p)}(-\alpha+(a+1)p)
+\Delta_{\frac{1}{p}(1,q)}(-\alpha+q).
\endarray
\end{equation}
if $b\geq -1$ and $h^0(S,\cO_S(D))=0$ otherwise.

\begin{lema}
\label{lema:previo}
Consider $D=a Z+b F + \alpha E_X + \beta E_Y\in \Pic(S)$, where $0\leq \alpha,\beta<d$, then
\begin{enumerate}[label=\rm(\arabic*),itemindent=1em]
 \item\label{part1} 
 if either $a\leq -1$ or $b\leq -2$, then $h^0(\cO_S(D))=0$,
 \item\label{part1b}
 if $b=-1$, $a\geq 0$, and $\alpha+\beta<d$, then $h^0(\cO_S(D))=0$,
 \item\label{part2} 
 if either $a\geq -1$, $b\geq 1$, or $a+b\geq -2$, then $h^2(\cO_S(D))=0$,
\end{enumerate}
\end{lema}

\begin{proof}
Part~\ref{part1} is a direct consequence of~\eqref{eq:h0D1}. 
For part~\ref{part2} note that
$K-D\sim a'Z+b'F+\alpha'E_X+\beta'E_Y$, where $a'=-(a+2)$, 
$b'=-(b+1)-d\left(\left[ \frac{\alpha'}{d}\right]+\left[ \frac{\beta'}{d}\right]\right)$,
$\alpha'=d-\alpha-p-1$, and $\beta'=d-\beta-q-1$. In particular 
$-(b+3)\leq b'\leq -(b+1)$. Therefore, if either $a\geq -1$, $b\geq 1$ or $a+b\geq -2$ 
one has $a'\leq -1$, $b'\leq -2$, or $a'+b'\leq -1$. 
Hence, by~\ref{part1} and Serre's Duality this implies $h^2(\cO_S(D))=h^0(\cO_S(K-D))=0$.
\end{proof}

\begin{thm}
\label{thm:main}
Consider $D\sim a Z+b F + \alpha E_X + \beta E_Y\in \Pic(S)$, where $0\leq \alpha,\beta<d$, then
$H^*(S,\cO_S(D))$ is either trivial or concentrated in only one degree $k=k(D)$, for which 
$$h^k(S,\cO_S(D))=(-1)^k\chi(S,\cO_S(D)).$$

Moreover, if $a=-1$, then $\cO_S(D)$ is acyclic. For the remaining cases, $k$ can be 
obtained according to Table{\rm~\ref{tabla}}:

\begin{table}
\begin{center}
\begin{tabular}{c|c|c|}
& $a>-1$ & $a<-1$ \\ \hline &&\\ [-1em]
$\begin{cases} b>-1 \\ b=-1 \text{ and } \alpha+\beta\geq d\end{cases}$ & $k=0$ & $k=1$ \\ [4ex] \hline &&\\ [-1em]
$\begin{cases} b<-1 \\ b=-1 \text{ and } \alpha+\beta< d\end{cases}$ & $k=1$ & $k=2$ \\ [4ex] \hline
\end{tabular}
\end{center}
\caption{}
\label{tabla}
\end{table} 
\end{thm}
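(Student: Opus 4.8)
The plan is to show that, in each cell of Table~\ref{tabla}, at most one of $h^0,h^1,h^2$ is nonzero; once this is established, the Riemann--Roch relation $\chi(\cO_S(D))=h^0-h^1+h^2$ forces the surviving group to equal $(-1)^k\chi(\cO_S(D))$, which is exactly the asserted formula. I would organize everything around the involution $D\mapsto K_S-D$, under which Serre duality gives $h^i(\cO_S(D))=h^{2-i}(\cO_S(K_S-D))$. Using the reduction computed in the proof of Lemma~\ref{lema:previo}, namely $a'=-(a+2)$, $\alpha'=d-\alpha-p-1$, $\beta'=d-\beta-q-1$ and $-(b+3)\le b'\le-(b+1)$, a short check shows that this involution interchanges the cells labelled $k=0$ and $k=2$ and preserves the cell labelled $k=1$, in accordance with Serre duality. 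It therefore suffices to treat the two cells with $k=0$, the two cells with $k=1$, and the diagonal $a=-1$.

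For the two cells with $k=1$ I would prove directly that $h^0$ and $h^2$ both vanish. When $a>-1$ (so $a\ge0$), part~\ref{part2} of Lemma~\ref{lema:previo} gives $h^2(\cO_S(D))=0$, while $h^0(\cO_S(D))=0$ follows from part~\ref{part1} when $b\le-2$ and from part~\ref{part1b} when $b=-1$ and $\alpha+\beta<d$. When $a<-1$ (so $a\le-2$), part~\ref{part1} gives $h^0(\cO_S(D))=0$, and $h^2(\cO_S(D))=h^0(\cO_S(K_S-D))$ vanishes because $K_S-D$ has $a'\ge0$ and, by the reduction formulas above, lands in the previous case, so its $h^0$ is killed by part~\ref{part1} or~\ref{part1b} applied to $K_S-D$. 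In either cell the only group that can survive is $h^1=-\chi(\cO_S(D))$.

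For the cells with $k=0$ (where $a\ge0$ and either $b\ge0$, or $b=-1$ with $\alpha+\beta\ge d$) the vanishing $h^2(\cO_S(D))=0$ is again part~\ref{part2} of Lemma~\ref{lema:previo}. The remaining and decisive point is $h^1(\cO_S(D))=0$. Since $b\ge-1$ throughout this region, formula~\eqref{eq:h0D-final} computes $h^0(\cO_S(D))$ as a lattice-point count, and Lemma~\ref{lemma:chigen} computes $\chi(\cO_S(D))$; I would verify that these two expressions coincide, which yields $h^1=h^0+h^2-\chi=0$. Concretely, one expands both sides, cancels the binomial and quadratic (polynomial) parts, and reconciles the four correction terms $\Delta_{\frac1d(1,\cdot)}(\cdots)$ by repeated use of the identities in Proposition~\ref{prop:Delta}. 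The two cells with $k=2$ then follow at once by applying Serre duality to this $k=0$ conclusion on $K_S-D$.

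Finally, on the diagonal $a=-1$ both $h^0$ and $h^2$ vanish: part~\ref{part1} gives $h^0(\cO_S(D))=0$, and $K_S-D$ has $a'=-1$, so $h^0(\cO_S(K_S-D))=0$ as well, giving $h^2(\cO_S(D))=0$ by Serre duality. Acyclicity then reduces to $\chi(\cO_S(D))=0$, which I would read off Lemma~\ref{lemma:chigen}: setting $a=-1$ and $r=0$, the polynomial part collapses to $1-\tfrac1d$, and grouping the four $\Delta$-terms into the two pairs $\Delta_{\frac1d(1,p)}(-\alpha)+\Delta_{\frac1d(1,q)}(-\alpha+q)$ and $\Delta_{\frac1d(1,q)}(-\beta)+\Delta_{\frac1d(1,p)}(-\beta+p)$ and applying part~(1) of Proposition~\ref{prop:Delta} to each pair yields $\tfrac{d-1}{2d}+\tfrac{d-1}{2d}=\tfrac{d-1}{d}$, so that $\chi=1-\tfrac1d-\tfrac{d-1}{d}=0$. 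The main obstacle throughout is precisely the identity $h^0(\cO_S(D))=\chi(\cO_S(D))$ of the $k=0$ case: the geometric input (the two vanishings) is inexpensive, but matching the lattice-point count~\eqref{eq:h0D-final} with the Riemann--Roch expression demands careful bookkeeping of fractional parts through Proposition~\ref{prop:Delta}, which is where the arithmetic subtlety proper to the normal (as opposed to smooth) setting is concentrated.
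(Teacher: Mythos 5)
Your proposal is correct and follows essentially the same route as the paper's proof: the vanishing statements of Lemma~\ref{lema:previo}, the identity $h^0(\cO_S(D))=\chi(\cO_S(D))$ in the $k=0$ region obtained by matching the lattice-point count~\eqref{eq:h0D-final} against Lemma~\ref{lemma:chigen} via Proposition~\ref{prop:Delta}, and Serre duality to transfer the conclusion to the remaining cells. The only deviations are organizational and harmless: you package the paper's eight cases around the involution $D\mapsto K_S-D$ (whose action on the rows of Table~\ref{tabla} requires the same sub-case check of $b'$ and $\alpha'+\beta'$ that the paper carries out), and you establish acyclicity on the diagonal $a=-1$ by computing $\chi(\cO_S(D))=0$ directly rather than deducing it from the other cases.
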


\begin{proof}
We shall prove the moreover part which implies the first part of the statement. 
It will be done on a case by case basis. The most involved case being $a,b>-1$.
\begin{enumerate}[label=\rm(\arabic*),itemindent=1em]
 \item\label{part5} Case $a\geq -1$, $b>-1$: 
Note that $h^2(\cO_S(D))=0$ is a consequence of Lemma~\ref{lema:previo}.
Hence it is enough to show $h^0(\cO_S(D))=\chi(\cO_S(D))$ using Riemann-Roch's formula.
From Lemma~\ref{lemma:chigen} for $d_1=d_2=d$, $p_1=p_2=p$, $q_1=q_2=q$, and $r=0$ one has
\begin{equation}
\label{eq:RR2}
\array{rcl}
\chi(\cO_S(D))&=&1+\frac{((a+1)(db+\alpha+\beta+1)-1)}{d}\\ \\
&&-\Delta_{\frac{1}{d}(1,p)}(-\alpha)
-\Delta_{\frac{1}{d}(1,q)}(-\beta)\\ \\
&&-\Delta_{\frac{1}{d}(1,p)}(-\beta-pa)
-\Delta_{\frac{1}{d}(1,q)}(-\alpha-qa)
\endarray
\end{equation}
First we will combine the values of $\Delta_P$, $P\in \Sing(S)$ appearing in~\eqref{eq:h0D-final} 
and~\eqref{eq:RR2}. The following are a consequence of Proposition~\ref{prop:Delta}.
$$
\array{l}
\Delta_{\frac{1}{d}(1,q)}(qa-\beta+q)+\Delta_{\frac{1}{d}(1,p)}(-\beta-pa)=\frac{d-1}{2d}\\ \\
\Delta_{\frac{1}{d}(1,p)}(pa-\alpha+p)+\Delta_{\frac{1}{d}(1,q)}(-\alpha-qa)=\frac{d-1}{2d}.
\endarray
$$
Also, using the adjunction formula
$$
\array{l}
\chi(\cO_{\PP^2_{(1,d,p)}}(-\alpha-1-p))=
g_{(1,d,p),-\alpha-1-p}-\Delta_{\frac{1}{p}(1,q)}(-\alpha+q)-\Delta_{\frac{1}{d}(1,p)}(-\alpha)\\ \\
\chi(\cO_{\PP^2_{(1,d,q)}}(-\beta-1-q))=
g_{(1,d,q),-\beta-1-q}-\Delta_{\frac{1}{q}(1,p)}(-\beta+p)-\Delta_{\frac{1}{d}(1,q)}(-\beta)
\endarray
$$
On the other hand $h^i(\PP^2_{(1,d,p)},\cO_{\PP^2_w}(-\alpha-1-p))=0$, for $i=0$ (since $-\alpha-1-p<0$)
for $i=1$ (general property of quasi-projective planes), and for $i=2$ (using Serre's duality plus the 
fact that $-|w|=-1-d-p<-\alpha-1-p$), then $\chi(\cO_{\PP^2_{(1,d,p)}}(-\alpha-1-p))=0$. Analogously 
$\chi(\cO_{\PP^2_{(1,d,q)}}(-\beta-1-q))=0$. Using the formula for $h^0(\cO_S(D))$ for the case 
$b\geq 0$, $a\geq 0$ one obtains
$$
\array{rl}
h^0(\cO_S(D))-\chi(\cO_S(D))\!&=
\binom{a+b+2}{2}-\binom{b+1}{2}
-\frac{(qa-\beta-1)(qa-\beta+d+q)}{2dq}
-\frac{(pa-\alpha-1)(pa-\alpha+d+p)}{2dp}\\ \\
&-1-\frac{((a+1)(db+\alpha+\beta+1)-1)}{d}
+\frac{d-1}{d}
+\frac{(\alpha+p+1)(\alpha-d)}{2dp}
+\frac{(\beta+q+1)(\beta-d)}{2dq}\!=\!0.
\endarray
$$
The last equality is a straightforward calculation. 

\item\label{part5b} 
Case $b<-1\leq a$: note that $h^2(\cO_S(D))=h^0(\cO_S(D))=0$ by Lemma~\ref{lema:previo}.

\item Case $a\leq -1$, $b<-1$: 
first note that $b< -1$ implies $h^0(\cO_S(D))=0$ by Lemma~\ref{lema:previo}.
Also $a\leq -1$ implies $-(a+2)\geq -1$, and $b< -1$ implies $-(b+1)> -1$.
Hence by Serre's duality $h^1(\cO_S(D))=h^1(\cO_S(K-D))=0$ and case~\ref{part5}.
\item Case $a<-1<b$: again $a\leq -1$ implies $h^0(\cO_S(D))=0$. 
Also $b\geq 0$ implies $h^2(\cO_S(D))=0$ according to case~\ref{part5}.
then $h^1(\cO_S(D))=-\chi(\cO_S(D))$. 
\item Case $b=-1$, $\alpha+\beta\geq d$, and $a\geq -1$: the proof of case~\ref{part5} is 
also valid in this situation.
\item Case $b=-1$, $\alpha+\beta< d$, and $a\geq -1$: according to 
Lemma~\ref{lema:previo}~\ref{part2} $h^2(\cO_S(D))=0$ (since $a\geq -1$). To prove $h^0(\cO_S(D))=0$
we distinguish two cases: if $a\geq 0$ one uses Lemma~\ref{lema:previo}~\ref{part1b} and if
$a=-1$ one can use Lemma~\ref{lema:previo}~\ref{part1}.
\item Case $b=-1$, $\alpha+\beta\geq d$, and $a\leq -1$: 
according to Lemma~\ref{lema:previo}~\ref{part1} $h^0(\cO_S(D))=0$ (since $a\leq -1$).
To prove $h^2(\cO_S(D))=0$ we will use Serre's duality over $K-D\sim a'Z+b'F+\alpha'E_X+\beta'E_Y$
as described in the proof of Lemma~\ref{lema:previo} to show that $h^0(\cO_S(K-D))=0$. 
Note that $a\leq -1$ implies $a'\geq -1$. If $a'=1$, then one can use 
Lemma~\ref{lema:previo}~\ref{part1} to show that $h^0(\cO_S(K-D))=0$. Hence we will assume 
$a'\geq 0$. We will distinguish several cases.
If both $\alpha\geq q$, and $\beta\geq p$, then $b'=-2$ and thus $h^0(\cO_S(K-D))=0$ by 
Lemma~\ref{lema:previo}~\ref{part1}. Otherwise $b'=-1$, which is obtained only if either
$\alpha\geq q$ and $\beta<p$ or $\alpha< q$ and $\beta\geq p$. In either case
$\alpha'+\beta'=2d-\alpha-\beta-2<d$. The result then follows from 
Lemma~\ref{lema:previo}~\ref{part1b}.
\item Case $b=-1$, $\alpha+\beta< d$, and $a\leq -1$: 
according to Lemma~\ref{lema:previo}~\ref{part1} $h^0(\cO_S(D))=0$ (since $a\leq -1$).
To prove $h^1(\cO_S(D))=0$ we will use Serre's duality over $K-D\sim a'Z+b'F+\alpha'E_X+\beta'E_Y$
as described in the proof of Lemma~\ref{lema:previo} to show that $h^1(\cO_S(K-D))=0$.
Note that $b=-1$ and $\alpha+\beta<d$ implies $b'=0$, and hence case~\ref{part5} implies
$h^1(\cO_S(K-D))=0$.
\end{enumerate}

The case $a=-1$, $b\neq -1$ is a consequence of cases~\ref{part5}-\ref{part5b}.
\end{proof}

\begin{obs}
The case $b=-1$ is special and the nullity of $\chi(\cO_S(D))$ 
(which implies acyclicity by Theorem~\ref{thm:main}) depends on the values of $\alpha$ and $\beta$. 
For instance, if $S$ is the rational ruled toric surface given by $(d,p,q,r)=(5,3,2,0)$, 
$D_1\sim Z-F+3E_X+2E_Y$, $D_2\sim Z-F+3E_X+E_Y$, and $D_3\sim Z-F+2E_X+E_Y$, then 
$$
\array{ccc}
h^0(\cO_S(D_1))=\chi(\cO_S(D_1))=1 \quad & h^1(\cO_S(D_1))=0 & h^2(\cO_S(D_1))=0\\
h^0(\cO_S(D_2))=0 & h^1(\cO_S(D_2))=0 & h^2(\cO_S(D_2))=0\\
h^0(\cO_S(D_3))=0 & h^1(\cO_S(D_3))=-\chi(\cO_S(D_3))=1 \quad & h^2(\cO_S(D_3))=0\\
\endarray
$$
\end{obs}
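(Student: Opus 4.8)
The plan is to treat all three divisors uniformly by first invoking Theorem~\ref{thm:main} to pin down \emph{which} cohomology group can survive, and then computing the single relevant invariant, namely the Euler characteristic, from Lemma~\ref{lemma:chigen}. All three divisors share $(d,p,q,r)=(5,3,2,0)$ and $a=1$, $b=-1$, differing only in $(\alpha,\beta)$, so the classification of Theorem~\ref{thm:main} applies verbatim and the whole verification reduces to reading off one degree and one number per divisor.

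First I would read off the concentration degree from Table~\ref{tabla}. Since $a=1>-1$ and $b=-1$, the relevant column is $a>-1$ and the relevant row is decided by the sign of $\alpha+\beta-d$. For $D_1$ one has $\alpha+\beta=3+2=5=d$, hence $\alpha+\beta\geq d$ and $k=0$; Theorem~\ref{thm:main} then forces $h^1(\cO_S(D_1))=h^2(\cO_S(D_1))=0$ and $h^0(\cO_S(D_1))=\chi(\cO_S(D_1))$. For $D_2$ ($\alpha+\beta=4$) and $D_3$ ($\alpha+\beta=3$) one has $\alpha+\beta<d$ and $k=1$, so $h^0=h^2=0$ and $h^1=-\chi$. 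This already accounts for every vanishing entry in the stated table; what remains is to compute the three Euler characteristics and check that they equal $1$, $0$, and $-1$.

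For the Euler characteristics I would use the specialization~\eqref{eq:RR2} of Lemma~\ref{lemma:chigen} to $r=0$. With $(d,p,q)=(5,3,2)$ and $a=1$, $b=-1$ its arithmetic part equals $1+\tfrac{2\alpha+2\beta-9}{5}$, and (since $pa=3$ and $qa=2$) its correction term is
\[
-\Delta_{\frac{1}{5}(1,3)}(-\alpha)-\Delta_{\frac{1}{5}(1,2)}(-\beta)-\Delta_{\frac{1}{5}(1,3)}(-\beta-3)-\Delta_{\frac{1}{5}(1,2)}(-\alpha-2).
\]
Each $\Delta_{\frac{1}{5}(1,m)}$ depends only on its argument modulo $5$, and the finitely many values needed can be extracted from the two identities of Proposition~\ref{prop:Delta} together with the base values $\Delta_{\frac{1}{5}(1,m)}(0)=0$ and $\Delta_{\frac{1}{5}(1,m)}(1)=\tfrac{2}{5}$. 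This produces the short table $\Delta_{\frac{1}{5}(1,3)}(k)=0,\tfrac{2}{5},\tfrac{1}{5},\tfrac{2}{5},0$ and $\Delta_{\frac{1}{5}(1,2)}(k)=0,\tfrac{2}{5},\tfrac{2}{5},0,\tfrac{1}{5}$ for $k\equiv 0,1,2,3,4\pmod 5$ respectively. Substituting the three pairs $(\alpha,\beta)$ gives $\chi(\cO_S(D_1))=1$, $\chi(\cO_S(D_2))=0$, and $\chi(\cO_S(D_3))=-1$, which combined with the concentration degrees above yields exactly the nine asserted entries.

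The computation has no serious obstacle beyond the careful reduction modulo $d=5$ of the arguments of the four $\Delta$-terms, which is the one place an error could creep in. As an independent safeguard I would cross-check the $H^0$ column directly from the monomial description~\eqref{eq:h0D}: since $a+b=0$, the only candidate section is the constant monomial (that is $u=v=0$), and the inequalities in~\eqref{eq:h0D} reduce to $\alpha\geq p=3$ and $\beta\geq q=2$; this holds precisely for $D_1$, recovering $h^0(\cO_S(D_1))=1$ and $h^0(\cO_S(D_2))=h^0(\cO_S(D_3))=0$ without appealing to the $\Delta$-table. The upshot, and the reason for stating the observation, is that $D_2$ and $D_3$ occupy the \emph{same} region ($a>-1$, $b=-1$, $\alpha+\beta<d$, hence $k=1$) yet differ in acyclicity, since there $\chi$ still varies with $(\alpha,\beta)$.
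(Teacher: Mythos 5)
Your proposal is correct and follows exactly the route the paper implicitly relies on for this observation: Table~\ref{tabla} of Theorem~\ref{thm:main} pins the concentration degree ($k=0$ for $D_1$ since $\alpha+\beta=5=d$, $k=1$ for $D_2,D_3$), and the Euler characteristics computed from~\eqref{eq:RR2} with the $\Delta$-values derived from Proposition~\ref{prop:Delta} (your tables $\Delta_{\frac{1}{5}(1,3)}=0,\frac25,\frac15,\frac25,0$ and $\Delta_{\frac{1}{5}(1,2)}=0,\frac25,\frac25,0,\frac15$ are correct) give $\chi=1,0,-1$ as claimed. The direct monomial cross-check of the $H^0$ column via~\eqref{eq:h0D} is a nice independent confirmation but not a departure from the paper's method.
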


\begin{ejm}
Consider the rational ruled toric surface $S_{(d,p)}$ with sections $Z$ and $E$ and $E^2=Z^2=0$ 
as in Figure~\ref{fig:wnagata}. Note that despite $dE_Y=dE_Y=F$, the divisors $E_X$ and $E_Y$ are not 
equivalent. This can be deduced from the homology calculations, since $D=E_X-E_Y=-F+E_X+(d-1)E_Y$ 
satisfies $\chi(\cO_S(D))=0$ however, $\chi(\cO_S)=1$. Note that in particular the Euler characteristic
of $\cO_S(D)$ is sensitive to the torsion of $\Pic(S)$. In the Riemann-Roch formula~\eqref{eq:RR} the 
first summand $\frac{1}{2}D(D+Z_K)$ is only numerical, however $\Delta_S$ is not. In particular 
$\Delta_S(-D)=1$ but $\Delta_S(dD)=\Delta_S(0)=0$.
\end{ejm}

\section{Rational uniruled toric surfaces}
\label{sec:RRTsurfaces-generaltype}

Let $S$ be a toric ruled surface, 
where a section has two quotient singular points $\frac{1}{d_i}(1,n_i)$, $i=1,2$, with $\gcd(d_i,n_i)=1$, $0<n_i\leq d_i$, 
and self-intersection~$r>0$ such that $\frac{n_1}{d_1}+\frac{n_2}{d_2}-r=1-k\in \ZZ$ ($k\geq 0$). 
Since the value of $n_i$ is only well defined modulo $d_i$, one can consider $q_2:=n_2\geq n_1$, $p_1=n_1+kd_1$,
$p_2=d_2-q_2$, and $q_1=d_1-n_1$, in which case
$$
\frac{p_1}{d_1}+\frac{q_2}{d_2}-r=1.
$$
Hence,
\begin{equation}
\label{eq:pq1} 
d_2 p_1+d_1 q_2=d_1 d_2+rd_1d_2>d_1d_2.
\end{equation}

Similarly to the discussion in section~\ref{sec:RRTsurfaces-specialtype} note that 
$H^0(S,\cO_S(D))$ is isomorphic to the linear subspace $V_0$ with basis
\[
\{X^u Y^v\mid u,v\geq 0,\ b\leq u+v\leq a+b,\ q_2 u-p_2v\leq p_2 b+\beta,\ q_1 u-p_1v\geq -p_1 b-\alpha\}.
\]
Let us consider necessary conditions for $V_0$ to be different from~$\{0\}$.
It is easily seen that both $a+b\geq -1$ and $a\geq -1$ are required.
The last two conditions are determined by the lines
$\ell_1=\{p_1 v-q_1u= p_1 b+\alpha\}$ and $\ell_2=\{q_2 u-p_2v= q_2 b+\beta\}$; 
their intersection point is in the line 
\[
u+v=b+\frac{b+\frac{\alpha}{d_1}+\frac{\beta}{d_2}}{r}.
\]
In particular, we can also assume $b\geq -1$. This point is
\[
\left(\frac{ \frac{p_{1}}{d_1} b +  \frac{\alpha}{d_1} }{r}+\frac{p_{1} \beta- q_{2} \alpha}{d_{1} d_{2} r},\,\frac{ 
\frac{q_2}{d_2} b + \frac{\beta}{d_2}}{ r}-\frac{p_{1} \beta- q_{2} \alpha}{d_{1} d_{2} r}\right).
\]
Note that in general, $\ell_1$ and $\ell_2$ are not parallel anymore, since their slopes are $m(\ell_i)=\frac{q_i}{p_i}$ 
and $q_2p_1-q_1p_2=q_2p_1-(d_1-p_1)(d_2-q_2)=p_1d_2+q_2d_1-d_1d_2>0$ by~\eqref{eq:pq1}.

Two cases will be distinguished:

\subsection{Case \texorpdfstring{$k=0$}{k=0}}
In this case one can check that both slopes $m(\ell_i)$ are positive. 
Hence for values $a<\frac{b+\frac{\alpha}{d_1}+\frac{\beta}{d_2}}{r}$ --\,see Figure~\eqref{fig:h01a}\,--
note that $h^0(\cO_S(D))$ depends on $a$, however if $a\geq \frac{b+\frac{\alpha}{d_1}+\frac{\beta}{d_2}}{r}$ 
--\,see Figure~\eqref{fig:h01b}\,--, then $h^0(\cO_S(D))$ is independent of $a$.

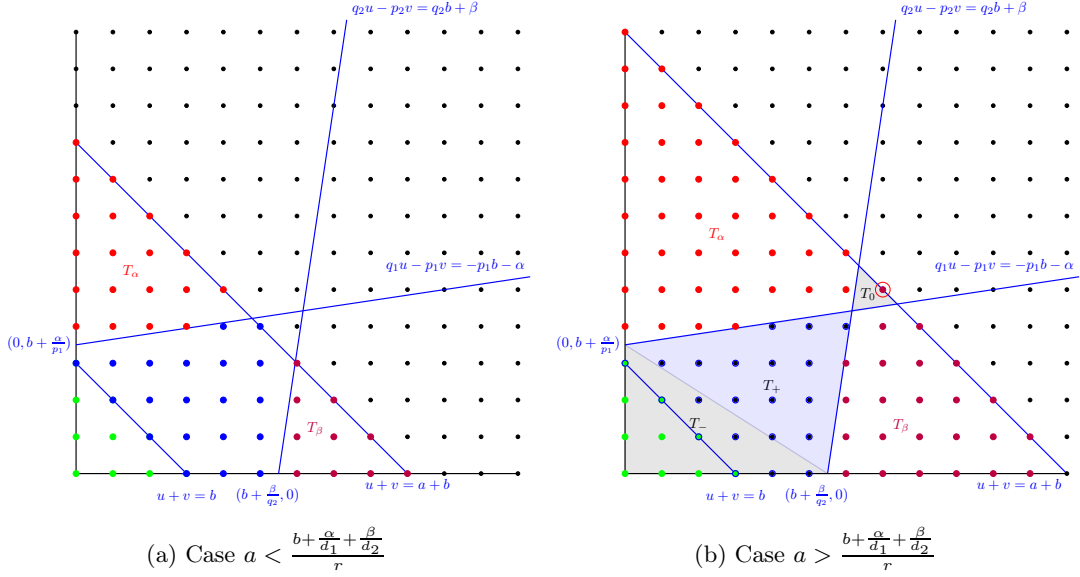
\begin{figure}
\centering
\begin{subfigure}[b]{0.49\textwidth}
    \centering
\scalebox{.55}{
\begin{tikzpicture}[scale=.89]
\draw[black,smooth,thick] (0,12) -- (0,0) -- (12,0);
\draw[blue,smooth,thick] (0,9) -- (9,0) node [blue,below] {$u+v=a+b$};
\draw[blue,smooth,thick] (0,3) -- (3,0) node [blue,below]  at (3,-.3) {$u+v=b$};
\draw[blue,smooth,thick] (2+6/7,0) -- (3,0);
\draw[fill] circle [radius=.05] (0,0);
\foreach \x in {0,...,12} 
\foreach \y in {0,...,12} 
    \draw[fill] (\x,\y) circle (0.05cm);

\draw[smooth, color=blue, domain=0:37, thick, line cap=butt, samples=400] plot (\x/20+4+6/4,\x/3) node [blue,above right] {$q_2u-p_2v=q_2b+\beta$};
\draw[smooth, color=blue, domain=0:37, thick, line cap=butt, samples=400] plot (\x/3,\x/20+3.5) node [blue,above left] {$q_1u-p_1v=-p_1b-\alpha$};

\foreach \x in {(0,4),(0,5),(0,6),(0,7),(0,8),(0,9),(1,4),(1,5),(1,6),(1,7),(1,8),(2,4),(2,5),(2,6),(2,7),(3,4),(3,5),(3,6),(4,5)} 
\draw[fill,red] \x circle (0.08cm);
\node[color=blue] at (-1,3.5) {$(0,b+\frac{\alpha}{p_1})$};
\node[color=red] at (1.5,5.5) {$T_\alpha$};

\foreach \x in {(6,0),(6,1),(6,2),(6,3),(7,0),(7,1),(7,2),(8,0),(8,1),(9,0)} 
\draw[fill,purple] \x circle (0.08cm);
\node[color=blue] at (5.2,-0.6) {$(b+\frac{\beta}{q_2},0)$};
\node[color=purple] at (6.5,1.2) {$T_\beta$};

\foreach \x in {(0,0),(0,1),(0,2),(0,3),(1,0),(1,1),(1,2),(2,0),(2,1),(3,0)} 
\draw[fill,green] \x circle (0.08cm);

\foreach \x in {(0,3),(1,2),(1,3),(2,1),(2,2),(2,3),(3,0),(3,1),(3,2),(3,3),(4,0),(4,1),(4,2),(4,3),(4,4),(5,0),(5,1),(5,2),(5,3),(5,4)} 
\draw[fill,blue] \x circle (0.08cm);

\end{tikzpicture}
}
\caption{Case $a<\frac{b+\frac{\alpha}{d_1}+\frac{\beta}{d_2}}{r}$}
\label{fig:h01a}
\end{subfigure}%
~ 
\begin{subfigure}[b]{0.49\textwidth}
   \centering
\scalebox{.55}{
\begin{tikzpicture}[scale=.89]
\draw[black,smooth,thick] (0,12) -- (0,0) -- (12,0);
\draw[blue,smooth,thick] (0,12) -- (12,0) node [blue,below left] {$u+v=a+b$};
\draw[blue,smooth,thick] (0,3) -- (3,0) node [blue,below]  at (3,-.3) {$u+v=b$};
\draw[blue,smooth,thick] (2+6/7,0) -- (3,0);
\draw[fill] circle [radius=.05] (0,0);
\draw[black,fill,opacity=0.1,smooth,thick] (0,3.5) -- (5.5,0) -- (0,0) -- cycle node [black,below,opacity=1] at (4,2.7) {$T_+$};
\draw[blue,fill,opacity=0.1,smooth,thick] (0,3.5) -- (5.5,0) -- (6.15,4.4) -- cycle node [black,below,opacity=1] at (2,1.66) {$T_-$};
\draw[black,fill,opacity=0.1,smooth,thick] (7.4,4.6) -- (6.35,5.65) -- (6.15,4.4) -- cycle node [black,below,opacity=1] at (6.6,5.2) {$T_0$};
\foreach \x in {0,...,12} 
\foreach \y in {0,...,12} 
    \draw[fill] (\x,\y) circle (0.05cm);

\draw[smooth, color=blue, domain=0:37, thick, line cap=butt, samples=400] plot (\x/20+4+6/4,\x/3) node [blue,above right] {$q_2u-p_2v=q_2b+\beta$};
\draw[smooth, color=blue, domain=0:37, thick, line cap=butt, samples=400] plot (\x/3,\x/20+3.5) node [blue,above left] {$q_1u-p_1v=-p_1b-\alpha$};

\foreach \x in {(0,4),(0,5),(0,6),(0,7),(0,8),(0,9),(0,10),(0,11),(0,12),(1,4),(1,5),(1,6),(1,7),(1,8),(1,9),(1,10),(1,11),(2,4),(2,5),(2,6),(2,7),(2,8),(2,9),(2,10),(3,4),(3,5),(3,6),(3,7),(3,8),(3,9),(4,5),(4,6),(4,7),(4,8),(5,5),(5,6),(5,7),(6,5),(6,6)} 
\draw[fill,red] \x circle (0.08cm);
\draw[red] (7,5) circle (0.2cm);
\node[color=blue] at (-1,3.5) {$(0,b+\frac{\alpha}{p_1})$};
\node[color=red] at (2.5,6.5) {$T_\alpha$};

\foreach \x in {(6,0),(6,1),(6,2),(6,3),(7,0),(7,1),(7,2),(7,3),(7,4),(8,0),(8,1),(8,2),(8,3),(8,4),(9,0),(9,1),(9,2),(9,3),(10,0),(10,1),(10,2),(11,0),(11,1)}
\draw[fill,purple] \x circle (0.08cm);
\draw[fill,purple] (7,5) circle (0.08cm);
\node[color=blue] at (5.2,-0.6) {$(b+\frac{\beta}{q_2},0)$};
\node[color=purple] at (7.5,1.3) {$T_\beta$};

\foreach \x in {(0,0),(0,1),(0,2),(0,3),(1,0),(1,1),(1,2),(2,0),(2,1),(3,0)} 
\draw[fill,green] \x circle (0.08cm);

\foreach \x in {(0,3),(1,2),(1,3),(2,1),(2,2),(2,3),(3,0),(3,1),(3,2),(3,3),(4,0),(4,1),(4,2),(4,3),(4,4),(5,0),(5,1),(5,2),(5,3),(5,4),(6,4)} 
\draw[blue,thick] \x circle (0.08cm);

\end{tikzpicture}
}
\caption{Case $a>\frac{b+\frac{\alpha}{d_1}+\frac{\beta}{d_2}}{r}$}
\label{fig:h01b}
\end{subfigure}
\caption{Monomials in $H^0(\cO_S(D))$ case $k=0$.}
\label{fig:h01}
\end{figure}

\begin{thm}
\label{thm:main2}
Assume $D\sim aZ+bF+\alpha E_X+\beta E_Y$, where $a\geq 0$ and $b\geq -1$, then 
\begin{itemize}
 \item if $\frac{b+\frac{\alpha}{d_1}+\frac{\beta}{d_2}}{r}>a\geq 0$, $b\geq -1$, then 
 $$\array{cc} h^0(\cO_S(D))=\chi(\cO_S(D)), & h^1(\cO_S(D))=h^2(\cO_S(D))=0\endarray$$ 
 \item otherwise $\chi(\cO_S(D))$ is given as in Lemma~\ref{lemma:chigen}, $h^2(\cO_S(D))=0$ and 
 $$h^1(\cO_S(D))=h^0(\cO_S(D))-\chi(\cO_S(D))=h^0(\PP^2_w;\cO(\rho)),$$ 
 where $w=(d_1,d_2,rd_1d_2)$ and $\rho=rd_1d_2a-d_1d_2b-d_1\beta-d_2\alpha\geq 0$ is quadratic in~$a$.
\end{itemize}
\end{thm}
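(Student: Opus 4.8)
The plan is to mirror the proof of Theorem~\ref{thm:main}: pin down $h^2$ by Serre duality, use Riemann--Roch to reduce $h^1$ to the difference $h^0-\chi$, and then evaluate $h^0$ as a count of lattice points in a polygon while reading off $\chi$ from Lemma~\ref{lemma:chigen}. First I would dispose of $h^2$. By Serre duality $h^2(\cO_S(D))=h^0(\cO_S(K_S-D))$, and from $Z_K\simmap{\text{num}}2Z+rF+E_X+E_Y$ the divisor $K_S-D$ has $Z$--coefficient $-(a+2)\le -2$ as soon as $a\ge 0$; since a nonzero global section forces the $Z$--coefficient to be $\ge -1$ (as noted just before the theorem), this gives $h^2(\cO_S(D))=0$ throughout. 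Riemann--Roch~\eqref{eq:RR} together with Lemma~\ref{lemma:chigen} then yields $h^1(\cO_S(D))=h^0(\cO_S(D))-\chi(\cO_S(D))$, so the entire statement reduces to computing $h^0$ and comparing it with $\chi$.

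Next I would organize the case split by the sign of $\rho$. Writing $\rho=rd_1d_2a-d_1d_2b-d_1\beta-d_2\alpha$ and dividing by $rd_1d_2>0$ gives $\rho/(rd_1d_2)=a-\tfrac1r\bigl(b+\tfrac{\alpha}{d_1}+\tfrac{\beta}{d_2}\bigr)$, so the first bullet is exactly the region $\rho<0$ and the second is $\rho\ge 0$; thus the whole theorem is the single assertion $h^1(\cO_S(D))=h^0(\PP^2_w;\cO(\rho))$ with the convention that $h^0$ of a negative bundle vanishes. For $h^0$ I would count the monomials $X^uY^v$ of the basis of $V_0$ by inclusion--exclusion inside the band $\{b\le u+v\le a+b\}$, removing the triangle $T_\alpha$ (violating $q_1u-p_1v\ge -p_1b-\alpha$) and the triangle $T_\beta$ (violating $q_2u-p_2v\le q_2b+\beta$). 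When $\rho<0$ the lines $\ell_1,\ell_2$ have not crossed inside the band (Figure~\ref{fig:h01a}), the two triangles are disjoint, and the resulting count equals $\chi(\cO_S(D))$ by exactly the cancellation of the $\Delta_P$ terms via Proposition~\ref{prop:Delta} and the adjunction formula~\eqref{eq:adjp2} used in case~\ref{part5} of Theorem~\ref{thm:main}; hence $h^1=0$. When $\rho\ge 0$ the lines cross inside the band (Figure~\ref{fig:h01b}) and inclusion--exclusion adds back the overlap triangle $T_0=T_\alpha\cap T_\beta$, so $h^0(\cO_S(D))=\chi(\cO_S(D))+\#T_0$ and therefore $h^1(\cO_S(D))=\#T_0$.

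Finally I would identify this residual count with sections on a weighted projective plane. Setting $i=p_1v-q_1u-p_1b-\alpha$, $j=q_2u-p_2v-q_2b-\beta$, $k=a+b-u-v$, the relations $p_1+q_1=d_1$, $p_2+q_2=d_2$ and $d_2p_1+d_1q_2=(1+r)d_1d_2$ from~\eqref{eq:pq1} eliminate $u,v$ and leave the single weighted equation $d_2i+d_1j+rd_1d_2k=\rho$; thus $T_0$ is in bijection with the monomials of weighted degree $\rho$ on $\PP^2_w$, $w=(d_1,d_2,rd_1d_2)$ (the two leading weights may be reordered freely). To pass from this count to $h^0(\PP^2_w;\cO(\rho))$ I would invoke the standard vanishing $h^1(\PP^2_w;\cO(\rho))=0$ for weighted projective spaces together with Serre duality for $h^2$, both applicable since $\rho\ge 0>-|w|$; this also reconfirms $\rho\ge 0$, which is precisely the Case~2 hypothesis. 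The quadratic dependence on $a$ is then visible from the degree--$\rho$ Hilbert function of $\PP^2_w$, $\rho$ being linear in $a$.

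I expect the main obstacle to be the bookkeeping in the chain $h^0-\chi=\#T_0=h^0(\PP^2_w;\cO(\rho))$: matching the boundary conventions on $\ell_1,\ell_2$ so that the change of variables lands exactly at degree $\rho$ (a naive open--triangle count sits at degree $\rho-d_1-d_2$, the gap being absorbed by the $\Delta_P$ corrections), and verifying the vanishing of the auxiliary $\Delta$--combinations on the intermediate weighted planes. This is entirely analogous to, but heavier than, the computation~\eqref{eq:Tbeta} and the closing identity of case~\ref{part5} in Theorem~\ref{thm:main}.
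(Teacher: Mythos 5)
Your proposal follows essentially the same route as the paper's proof: kill $h^2$ by Serre duality using the $Z$-coefficient $-(a+2)\leq -2$ of $K-D$, split the cases by whether the intersection point of $\ell_1,\ell_2$ lies inside the band $b\leq u+v\leq a+b$ (equivalently the sign of $\rho$), show $h^0=\chi$ in the first case by the same $\Delta$-cancellation as in case (1) of Theorem~\ref{thm:main}, and in the second case identify $h^1=\#T_0$ by inclusion--exclusion and convert $\#T_0$ into $h^0(\PP^2_w,\cO(\rho))$ by a linear change of variables. The bookkeeping issue you flag (boundary conventions versus the exact weighted degree $\rho$) is precisely the step the paper itself compresses into ``one can check,'' so your account matches the paper's argument in both structure and level of detail.
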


\begin{proof}
For the first part one can apply the following formula 
\begin{equation}
\label{eq:h0gen}
h^0(\cO_S(D))=\binom{a+b+2}{2}-\binom{b+1}{2}-\# T_\alpha - \# T_\beta
\end{equation}
--\,in Figure~\eqref{fig:h01a} the dimension is given by the solid blue dots.
An analogous proof to that of Theorem~\ref{thm:main} case~\ref{part5} using~\eqref{eq:h0gen} and Lemma~\ref{lemma:chigen} shows that 
$$h^0(\cO_S(D))-\chi(\cO_S(D))=\frac{(d_1d_2r+d_1d_2-d_2p_1-d_1q_2)(a+1)a}{2d_1d_2},$$
which is zero by~\eqref{eq:pq1}. The result follows direct calculation of $h^0(\cO_S(K-D))=h^2(\cO_S(D))=0$ (by Serre's duality).

For the second part one has to apply a different formula for $h^0(\cO_S(D))$ which is independent of $a$, namely 
$$
h^0(\cO_S(D))=T_+ + T_- - \binom{b+1}{2}
$$
--\,in Figure~\eqref{fig:h01b} the dimension is given by the solid blue dots. However, note that the formula for $\chi(\cO_S(D))$ 
given as in Lemma~\ref{lemma:chigen} is quadratic in $a$. An alternative way to see this is using the triangles $T_\alpha$ and 
$T_\beta$ from the previous paragraph. The formula $\chi(\cO_S(D))=\binom{a+b+2}{2}-\binom{b+1}{2}-\# T_\alpha - \# T_\beta$ is 
still true, however the right-hand side now equals $h^0(\cO_S(D))-\# T_0$. Since $h^2(\cO_S(D))=0$ as above, one obtains 
$h^1(\cO_S(D))=\# T_0$. Using the techniques introduced in Section~\ref{sec:RRTsurfaces-specialtype} one can rewrite the number of 
integer points in a triangle as the dimension of the space of global sections of some weighted projective space. 
In this case one can check that
$$
\array{rcl}
\# T_0 & = & h^0(\PP^2_w, \cO(\rho))\\
&=&g_{w,\rho}-\Delta_{\frac{1}{d_1}(d_2,rd_1d_2)}(d_2\alpha-rd_1d_2a)
-\Delta_{\frac{1}{d_2}(d_1,rd_1d_2)}(d_1\beta-rd_1d_2a)\\
&&-\Delta_{\frac{1}{rd_1d_2}(d_1,d_2)}(d_1\beta+d_2\alpha-d_1d_2b-rd_1d_2a),
\endarray
$$
where $w=(d_1,d_2,rd_1d_2)$ and $\rho=rd_1d_2a-d_1d_2b-d_1\beta-d_2\alpha\geq 0$ by hypothesis.
\end{proof}

\subsection{Case \texorpdfstring{$k>0$}{k>0}}
In this case one can check that $-1<m(\ell_1)<0$ and $m(\ell_2)>0$. Hence again, for 
values $a<\frac{b+\frac{\alpha}{d_1}+\frac{\beta}{d_2}}{r}$ --\,see Figure~\eqref{fig:h01a}\,--
note that $h^0(\cO_S(D))$ depends on $a$, however if $a\geq \frac{b+\frac{\alpha}{d_1}+\frac{\beta}{d_2}}{r}$ 
--\,see Figure~\eqref{fig:h01b}\,--, then $h^0(\cO_S(D))$ is independent of $a$.

The case $a\geq \frac{b+\frac{\alpha}{d_1}+\frac{\beta}{d_2}}{r}$ however has a new situation that we cover in this result.

\begin{thm}
\label{thm:h02}
Assume $D\sim aZ+bF+\alpha E_X+\beta E_Y$, where $b\geq -1$,
$a\geq \frac{b+\frac{\alpha}{d_1}+\frac{\beta}{d_2}}{r}\geq 0$, and $-q_1a\geq d_1b+\alpha$, then
$$
h^0(\cO_S(D))=h^0(\PP^2_{w_1}, \cO(\rho_1))-h^0(\PP^2_{w_2}, \cO(\rho_2))-\binom{b+1}{2},
$$
where $w_1=(1,p_1,-q_1)$, $\rho_1=bp_1+\alpha$, $w_2=(-q_1,q_2,rd_1d_2)$, and $\rho_2=q_2d_1b+\alpha q_2+\beta q_1$. 
\end{thm}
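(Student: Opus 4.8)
The plan is to follow the lattice-point strategy already used for Theorems~\ref{thm:main} and~\ref{thm:main2}: reduce $h^0(\cO_S(D))$ to a count of admissible monomials and then express that count through $0$-cohomology on weighted projective planes via the substitution of~\eqref{eq:Tbeta}. First I would recall that $H^0(S,\cO_S(D))$ is spanned by the monomials $X^uY^v$ with $u,v\geq 0$, $b\leq u+v\leq a+b$, $q_2u-p_2v\leq q_2b+\beta$ (call this $\ell_2$) and $q_1u-p_1v\geq -p_1b-\alpha$ (call this $\ell_1$), and translate the hypotheses into the shape of this polygon. Since $k>0$ one has $m(\ell_1)=q_1/p_1\in(-1,0)$, so $0<-q_1<p_1$ and $p_1+q_1=d_1$. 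The inequality $a\geq\tfrac{b+\alpha/d_1+\beta/d_2}{r}$ puts the whole region below the intersection point $\ell_1\cap\ell_2$ computed in the introductory discussion, so the bound $u+v\leq a+b$ no longer clips it and $V_0$ is independent of $a$ (the situation of Figure~\eqref{fig:h01b}). The extra hypothesis $-q_1a\geq d_1b+\alpha$ is precisely the statement that the vertex $\bigl(\tfrac{p_1b+\alpha}{-q_1},0\bigr)$ of the triangle cut off the axes by $\ell_1$ also satisfies $u+v\leq a+b$ (here $p_1+q_1=d_1$ is used), so that the entire admissible region sits below $u+v=a+b$.

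With the shape fixed, I would decompose the count as
\[
h^0(\cO_S(D))=\#R_1-\#R_2-\binom{b+1}{2},
\]
where $R_1=\{u,v\geq 0,\ q_1u-p_1v\geq -p_1b-\alpha\}$ is the full triangle bounded by the two axes and $\ell_1$, and $R_2=\{(u,v)\in R_1\mid q_2u-p_2v> q_2b+\beta\}$ is the corner of $R_1$ violating $\ell_2$. The term $\binom{b+1}{2}$ removes the small corner $\{u,v\geq 0,\ u+v<b\}$. To justify this I would check three containments: that $\{u+v<b\}\subseteq R_1$ (immediate from $-q_1<p_1$), that $\{u+v<b\}$ satisfies $\ell_2$ (since $q_2u-p_2v\leq q_2u\leq q_2(u+v)<q_2b$) and is therefore disjoint from $R_2$, and that any point of $R_2$ automatically has $u+v\geq b$ (from $q_2u\geq q_2b+p_2v\geq q_2b$). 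These give a genuine partition of $R_1$, so the displayed identity holds.

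Next, for each triangle I would produce an affine bijection $(u,v)\mapsto(i,j,k)\in\ZZ_{\geq 0}^3$ onto the solutions of a single weighted equation, exactly as in~\eqref{eq:Tbeta}. For $R_1$, setting $i=q_1u-p_1v+p_1b+\alpha$, $j=v$, $k=u$ yields $i+p_1j-q_1k=bp_1+\alpha$, which identifies $\#R_1$ with the number of monomials of weighted degree $\rho_1=bp_1+\alpha$ for $w_1=(1,p_1,-q_1)$, i.e.\ with $h^0(\PP^2_{w_1},\cO(\rho_1))$. For $R_2$, reading the three bounding edges $\ell_2$, $v=0$ and $\ell_1$ off as the coordinates $i,j,k$ and using $q_2p_1-q_1p_2=rd_1d_2$ from~\eqref{eq:pq1}, the unique affine relation among them has coefficients $(-q_1,q_2,rd_1d_2)$, producing the weight $w_2=(-q_1,q_2,rd_1d_2)$ and degree $\rho_2=q_2d_1b+\alpha q_2+\beta q_1$, so $\#R_2=h^0(\PP^2_{w_2},\cO(\rho_2))$. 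No vanishing theorem is needed, since by definition $h^0(\PP^2_w,\cO(\rho))$ equals the number of lattice points of the corresponding weighted simplex; adding the three contributions then gives the stated formula.

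The main obstacle is the boundary bookkeeping of the two substitutions. Because passing from $k=0$ to $k>0$ flips the sign of $m(\ell_1)$, one must verify that $-q_1$ is a legitimate positive weight and that $R_2\subseteq R_1$ records exactly the monomials excluded by $\ell_2$. The delicate point is the treatment of the lattice points lying on $\ell_2$ itself (which belong to $V_0$ since $\ell_2$ is a non-strict constraint) together with the $\pm1$ shifts built into the coordinate $i=q_2u-p_2v-q_2b-\beta$: it is this convention along $\ell_2$ that fixes the constant term and yields precisely $\rho_2=q_2d_1b+\alpha q_2+\beta q_1$ rather than a value shifted by $q_1$. Once the strict/non-strict convention is pinned down consistently with the partition of the previous paragraph, the identification of each $\#R_i$ with the asserted $h^0(\PP^2_{w_i},\cO(\rho_i))$ is a routine verification.
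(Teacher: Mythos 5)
The paper states Theorem~\ref{thm:h02} \emph{without proof}, so there is no argument of the authors to compare yours against; I can only assess your proposal on its own terms. Your global strategy is certainly the intended one: under the stated hypotheses the constraint $u+v\leq a+b$ becomes inactive (your reading of $-q_1a\geq d_1b+\alpha$ as the condition that the $u$-intercept of $\ell_1$ lies below $u+v=a+b$, using $p_1+q_1=d_1$ and $0<-q_1<p_1$, is correct), the decomposition $h^0(\cO_S(D))=\#R_1-\#R_2-\binom{b+1}{2}$ is a genuine partition for exactly the reasons you give, and the identification $\#R_1=h^0(\PP^2_{w_1},\cO(\rho_1))$ is sound because two of the three new coordinates are $u$ and $v$ themselves, so the substitution really is a bijection onto the lattice points of the weighted simplex.

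The gap is in the treatment of $R_2$, and it is not a bookkeeping detail. First, with the strict convention $q_2u-p_2v>q_2b+\beta$ --- the one forced by your own partition, since lattice points on $\ell_2$ belong to $V_0$ --- the constant of the affine relation is $(-q_1)(-q_2b-\beta-1)+q_2(p_1b+\alpha)=q_2d_1b+q_2\alpha+q_1\beta+q_1$, i.e.\ $\rho_2+q_1$, not $\rho_2$. You notice this discrepancy and resolve it by adopting the non-strict coordinate $i=q_2u-p_2v-q_2b-\beta$, but that choice places the points of $\ell_2\cap R_1$ into the subtracted set, contradicting the partition you just established; the two conventions differ by exactly the number of such points, which is nonzero in general. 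For instance, for $d_1=d_2=2$, $n_1=n_2=1$, $k=1$ (so $p_1=3$, $q_1=-1$, $q_2=p_2=1$, $r=1$) and $D=2Z+E_X+E_Y$, the monomial count is $\#V_0=2$ while the stated formula gives $2-1-0=1$; the shifted degree $\rho_2+q_1$ repairs this example. Second, and independently, the substitution for $R_2$ uses as coordinates the three ``distances'' to $\ell_1$, $\ell_2$ and $\{v=0\}$, none of which is $u$ or $v$; the image of $\ZZ^2$ in the affine plane $(-q_1)i_1+q_2i_3+rd_1d_2j=\mathrm{const}$ is a sublattice whose index is the gcd of the $2\times 2$ minors, namely $\gcd(q_1,q_2,rd_1d_2)=\gcd(q_1,q_2)$. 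Whenever $\gcd(q_1,q_2)>1$ (e.g.\ $d_1=d_2=5$, $n_1=n_2=2$, $k=1$, where $q_1=-2$, $q_2=2$) the triangle $R_2$ contains strictly fewer lattice points than the weighted simplex, and one checks on $D=2Z+4E_X$ that neither $\rho_2$ nor $\rho_2+q_1$ gives the right answer. So the step ``$\#R_2=h^0(\PP^2_{w_2},\cO(\rho_2))$'' is not a routine verification: a correct proof along these lines must use the shifted degree \emph{and} must either establish $\gcd(q_1,q_2)=1$ or replace the weighted-plane count by a count in the correct index-$\gcd(q_1,q_2)$ sublattice; as it stands the proposal (and, apparently, the statement itself) needs repair at this point.
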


\section{Cyclic branched coverings on rational ruled toric surfaces}\label{sec:esnault}

Let $S$ be a rational ruled toric surface as in Definition~\ref{def:trrs} and consider
$\pi: \wt{S} \to S$ a cyclic branched covering of $n$ sheets. Assume the ramification set
is given by a $\QQ$-normal crossing Weil divisor $D=\sum_{i=1}^r m_i D_i$ linearly equivalent to $nH$
for some Weil divisor $H$. One is interested in studying the monodromy of the covering acting
on $H^1(\wt{S},\CC) = H^0(\wt{S},\Omega^1_{\wt{S}}) \oplus H^1(\wt{S},\cO_{\wt{S}})$.
Since both summands are complex conjugated
the decomposition becomes
$$
H^1(\wt{S},\CC) = H^1(\wt{S},\cO_{\wt{S}}) \oplus \overline{H^1(\wt{S},\cO_{\wt{S}})}.
$$

In the smooth case, Esnault and Viehweg~\cite{Esnault-Viehweg82} proved that
$H^1(\wt{S},\cO_{\wt{S}}) = H^1(S,\pi_{*}\cO_{\wt{S}})$ and provided a precise
description of the sheaf $\pi_{*}\cO_{\wt{S}}$ giving its Hodge structure
compatible with the monodromy of the covering. In a forthcoming paper we will
prove that the same is true for surfaces with abelian quotient singularities.
In particular, the following result holds.

\begin{thm}\label{thm:Esnault}
Using the previous notation,
$$
  H^1(\wt{S},\cO_{\wt{S}}) = \bigoplus_{k=0}^{n-1} H^1 (S, \cO_S(L^{(k)})), \qquad
  L^{(k)} = -k H + \sum_{i=1}^r \floor{\frac{km_i}{n}} D_i,
$$
where the monodromy of the cyclic covering acts on $H^q(S,\cO_S(L^{(k)}))$ by multiplication by~$e^{\frac{2\pi ik}{n}}$.
\end{thm}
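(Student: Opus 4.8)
The plan is to follow the classical strategy of Esnault--Viehweg, decomposing the pushforward $\pi_*\cO_{\wt S}$ under the action of the Galois group $\mu_n$ and then controlling the behaviour of the resulting eigensheaves across the quotient singular points. First I would observe that, being a cyclic covering of degree $n$, the morphism $\pi$ is finite and hence affine, so that $R^q\pi_*\cO_{\wt S}=0$ for every $q>0$. The Leray spectral sequence then degenerates and yields the identification $H^q(\wt S,\cO_{\wt S})=H^q(S,\pi_*\cO_{\wt S})$ for all $q$; in particular this recovers for $q=1$ the identity quoted from the smooth theory, now without any smoothness hypothesis. Since $\mu_n$ acts on $\wt S$ as the group of deck transformations, it acts on the coherent sheaf $\pi_*\cO_{\wt S}$, and because we work over $\CC$ this action splits the sheaf into the direct sum of its $n$ isotypic components
$$
\pi_*\cO_{\wt S}=\bigoplus_{k=0}^{n-1}(\pi_*\cO_{\wt S})_k,
$$
where a fixed generator of $\mu_n$ acts on the $k$-th summand by $e^{2\pi i k/n}$. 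Taking cohomology commutes with this finite direct sum, so the monodromy statement follows at once once each summand is identified.

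The core of the argument is the identification $(\pi_*\cO_{\wt S})_k\cong\cO_S(L^{(k)})$. Over the smooth locus $S_0:=S\setminus\Sing(S)$ this is precisely the Esnault--Viehweg computation for the normalized cyclic cover: writing $L=\cO_S(H)$, so that $L^{\otimes n}\cong\cO_S(D)$, the covering is the normalization of the relative spectrum of $\bigoplus_{k=0}^{n-1}L^{-k}$ with algebra structure induced by the section cutting out $D$, and the $k$-th eigensheaf of its pushforward is $L^{-k}\bigl(\sum_i\floor{km_i/n}D_i\bigr)$. The twist $L^{-k}$ contributes $-kH$, while the normalization forces the integral corrections $\floor{km_i/n}D_i$ along the branch components. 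Restricting to $S_0$ this is exactly $\cO_{S_0}(L^{(k)}|_{S_0})$ with $L^{(k)}=-kH+\sum_i\floor{km_i/n}D_i$.

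It remains to extend this identification over the singular points. Here I would use that $\wt S$ is normal and $\pi$ finite, which forces $\pi_*\cO_{\wt S}$, and hence each of its direct summands $(\pi_*\cO_{\wt S})_k$, to satisfy Serre's condition $S_2$; a rank-one $S_2$ (equivalently reflexive) coherent sheaf on a normal surface is recovered from its restriction to $S_0$ by the functor $i_*$, where $i:S_0\hookrightarrow S$ is the inclusion. Since by definition $\cO_S(L^{(k)})=i_*\cO_{S_0}(L^{(k)}|_{S_0})$, the isomorphism established on $S_0$ propagates uniquely to an isomorphism of sheaves on all of $S$. Substituting into the Leray identification and matching eigenvalues then gives $H^1(\wt S,\cO_{\wt S})=\bigoplus_{k}H^1(S,\cO_S(L^{(k)}))$ together with the asserted monodromy action.

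The hard part will be the step over the quotient singularities. Two points require genuine care. The first is ensuring that $\wt S$ is normal and that $\pi_*\cO_{\wt S}$ is indeed $S_2$; this depends on the covering datum being admissible, that is, on the $\QQ$-normal crossing hypothesis on $D$ and the compatibility $D\sim nH$, and it is exactly where information beyond the codimension-one branch locus enters: as noted in the Introduction, the codimension-two ramification over the singular points is not determined by $D$ alone. The second is that the local model of the cyclic cover over a cyclic quotient singularity $\frac{1}{d}(1,\cdot)$ is itself a quotient, so the local eigensheaf charges acquire an extra contribution from the ambient $\mu_d$-action; verifying that these corrections are already absorbed by the floor terms $\floor{km_i/n}$ and by passage to the reflexive hull is the delicate computation that the promised extension of Esnault--Viehweg theory to the quotient-singularity setting is designed to carry out.
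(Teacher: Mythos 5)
Your proposal cannot be checked against an in-paper argument, because the paper does not prove Theorem~\ref{thm:Esnault}: it explicitly defers the extension of Esnault--Viehweg to surfaces with abelian quotient singularities to a forthcoming paper and simply records the statement. Measured against the strategy the paper invokes, your outline is the right one and is essentially complete. The chain \emph{finite morphism $\Rightarrow$ Leray degeneration}, \emph{$\mu_n$-eigensheaf decomposition of $\pi_*\cO_{\wt S}$}, \emph{classical Esnault--Viehweg identification over the smooth locus $S_0$}, and \emph{unique reflexive extension across the finitely many singular points} is exactly how one expects the forthcoming proof to go, and it matches the paper's conventions: $\cO_S(L^{(k)})$ is \emph{defined} in Section~\ref{sec:prelim} as $i_*(\cO_{S_0}(L^{(k)}|_{S_0}))$, so once the $k$-th eigensheaf is known to be $S_2$ (a direct summand of the pushforward of $\cO_{\wt S}$ from a normal surface under a finite map) and to agree with $\cO_S(L^{(k)})$ on $S_0$, the two sheaves coincide. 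For this reason your closing paragraph is more pessimistic than it needs to be: no separate local computation of ``eigensheaf charges'' over the $\frac{1}{d}(1,\cdot)$ points is required, precisely because the reflexive hull determines the sheaf from codimension one. The one genuinely non-formal input you do need, and which you correctly identify, is that the covering is determined by the full datum $(S,D,H,n)$ rather than by $D$ alone --- the choice of $H$ in a torsion class of $\Pic(S)$ changes the codimension-two ramification, which is the subtlety the paper emphasizes in its introduction and illustrates in Examples~\ref{ex:covering12} and~\ref{ex:covering}; your construction of $\wt S$ as the normalization of the relative spectrum built from $H$ builds this in correctly.
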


\begin{obs}
Note that any covering can be encoded by the data $(S,D,H,n)$ such that $D,H \in \Pic(S)$ and $D \sim nH$.
If $\Pic(S)$ is torsion free, e.g.~if $S$ is smooth and rational, then the divisor $H$ is uniquely determined by $(S,D,n)$. 
Otherwise, this is no longer true, both if $S$ is rational --\,see Example~\ref{ex:covering}\,-- or even if $S$ is smooth --\,for instance
an Enriques surface $S$ where $D=0$, $n=2$, and $H$ can be chosen to be either $H=0$ or $H=K_S$ the canonical divisor.
\end{obs}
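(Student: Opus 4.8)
The plan is to reduce the entire statement to an elementary observation about the group structure of $\Pic(S)$. The defining condition $D\sim nH$ reads $[D]=n[H]$ in $\Pic(S)$, so I would first record that if $H_1$ and $H_2$ are two admissible choices, then $n([H_1]-[H_2])=0$; that is, the difference of any two solutions is an $n$-torsion class. This single remark governs both halves of the statement.

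For the torsion-free case, the relation $n([H_1]-[H_2])=0$ forces $[H_1]=[H_2]$, and since divisors are taken modulo linear equivalence this means $H$ is determined uniquely by the data $(S,D,n)$. The hypothesis holds in particular when $S$ is smooth and rational, where $\Pic(S)$ is free. For the converse I would simply exhibit a nonzero class $T\in\Pic(S)$ annihilated by $n$; then $H+T$ is a second admissible divisor, since $n(H+T)\sim nH\sim D$, while $H+T\not\sim H$. For the rational example I would invoke the explicit presentation $\Pic(S)\cong\ZZ\, Z\times\ZZ\, E_X\times(\ZZ/d\ZZ)\,T$ from \eqref{eq:Picard_S}, whose torsion subgroup is cyclic of order $d$; choosing $n$ with $\gcd(n,d)>1$ produces a nonzero class killed by $n$, realized concretely in Example~\ref{ex:covering}. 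For the Enriques example I would use the classical fact that $\Pic(S)$ has torsion subgroup $\ZZ/2\ZZ$ generated by the canonical class $K_S$ with $2K_S\sim 0$; then, with $D=0$ and $n=2$, both $H=0$ and $H=K_S$ satisfy $2H\sim 0\sim D$, yet $K_S\not\sim 0$.

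The only point requiring input beyond linear algebra over $\ZZ$ is verifying that each advertised example genuinely carries the claimed torsion. In the rational case this is immediate from the Picard group computation already performed in \eqref{eq:Picard_S}, so no new work is needed. In the Enriques case it rests on the classical computation that $K_S$ is a nonzero $2$-torsion element of $\Pic(S)$, which I would quote rather than reprove. Thus the expected obstacle is not a difficulty of argument but merely the bookkeeping of recalling the correct torsion in the two sample surfaces; the core implication in both directions is the torsion dichotomy isolated in the first paragraph.
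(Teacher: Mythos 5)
Your proposal is correct and follows essentially the same route as the paper: the remark's content is precisely the observation that two choices of $H$ differ by an $n$-torsion class in $\Pic(S)$, with non-uniqueness witnessed by the torsion $\ZZ/d\ZZ$ factor in~\eqref{eq:Picard_S} (as in Example~\ref{ex:covering}) and by the $2$-torsion canonical class of an Enriques surface. The paper leaves this reasoning implicit, so your explicit formulation of the torsion dichotomy is a faithful, slightly more detailed rendering of the same argument.
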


\subsection{The divisor \texorpdfstring{$L^{(k)}$}{Lk}} Let us white $D$ in terms of its irreducible components
$D = \sum_{i=1}^r m_i D_i$ and suppose $D_i \sim a_i Z + \alpha_i E_X + \beta_i E_Y$ and $H \sim z Z + e_x E_X + e_y E_Y$
for some $a_i,\alpha_i,\beta_i,z,e_x,e_y \in \mathbb{Z}$. Recall $F \cdot E_X = F \cdot E_Y = 0$ and $F \cdot Z = 1$.
Assume without loss of generality that $D$ is an effective divisor.
Since $D$ is linearly equivalent to $nH$, $D \cdot F = nH \cdot F$ and hence $\sum_{i=1}^r m_i a_i = nz$.
Analogously, using $D \cdot Z = nH \cdot Z$ and the fact that $Z^2=0$, $Z \cdot E_X = Z \cdot E_Y = \frac{1}{d}$,
one obtains $\sum_{i=1}^r m_i(\alpha_i + \beta_i) = n(e_x+e_y)$.

The divisor $L^{(k)} = -kH + \sum_{i=1}^r \floor{\frac{km_i}{n}} D_i \in \Pic(S)$ can be
rewritten as $u_k Z + v_k E_X + w_k E_Y$, where the coefficients are given by
$$
u_k = -kz+\sum_{i=1}^r \floor{\frac{km_i}{n}}a_i, \quad
v_k = -ke_x + \sum_{i=1}^r \floor{\frac{km_i}{n}}\alpha_i, \quad
w_k = -ke_y + \sum_{i=1}^r \floor{\frac{km_i}{n}}\beta_i.
$$

Note that
\begin{equation}\label{eq:uvw}
u_k = \sum_{i=1}^r \left( -\frac{km_i}{n} + \floor{\frac{km_i}{n}} \right) a_i, \qquad
v_k + w_k = \sum_{i=1}^r \left( -\frac{km_i}{n} + \floor{\frac{km_i}{n}} \right) (\alpha_i + \beta_i).
\end{equation}
Since all $D_i$'s are effective divisors, $a_i = D_i \cdot F \geq 0$ and $\alpha_i+\beta_i = D_i \cdot (dZ) \geq 0$.
In particular, $u_k \leq 0$ and $v_k+w_k\leq 0$ for all $k = 0,\ldots,n-1$.

\subsection{The first cohomology group of \texorpdfstring{$\cO_S(L^{(k)})$}{OSLk}}
According to Theorem~\ref{thm:Esnault}, one is interested in computing $h^1(S,\cO_S(L^{(k)}))$.
To do so we will apply Theorem~\ref{thm:main} to the divisor~$L^{(k)}$.
First one needs to divide $v_k$ and $w_k$ by $d$, that is, $v_k = c_{k,1} d + r_{k,1}$ and $w_k=c_{k,2}d+r_{k,2}$,
so that $L^{(k)} \sim u_k Z + (c_{k,1}+c_{k,2}) F + r_{k,1} E_X + r_{k,2} E_Y$.
Four different cases are discussed.
\begin{enumerate}
\item $u_k < 0$ and $v_k+w_k<0$. Then $c_{k,1}+c_{k,2} = \frac{v_k+w_k}{d}-\frac{r_{k,1}+r_{k,2}}{d} < 0$.
If either $u_k=-1$; $u_k < -1$ and $c_{k,1}+c_{k,2}<-1$; or $u_k < -1$, $c_{k,1}+c_{k,2}=-1$, and $r_{k,1}+r_{k,2} < d$,
then $h^1(S,\cO_S(L^{(k)}))=0$. Note that the case $u_k < -1$, $c_{k,1}+c_{k,2}=-1$, and $r_{k,1}+r_{k,2} \geq d$
cannot occur.
\item $u_k = 0$ and $v_k+w_k = 0$. Then $c_{k,1}+c_{k,2} = - \frac{r_{k,1}+r_{k,2}}{d}$ can be either zero or minus one.
In both cases, the first cohomology group vanishes.
\item $u_k = 0$ and $v_k+w_k < 0$. Then as above $c_{k,1}+c_{k,2}<0$.
The case $c_{k,1}+c_{k,2}=-1$ and $r_{k,1}+r_{k,2} \geq d$ is not compatible with $v_k+w_k<0$.
Thus $h^1(S,\cO_S(L^{(k)})) = -\chi(S,\cO_S(L^{(k)})) = -1 - \floor{\frac{v_k}{d}} - \floor{\frac{w_k}{d}}$.
The latter formula holds by Lemma~\ref{one-direction-euler}, see below. In particular, if $v_k+w_k=-1$,
then the dimension is zero.
\item $u_k < 0$ and $v_k+w_k = 0$. The sheaf $L^{(k)}$ is acyclic for $u_{k}=-1$. If $u_{k}<-1$, then
$h^1(S,\cO_S(L^{(k)})) = -\chi(S,\cO_S(L^{(k)})) = -1 - \floor{\frac{u_k-v_kp'}{d}} - \floor{\frac{v_kp'}{d}}$,
where $p'p \equiv 1 \! \mod d$, see Lemma~\ref{one-direction-euler}.
Note that the previous formula still holds for $u_k=-1$, since in such a case it vanishes.
\end{enumerate}

\begin{lema}\label{one-direction-euler}
$\chi(S,\cO_S(v_k E_X + w_k E_Y)) = 1 + \floor{\frac{v_k}{d}} + \floor{\frac{w_k}{d}}$ and
$\chi(S,\cO_S(u_k Z + v_k (E_X - E_Y))) = 1 + \floor{\frac{u_k-v_k p'}{d}} + \floor{\frac{v_k p'}{d}}$.
\end{lema}

\begin{proof}
As above, using the divisions $v_k = c_{k,1} d + r_{k,1}$ and $w_k=c_{k,2}d+r_{k,2}$, one writes
the divisor $v_k E_X + w_k E_Y$ as $(c_{k,1}+c_{k,2})F + r_{k,1} E_X + r_{k,2} E_Y$.
By Lemma~\ref{lemma:chigen},
$$
\begin{aligned}
\chi(S,\cO_S(v_k E_X + w_k E_Y)) &= 1 + c_{k,1} + c_{k,2} + \frac{r_{k,1}}{d} + \frac{r_{k,2}}{d}
- \Delta_{\frac{1}{d}(1,q)}(-r_{k,1}) - \Delta_{\frac{1}{d}(1,p)}(-r_{k,2}) \\
& \quad - \Delta_{\frac{1}{d}(1,p)}(-r_{k,1}) - \Delta_{\frac{1}{d}(1,q)}(-r_{k,2}).
\end{aligned}
$$
Note that by definition $c_{k,1} = \floor{\frac{v_{k}}{d}}$ and $c_{k,2} = \floor{\frac{w_{k}}{d}}$.
The first part of the statement follows since, by Proposition~\ref{prop:Delta},
$\Delta_{\frac{1}{d}(1,p)}(-r_{k,i}) + \Delta_{\frac{1}{d}(1,q)}(-r_{k,i}) = \frac{r_{k,i}}{d}$, $i=1,2$.
For the second part, one uses the relation calculated in~\eqref{eq:Picard_S} to rewrite the divisor
$u_k Z + v_k(E_X-E_Y) \in \Pic(S)$ as $(u_k-v_k p') Z + v_kp' E$. The symmetry of the surface $S$
completes the proof.
\end{proof}

The previous discussion can be summarized in the following result.

\begin{prop}\label{eq:h1Lk}
The first cohomology group of the sheaf $\cO_S(L^{(k)})$, being $L^{(k)} = u_k Z + v_k E_X + w_k E_Y$, is
\begin{equation*}
h^1(S,\cO_S(L^{(k)})) = 
\begin{cases}
-1 - \floor{\frac{v_k}{d}} - \floor{\frac{w_k}{d}} & u_k=0, \ v_k+w_k \leq -2, \\
-1 - \floor{\frac{u_k-v_kp'}{d}} - \floor{\frac{v_kp'}{d}} & v_k+w_k=0, \ u_k \leq -2, \\
0 & \text{otherwise}.
\end{cases}
\end{equation*}
\end{prop}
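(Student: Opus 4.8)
The plan is to read Proposition~\ref{eq:h1Lk} as the organized outcome of applying Theorem~\ref{thm:main} to $L^{(k)}$, so the proof reduces to putting $L^{(k)}$ into the normal form required by that theorem and then tracking which cell of Table~\ref{tabla} is selected. First I would rewrite $L^{(k)}=u_kZ+v_kE_X+w_kE_Y$ in the standard shape $aZ+bF+\alpha E_X+\beta E_Y$ with $0\le\alpha,\beta<d$: dividing $v_k=c_{k,1}d+r_{k,1}$ and $w_k=c_{k,2}d+r_{k,2}$ gives $a=u_k$, $b=c_{k,1}+c_{k,2}$, $\alpha=r_{k,1}$, $\beta=r_{k,2}$, so that $\floor{v_k/d}+\floor{w_k/d}=b$. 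The two global constraints $u_k\le 0$ and $v_k+w_k\le 0$, established from~\eqref{eq:uvw} together with the effectivity of the $D_i$, are what make the classification tractable, since they pin down the column of Table~\ref{tabla} and restrict the admissible values of $b$ and of $\alpha+\beta$.

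Next I would run Theorem~\ref{thm:main} according to the sign of $a=u_k$. When $u_k=-1$ the sheaf is acyclic and $h^1=0$ immediately. When $u_k=0$ we sit in the column $a>-1$: the condition $v_k+w_k=0$ forces $(b,\alpha+\beta)\in\{(0,0),(-1,d)\}$, both landing in the $k=0$ cell, whereas $v_k+w_k<0$ forces $b\le-1$ with $\alpha+\beta<d$, landing in the $k=1$ cell. When $u_k\le-2$ we sit in the column $a<-1$: here the $k=1$ cell requires $b=-1$ and $\alpha+\beta\ge d$, which by the relation $v_k+w_k=bd+(r_{k,1}+r_{k,2})$ is compatible with $v_k+w_k\le0$ only when $v_k+w_k=0$; otherwise one falls in the $k=2$ cell with $h^1=0$. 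Collecting these, the only two families with $h^1\ne0$ are $u_k=0,\ v_k+w_k<0$ and $u_k\le-2,\ v_k+w_k=0$, matching the two nonzero branches of the statement.

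On those two families the concentration in degree one gives $h^1=-\chi(\cO_S(L^{(k)}))$, and the explicit formulas then come directly from Lemma~\ref{one-direction-euler}: its first part applied to $L^{(k)}=v_kE_X+w_kE_Y$ for the $u_k=0$ family, and its second part applied to $L^{(k)}=u_kZ+v_k(E_X-E_Y)$ for the $v_k+w_k=0$ family (where $w_k=-v_k$), the latter after trading the class $u_kZ+v_k(E_X-E_Y)$ for $(u_k-v_kp')Z+v_kp'E$ via the torsion relation of~\eqref{eq:Picard_S} and invoking the symmetry of $S$. This produces $-1-\floor{v_k/d}-\floor{w_k/d}$ and $-1-\floor{(u_k-v_kp')/d}-\floor{v_kp'/d}$ respectively.

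The hard part, and really the only place where care is needed, is the bookkeeping at the boundaries of the nonzero regions and the verification that no forbidden configuration survives. Concretely I would check that at $v_k+w_k=-1$ one has $b=\floor{v_k/d}+\floor{w_k/d}=-1$, so the first formula evaluates to $0$ and merges correctly into the \emph{otherwise} branch, and similarly that at $u_k=-1$ the second formula vanishes in accordance with acyclicity; and I would confirm that $b=-1$ with $\alpha+\beta\ge d$ is genuinely incompatible with $v_k+w_k<0$, the step that prevents the wrong cell of Table~\ref{tabla} from being chosen. Once these compatibility checks are in place, the three-branch formula of Proposition~\ref{eq:h1Lk} follows by simply assembling the cases.
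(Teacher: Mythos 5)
Your proposal is correct and follows essentially the same route as the paper: normalize $L^{(k)}$ to the form $aZ+bF+\alpha E_X+\beta E_Y$ with $b=\floor{v_k/d}+\floor{w_k/d}$, use the global constraints $u_k\le 0$ and $v_k+w_k\le 0$ to locate the relevant cells of Table~\ref{tabla} via Theorem~\ref{thm:main}, and evaluate the two nonvanishing branches with Lemma~\ref{one-direction-euler}. The only cosmetic remark is that for $u_k\le -2$ the $k=1$ cell also contains the subcase $b>-1$ (i.e.\ $b=0$), but under $v_k+w_k\le 0$ this likewise forces $v_k+w_k=0$, so your case analysis and conclusion are unaffected.
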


\subsection{Decomposition of \texorpdfstring{$H^1(\wt{S},\cO_{\wt{S}})$}{H1OStilde}}
Note that $u_k = 0$ happens only when $\frac{km_i}{n} \in \ZZ$ for all $i \in I_1:= \{ i \mid a_i \neq 0 \}$,
see~\eqref{eq:uvw}, or equivalently, when $\frac{n}{\gcd(n,\{m_i\}_{i\in I})}$ divides $k$.
Let us denote by $n_1 = \gcd(n,\{m_i\}_{i\in I_1})$ and $k=\frac{n}{n_1}k_1$, where $k_1$ runs from $0$ to $n_1-1$.
Analogously, consider $I_2:= \{ i \mid \alpha_i + \beta_i \neq 0 \}$, then $v_k+w_k \neq 0$ holds if and only if
$\frac{n}{\gcd(n,\{m_i\}_{i\in I_2})}$ divides $k$ -- denote by $n_2 = \gcd(n,\{m_i\}_{i\in I_2})$, $k=\frac{n}{n_2}k_2$,
$k_2 = 0,\ldots, n_2-1$.
The direct sum in Theorem~\ref{thm:Esnault} splits into two parts as follows,
\begin{equation}\label{eq:decomp_H1}
  H^1(\wt{S},\cO_{\wt{S}}) = \bigoplus_{k_1=0}^{n_1-1} H^1 (S, \cO_S(L^{(\frac{nk_1}{n_1})}))
  \oplus \bigoplus_{k_2=0}^{n_2-1} H^1 (S, \cO_S(L^{(\frac{nk_2}{n_2})})).
\end{equation}

Let $\pi_j: \wt{S}_j \to S$, $j=1,2$, be the branched covering of $n_j$ sheets whose ramification set is
given by $D=\sum_{i=1} m_i D_i$ with associated divisor $H_j=\frac{n}{n_j}H$. A simple observation
shows that the first (resp.~second) direct sum in~\eqref{eq:decomp_H1} coincides with $H^1 (\wt{S}_1,\cO_{\wt{S}_1})$
(resp.~$H^1 (\wt{S}_2,\cO_{\wt{S}_2})$) and the decomposition is compatible with the monodromy
of $\pi_1$ (resp.~$\pi_2$), since $e^{\frac{2\pi ik}{n}} = e^{\frac{2 \pi i k_1}{n_1}}$
(resp.~$e^{\frac{2\pi ik}{n}} = e^{\frac{2 \pi i k_2}{n_2}}$).

We have just proven the next result.

\begin{thm}
The characteristic polynomial of the monodromy of $\pi:\wt{S} \to S$, $\pi_1:\wt{S}_1 \to S$,
and $\pi_2:\wt{S}_2\to S$ satisfy
$$
p_{H^1 (\wt{S},\cO_{\wt{S}})}(t)
=p_{H^1 (\wt{S}_1,\cO_{\wt{S}_1})}(t) \cdot p_{H^1 (\wt{S}_2,\cO_{\wt{S}_2})}(t),
$$
and moreover the equality can be understood at the level of its Hodge structure.
Using Proposition~\eqref{eq:h1Lk}, explicit formulas for the characteristic polynomials
$p_{H^1 (\wt{S}_j,\cO_{\wt{S}_j})}(t)$ can be calculated.
\end{thm}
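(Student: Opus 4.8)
The plan is to deduce the factorization directly from the monodromy-compatible splitting~\eqref{eq:decomp_H1} established just above, together with the elementary fact that the characteristic polynomial of a linear operator restricted to a direct sum of invariant subspaces is the product of the characteristic polynomials on the summands.

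First I would record, applying Theorem~\ref{thm:Esnault} to $\pi$ itself, that the monodromy acts on the summand $H^1(S,\cO_S(L^{(k)}))$ by the scalar $e^{\frac{2\pi ik}{n}}$, whence
\[
p_{H^1(\wt{S},\cO_{\wt{S}})}(t)=\prod_{k=0}^{n-1}\bigl(t-e^{\frac{2\pi ik}{n}}\bigr)^{h^1(S,\cO_S(L^{(k)}))}.
\]
Next I would identify the two direct sums in~\eqref{eq:decomp_H1} with the first cohomology of the intermediate covers. For $\pi_j$ the Esnault--Viehweg divisor is $L_j^{(k_j)}=-k_jH_j+\sum_i\floor{\frac{k_jm_i}{n_j}}D_i$; since $H_j=\frac{n}{n_j}H$ and $\floor{\frac{(nk_j/n_j)m_i}{n}}=\floor{\frac{k_jm_i}{n_j}}$, one gets $L_j^{(k_j)}=L^{(\frac{nk_j}{n_j})}$, so that Theorem~\ref{thm:Esnault} applied to $\pi_j$ reproduces exactly the $j$-th summand as $H^1(\wt{S}_j,\cO_{\wt{S}_j})$ with its own monodromy.

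The core step is then a matching of eigenvalues. On the first summand the $\pi$-monodromy eigenvalue attached to $k=\frac{n}{n_1}k_1$ is $e^{\frac{2\pi ik}{n}}=e^{\frac{2\pi ik_1}{n_1}}$, which is precisely the $\pi_1$-monodromy eigenvalue on $H^1(S,\cO_S(L_1^{(k_1)}))$; the same identity holds for $\pi_2$. Hence the restriction of the $\pi$-monodromy to each summand coincides with the monodromy of $\pi_j$, and factoring the characteristic polynomial across~\eqref{eq:decomp_H1} yields
\[
p_{H^1(\wt{S},\cO_{\wt{S}})}(t)=p_{H^1(\wt{S}_1,\cO_{\wt{S}_1})}(t)\cdot p_{H^1(\wt{S}_2,\cO_{\wt{S}_2})}(t).
\]
For the Hodge-theoretic refinement I would invoke that the (forthcoming) extension of Esnault--Viehweg realizes each $H^1(S,\cO_S(L^{(k)}))$ as the $e^{\frac{2\pi ik}{n}}$-eigenspace of the monodromy inside the Hodge structure on $H^1(\wt{S},\cO_{\wt{S}})$, so that~\eqref{eq:decomp_H1} is a decomposition of Hodge structures and the factorization persists at that level. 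The explicit formulas then follow by inserting Proposition~\eqref{eq:h1Lk} into $p_{H^1(\wt{S}_j,\cO_{\wt{S}_j})}(t)=\prod_{k_j=0}^{n_j-1}\bigl(t-e^{\frac{2\pi ik_j}{n_j}}\bigr)^{h^1(S,\cO_S(L^{(nk_j/n_j)}))}$.

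The one point that needs care---and the nearest thing to an obstacle---is the possible overlap of the two index sets, the multiples of $\frac{n}{n_1}$ and those of $\frac{n}{n_2}$ among $0\le k<n$, which could a priori cause double counting. When $k$ is a common multiple one has simultaneously $u_k=0$ and $v_k+w_k=0$ by~\eqref{eq:uvw}, and then Proposition~\eqref{eq:h1Lk} forces $h^1(S,\cO_S(L^{(k)}))=0$; such a $k$ therefore contributes only the trivial factor to all three polynomials, and the factorization is consistent. As this vanishing is already built into~\eqref{eq:decomp_H1}, the theorem is essentially a formal consequence of the preceding discussion and no substantive computation remains.
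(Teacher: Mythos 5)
Your proposal is correct and follows essentially the same route as the paper: the theorem is deduced from the splitting~\eqref{eq:decomp_H1}, the identification of its two summands with $H^1(\wt{S}_j,\cO_{\wt{S}_j})$ via Theorem~\ref{thm:Esnault} applied to $\pi_j$ (using $L_j^{(k_j)}=L^{(nk_j/n_j)}$), and the eigenvalue matching $e^{2\pi ik/n}=e^{2\pi ik_j/n_j}$. Your explicit treatment of the overlap of the two index sets (common multiples give $u_k=0$ and $v_k+w_k=0$, hence a vanishing summand by Proposition~\ref{eq:h1Lk}) is a point the paper leaves implicit in its ``simple observation,'' but it is the same argument.
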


Some examples will appear in the following section.

\section{Examples}

\begin{ejm}\label{ex:covering12}
Denote by $\widetilde S_i$ the covering of $S_I$ with $I=(d_1,d_2,p_1,q_2,r)$, where $d_1=d_2=12,p_1=1,q_2=11,r=0$, given by the data 
$n=12$ and $D=F\sim 12H_i$ for $H_i=E_X+i(E_X-E_Y)$, $i=0,\dots,11$. Due to the symmetry of the surface $S_I$, the covers $\widetilde S_i$ 
and $\widetilde S_{11-i}$ are isomorphic. 
In these cases $L_i^{(k)}=-k(E_X+i(E_X-E_Y))$ and $h^1(L_i^{(k)})=-1-\floor{\frac{-k-ki}{12}}-\floor{\frac{ki}{12}}$.
A straightforward calculation shows 

\begin{center}
\begin{tabular}{c|c|c|c|c|c|c|c|c|c|c|c|}
$h^1(L_i^{(k)})$ & $k=1$ & $2$ & $3$ & $4$& $5$ & $6$ & $7$ & $8$ & $9$& $10$ & $11$\\ \hline
$i=0$ & $0$ & $0$ & $0$ & $0$ & $0$ & $0$ & $0$ & $0$ & $0$ & $0$ & $0$\\ \hline
$i=1$ & $0$ & $0$ & $0$ & $0$ & $0$ & $0$ & $0$ & $1$ & $1$ & $1$ & $1$\\ \hline
$i=2$ & $0$ & $0$ & $0$ & $0$ & $1$ & $0$ & $0$ & $0$ & $1$ & $1$ & $1$\\ \hline
$i=3$ & $0$ & $0$ & $0$ & $0$ & $0$ & $0$ & $1$ & $0$ & $0$ & $1$ & $1$\\ \hline
$i=4$ & $0$ & $0$ & $0$ & $0$ & $1$ & $0$ & $0$ & $1$ & $0$ & $1$ & $1$\\ \hline
$i=5$ & $0$ & $0$ & $0$ & $0$ & $0$ & $0$ & $1$ & $0$ & $1$ & $0$ & $1$\\ \hline
\end{tabular}
\end{center}

\vspace*{14pt}
Using formula~\eqref{eq:decomp_H1} this shows that $h^1(\widetilde S_0)=0$, 
$h^1(\tilde S_3)=h^1(\widetilde S_5)=6$, and $h^1(\tilde S_2)=h^1(\widetilde S_4)=8$.
This already distinguishes the cases $i=0$, $\{3,5\}$ and $\{2,4\}$. In the remaining cases
their characteristic polynomial of the monodromy are different and hence the covers
$\tilde S_3\to S_3$ and $\tilde S_5\to S_5$ (resp. $\tilde S_2\to S_2$ and $\tilde S_4\to S_4$) 
can be distinguished.
\end{ejm}

\begin{ejm}\label{ex:covering}
Analogously to Example~\ref{ex:covering12} for the case $d_1=d_2=5,p_1=1,q_2=4,r=0$. 
A straightforward calculation shows 

\begin{center}
\begin{tabular}{c|c|c|c|c|c|c|}
$h^1(L_i^{(k)})$ & $k=0$ & $k=1$ & $k=2$ & $k=3$ & $k=4$\\ \hline
$i=0$ & $0$ & $0$ & $0$ & $0$ & $0$\\ \hline
$i=1$ & $0$ & $0$ & $0$ & $1$ & $1$\\ \hline
$i=2$ & $0$ & $0$ & $1$ & $0$ & $1$\\ \hline
\end{tabular}
\end{center}
\vspace*{14pt}
Again, using formula~\eqref{eq:decomp_H1}, $h^1(\widetilde S_0)=0$, $h^1(\tilde S_1)=h^1(\widetilde S_2)=4$.
This already distinguishes the cases $i=0$ and $i\neq 0$. However, note that the different eigenspaces of the monodromy action 
as described in Theorem~\ref{thm:Esnault} distinguishes the covers $\widetilde S_i\to S_i$ for the cases $i\in \{0,1,2\}$.
\end{ejm}

\begin{ejm}
We consider a surface $S$ obtained as follows. Start with $\bp^1\times\bp^1$ with two $0$-sections $S_0,S_\infty$
and four fibers $F_j$, $1\leq j\leq 4$. We perform four weighted Nagata transformations; we start with $(1,2)$-blow-ups
over $S_0\cap F_j$, $j=1,2$, and $S_\infty\cap F_j$, $j=3,4$. Blowing-down the strict transforms of the fibers,
we obtain a new surface with $8$ points of type $\frac{1}{2}(1,1)$ in the intersection points of the strict transforms
of $S_0,S_\infty$ (we keep their notation) with the four new fibers $\tilde{F}_j$, $1\leq j\leq 4$. Note
that $S_0^2=S_\infty^2=0$. Using Propositions~\ref{prop:pic1} and~\ref{prop:pic2} we obtain
\[
\Pic(S)=\langle S_0,F,\tilde{F}_1,\dots,\tilde{F}_4\mid 2 \tilde{F}_1=F,\dots, 2 \tilde{F}_4=F
\rangle\cong\ZZ^2\times(\ZZ/2)^3
\]
where $F$ is a generic fiber.
We want to study the connected double coverings where $D=0$; as for $H$ we can choose any $2$-torsion divisor.
We can consider two types of such divisors, namely $H_1=\tilde{F}_1-\tilde{F}_2$ or $H_2=\tilde{F}_1-\tilde{F}_2+\tilde{F}_3-\tilde{F}_4$. For $H_j$ the divisor $L_j^{(1)}=H_j$, i.e., if we want to compute the first Betti number
of those coverings, then we need $H^1(S,\cO_S(L_j^{(1)}))$. Note that these two divisors are numerically trivial and
the values of $\Delta$-invariant are $0$ for the singular points not in the support of $H_j$ and
$\frac{1}{4}$ for the other ones. Hence
\[
\chi(S,\cO_S(H_1))=1-4\frac{1}{4}=0,\qquad \chi(S,\cO_S(H_2))=1-8\frac{1}{4}=-1.
\]
Using Serre duality and the ideas of the previous sections, we note that $H^0(S,\cO_S(H_j))=H^2(S,\cO_S(H_j))=0$.
Hence, $h^1(S,\cO_S(H_1))=0$ and $h^1(S,\cO_S(H_2))=1$. Hence the double covering associated with the divisor $H_1$ has
first Betti number~$0$, as expected since it is again a ruled surface with 8 singular double points as~$S$.
The first Betti number of the double covering associated with the divisor $H_2$ is~$2$; in fact it is the product
of an elliptic curve and $\bp^1$.
\end{ejm}

\bibliographystyle{plain}


\begin{thebibliography}{10}

\bibitem{amo:jos}
E.~Artal, J.~Mart{\'i}n-Morales, and J.~Ortigas-Galindo.
\newblock Intersection theory on abelian-quotient {$V$}-surfaces and {$\bf
  Q$}-resolutions.
\newblock {\em J. Singul.}, 8:11--30, 2014.

\bibitem{Blache-RiemannRoch}
R.~Blache.
\newblock Riemann-{R}och theorem for normal surfaces and applications.
\newblock {\em Abh. Math. Sem. Univ. Hamburg}, 65:307--340, 1995.

\bibitem{Brenton-RiemannRoch}
L.~Brenton.
\newblock On the {R}iemann-{R}och equation for singular complex surfaces.
\newblock {\em Pacific J. Math.}, 71(2):299--312, 1977.

\bibitem{Chen-OrbifoldAdjunction}
W.~Chen.
\newblock Orbifold adjunction formula and symplectic cobordisms between lens
  spaces.
\newblock {\em Geom. Topol.}, 8:701--734, 2004.

\bibitem{ji-J-curvettes}
J.I. Cogolludo-Agust{\'{\i}}n and J.~{Mart{\'i}n-Morales}.
\newblock The space of curvettes of quotient singularities and associated
  invariants.
\newblock Preprint available at \texttt{arXiv:1503.02487v1 [math.AG]}, 2015.

\bibitem{Dolgachev-weighted}
I.~Dolgachev.
\newblock Weighted projective varieties.
\newblock In {\em Group actions and vector fields ({V}ancouver, {B}.{C}.,
  1981)}, volume 956 of {\em Lecture Notes in Math.}, pages 34--71. Springer,
  Berlin, 1982.

\bibitem{Esnault-Viehweg82}
H.~Esnault and E.~Viehweg.
\newblock Rev\^etements cycliques.
\newblock In {\em Algebraic threefolds ({V}arenna, 1981)}, volume 947 of {\em
  Lecture Notes in Math.}, pages 241--250. Springer, Berlin-New York, 1982.

\bibitem{Laufer-normal}
H.B. Laufer.
\newblock {\em Normal two-dimensional singularities}.
\newblock Princeton University Press, Princeton, N.J.; University of Tokyo
  Press, Tokyo, 1971.
\newblock Annals of Mathematics Studies, No. 71.

\bibitem{Sakai84}
F.~Sakai.
\newblock Weil divisors on normal surfaces.
\newblock {\em Duke Math. J.}, 51(4):877--887, 1984.

\bibitem{Sakai88}
F.~Sakai.
\newblock Ruled fibrations on normal surfaces.
\newblock {\em J. Math. Soc. Japan}, 40(2):249--269, 1988.

\end{thebibliography}
\end{document}